\documentclass[12pt, a4paper]{article}

\usepackage{amsmath}
\usepackage{amsthm,amssymb,mathrsfs,amsmath}
\usepackage{graphicx}
\usepackage{float}
\usepackage{color}
\usepackage[top=1in, bottom=1in, left=1in, right=1in]{geometry}

\bibliographystyle{plain}
\usepackage{subfigure}

\numberwithin{figure}{section}
\newtheorem{theorem}{Theorem}[subsection]
\newtheorem{lemma}{Lemma}[subsection]
\newtheorem{corollary}{Corollary}[subsection]
\newtheorem{proposition}{Proposition}[subsection]

\newtheorem{remark}{Remark}[subsection]
\newtheorem{definition}{Definition}[subsection]


\begin{document}

\title{Existence and nonexistence results of  radial solutions to singular BVPs arising in epitaxial growth theory}
\author{Amit Kumar Verma$^{a,\dagger}$, Biswajit Pandit$^b$\thanks{Email:$^a$akverma@iitp.ac.in,$^b$biswajitpandit82@gmail.com}
\\\small{\textit{$^{a,b}$Department of Mathematics, Indian Institute of Technology Patna, Patna--$801106$, Bihar, India.}}\\\small{\textit{$^\dagger$Dedicated to Late Prof. V. Lakshmikantham for his work on Monotone Iterative Technique. }}
}

\date{\today}

\maketitle

\begin{abstract}
The existence and nonexistence of stationary radial solutions to the elliptic partial differential equation arising in the molecular beam epitaxy are studied. The fourth-order radial equation is non-self adjoint and has no exact solutions. Also, it admits multiple solutions. Furthermore, solutions depend on the size of the parameter. We show that solutions exist for small positive values of this parameter. For large positive values of this parameter, we prove the nonexistence of solutions. We establish the qualitative properties of the solutions and provide bounds for the values of this parameter, which help us to separate the existence from nonexistence. We propose a new numerical scheme to capture the radial solutions. The results show that the iterative method is of better accuracy, more convenient and efficient for solving BVPs, which have multiple solutions. We verify theoretical results by numerical results. We also see the existence of solutions for negative values of the same parameter.
\end{abstract}
\textit{Keywords:} \small{Radial solutions, singular boundary value problems, non-self-adjoint operator, Green's function, reverse order lower solution, upper solution, iterative numerical approximations.}\\
\textit{AMS Subject Classification:} 65L10; 34B16

\section{Introduction}

 Epitaxy means the growth of a single thin film on top of a crystalline substrate. It is crucial for semiconductor thin-film technology, hard and soft coatings, protective coatings, optical coatings and etc. Epitaxial growth technique is used to produce the growth of semiconductor films and multilayer structures under high vacuum conditions (\cite{Barabasi:1995}). The major advantages of epitaxial growth are to reduces the growth time, better structural and superior electrical properties, eliminates the wastages caused during growth, wafering cost, cutting, polishing and etc. Several types of epitaxial growth techniques like the Hybrid vapor phase epitaxy (\cite{SO1997}), Chemical beam epitaxy (\cite{JSGJ1997}), Molecular beam epitaxy (MBE), etc have been used for the growth of compound semiconductors and other materials. In this work, we strictly focus on MBE, and we restrict our attention to the differential equation model, which is described by Carlos et. al. in \cite{Carlos2008,Carlos2012,CarlosOrigin2012,CarlosRadial2013}. In these references, the mathematical description of epitaxial growth is carried out by means of a function
\begin{equation}
\begin{aligned}\label{Eq 1}
\nonumber\sigma : \Omega \subset \mathbb{R}^{2}\times \mathbb{R}^{+} \rightarrow \mathbb{R},
\end{aligned}
\end{equation}
which describes the height of the growing interface in the spatial point $x \in \Omega \subset \mathbb{R}^{2} $ at time $t \in \mathbb{R}^{+}$. Authors (\cite{Carlos2008,Carlos2012,CarlosOrigin2012,CarlosRadial2013})  shown that the function $\sigma$   obeys the fourth order partial differential equation
\begin{equation}\label{P5Intro1}
\partial_t \sigma +\Delta^{2} \sigma~=~\text{det}(D^{2}\sigma)+\lambda \eta(x,t),~~~x\in \Omega \subset \mathbb{R}^{2},
\end{equation}
where $\eta(x,t)$ models the incoming mass entering the system through epitaxial deposition and $\lambda$ measures the intensity of this flux.
For simplicity they considered the stationary counterpart of the partial differential equation (\ref{P5Intro1}), which is given by 
\begin{equation}\label{P5Intro5}
\Delta^{2} \sigma~=~\text{det}(D^{2}\sigma)+\lambda G(x),~~~x\in \Omega \subset \mathbb{R}^{2},
\end{equation}
where they assumed that $\eta(x,t) \equiv G(x)$ is a stationary flux. Again, they set this problem on the unit disk and considered two types of boundary conditions. Corresponding to (\ref{P5Intro5}) homogeneous Dirichlet boundary condition (\cite{Carlos2012}) is
 \begin{equation}\label{P5Intro8}
 \displaystyle \sigma=0,~~~\frac{\partial \sigma}{\partial n}=0 ~~\mbox{on} ~~\partial \Omega,
 \end{equation}
where $n$ is unit out drawn normal to $\partial \Omega$, and homogeneous Navier boundary condition is 
\begin{equation}\label{P5Intro9}
\displaystyle \sigma=0,~~~\Delta \sigma=0~~\mbox{on} ~~\partial \Omega.
\end{equation} 
 By using the transformation  $r=|x|$ and $ \sigma(x)=\phi(|x|)$,  the above partial differential equation (\ref{P5Intro5}) is converted  into a fourth order ordinary differential equation which reads
\begin{equation}
  \begin{aligned}\label{P5Intro10}
&\frac{1}{r} \left \{ r\left[\frac{1}{r}(r \phi')^{'}\right]'\right \}^{'}=\frac{1}{r} \phi^{'} \phi^{''}+\lambda G(r),
 \end{aligned}
\end{equation}
where $\displaystyle '=\frac{d}{dr}$.

The  boundary conditions that correspond to (\ref{P5Intro8}) are 
\begin{eqnarray}\label{P5Eq205020}
\phi'(0)=0,~\phi(1)=0,~\phi'(1)=0,~\displaystyle \lim_{r\rightarrow0} r\phi'''(r)=0,
\end{eqnarray}
and the boundary conditions corresponding to (\ref{P5Intro9})
\begin{eqnarray}\label{P5Eq205021}
\phi'(0)=0,~\phi(1)=0,~\phi'(1)+\phi''(1)=0,~\displaystyle \lim_{r\rightarrow0} r\phi'''(r)=0.
\end{eqnarray}
Here, we impose another boundary conditions corresponding to (\ref{P5Intro9})
\begin{eqnarray}\label{P5Eq205022}
\phi'(0)=0,~\phi(1)=0,~\phi''(1)=0,~\displaystyle \lim_{r\rightarrow0} r\phi'''(r)=0.
\end{eqnarray}
The condition $\phi'(0)=0$  imposes the existence of an extremum at the origin. The conditions  $\phi(1)=0$ and $\phi'(1)=0$ are the actual boundary conditions. For simplicity we take  $G(r)=1$, which physically means that the new material is being deposited uniformly on the unit disc. Now, by using $\displaystyle \lim_{r\rightarrow0} r\phi'''(r)=0$, $ w=r \phi'$ and integrating by parts (\cite{Carlos2012}) from  equation (\ref{P5Intro10}), we have
\begin{equation}
\begin{aligned}\label{P5Eq1050}
&&&&r^{2} w''-r w'=\frac{1}{2} w^{2}+\frac{1}{2} \lambda r^{4}.
\end{aligned}
\end{equation}
By using the transformation $\displaystyle t=\frac{r^{2}}{2}$ and $u(t)=w(r)$, it is posible to reduce the equation (\ref{P5Eq1050}) into the following equation
 \begin{equation}\label{P1Eq1113}
 u''=\frac{u^{2}}{8t^{2}}+\frac{\lambda}{2},~\mbox{for}~t\in \left(0,\frac{1}{2}\right].
\end{equation}
Corresponding to (\ref{P1Eq1113}), we define the following three boundary value problems:
\begin{equation}
  \mbox{Problem 1:}~\left\{
    \begin{array}{@{} l c @{}}
      u''=\frac{u^{2}}{8t^{2}}+\frac{\lambda}{2},~\mbox{for}~t\in \left(0,\frac{1}{2}\right]  \\
      \lim_{t\rightarrow 0^{+}}\sqrt{t}u'(t)=0,~u(\frac{1}{2})=0,
    \end{array}\right.
  \label{P5problem1}
\end{equation} 
\begin{equation}
  \mbox{Problem 2:}~\left\{
    \begin{array}{@{} l c @{}}
      u''=\frac{u^{2}}{8t^{2}}+\frac{\lambda}{2},~\mbox{for}~t\in \left(0,\frac{1}{2}\right]  \\
      \lim_{t\rightarrow 0^{+}}\sqrt{t}u'(t)=0,~u'(\frac{1}{2})=0,
    \end{array}\right.
  \label{P5problem2}
\end{equation}
and 
\begin{equation}
  \mbox{Problem 3:}~\left\{
    \begin{array}{@{} l c @{}}
      u''=\frac{u^{2}}{8t^{2}}+\frac{\lambda}{2},~\mbox{for}~t\in \left(0,\frac{1}{2}\right]  \\
      \lim_{t\rightarrow 0^{+}}\sqrt{t}u'(t)=0,~u(\frac{1}{2})=u'(\frac{1}{2}).
    \end{array}\right.
  \label{P5problem3}
\end{equation}
 The BVPs (\ref{P5problem1}), (\ref{P5problem2}) and (\ref{P5problem3}) can equivalantly be described as the following integral equations (IE):
\begin{itemize}
\item IE corresponding to Problem $1$: 
\begin{equation}\label{P5problem1'}
u(t)=-\left[\left(\frac{1}{2}-t\right)\int_{0}^{t}\frac{u^{2}}{4s}ds+t\int_{t}^{\frac{1}{2}}\frac{u^{2}}{4s^{2}}\left(\frac{1}{2}-s\right)ds+\frac{\lambda}{4}t\left(\frac{1}{2}-t\right)\right],
\end{equation}
\item IE corresponding to Problem $2$:
\begin{equation}\label{P5problem2'}
u(t)=-\left[\int_{0}^{t}\frac{u^{2}}{8s}ds+t\int_{t}^{\frac{1}{2}}\frac{u^{2}}{8s^{2}}ds+\frac{\lambda}{4}t\left(1-t\right)\right],
\end{equation}
\item IE corresponding to Problem $3$:  
\begin{equation}\label{P5problem3'}
u(t)=-\left[\left(t+\frac{1}{2}\right)\int_{0}^{t}\frac{u^{2}}{4s}ds+t\int_{t}^{\frac{1}{2}}\frac{u^{2}}{4s^{2}}\left(s+\frac{1}{2}\right)ds+\frac{\lambda}{4}t\left(\frac{3}{2}-t\right)\right].
\end{equation}
\end{itemize}
We consider the function $u \in C^{2}_{loc}\left(\left(0,\frac{1}{2}\right];\mathbb{R}\right)$, where $C^{2}_{loc}\left(\left(0,\frac{1}{2}\right];\mathbb{R}\right)$ is defined as
$$C^{2}_{loc}\left(\left(0,\frac{1}{2}\right];\mathbb{R}\right)=\left\lbrace u:\left(0,\frac{1}{2}\right]\rightarrow \mathbb{R}|u\in C^{2}\left([a,b],\mathbb{R}\right) ~\mbox{for every compact set}~ [a,b]\subset \left(0,\frac{1}{2}\right]\right\rbrace.$$

In \cite{Carlos2012}, Carlos et. al. proved the existence and nonexistence of solutions of Problem $1$ and Problem $3$ by using upper and lower solution techniques. Corresponding to Problem $1$ and $3$, they have also provided the rigorous bounds of the values of the parameter $\lambda$, which helps us to separate the existence from nonexistence. We did not find any theoretical results corresponding to Problem $2$. Again, equation (\ref{P5Intro10}) is a nonlinear, singular, non-self-adjoint and has no exact solutions. Moreover, it admits multiple solutions. Therefore discrete methods such as finite element method etc may not be applicable to capture all solutions together. These facts highlight the difficulties to deal with such BVPs both analytically and numerically. Furthermore, to the best of our knowledge, there are only a few research papers that address both theoretical and numerical results corresponding to BVPs (\ref{P5problem1}), (\ref{P5problem2}) and (\ref{P5problem3}), and a lot of investigations are still pending. 

In this work, basically, we extend the theoretical results, which is described by Carlos et. al. in \cite{Carlos2012}. We prove some qualitative properties of the solutions and provide the rigorous bounds of the same parameter corresponding to different problems. To prove the existence of solutions, here we use the monotone iterative technique (\cite{PandeySBVP2008, Pandey2008, pandey2008existence, Pandey2010, PANDEY2010AMC, Cabada2007, Cabada2012, VL1985}). Recently, many researchers applied this technique on the initial value problem (IVP) for the nonlinear noninstantaneous impulsive differential equation (NIDE) (\cite{Ravi2017}), p-Laplacian boundary value problems with the right-handed Riemann-Liouville fractional derivative (\cite{Xue2019}), etc to prove the existence of the solution. Here, we also present numerical results to verify the theoretical results. We propose an iterative scheme to compute the approximate numerical solutions of the fourth-order differential equation (\ref{P5Intro10}) with $G(r) \equiv 1$ by using equations (\ref{P5problem1}), (\ref{P5problem2}), (\ref{P5problem3}) and it's respective Green's function. Recently, many authors have used numerical approximate methods like the Adomian decomposition method (ADM), homotopy perturbation method (HPM), etc to find approximate solution for different models involving differential equations (\cite{mital2008, odibat2007}), integral equation (\cite{KMMH1999, EBAD2005, SA2006}), fractional differential equations (\cite{lio2008, momani2008}) etc. After that, Waleed Al Hayani (\cite{Hayani2011}) and Singh et. al. (\cite{Singh2014}) applied ADM with Green's function to compute the approximate solution. They focused on the BVPs which have a unique solution. The major advantage of our proposed technique is to capture multiple solutions together with desired accuracy. 

The remainder of the paper has been focused on both theoretical and numerical results. We have proved some basic properties of the BVPs in section \ref{P5Basic}. The monotone iterative technique is presented in section \ref{P5Existence}, to prove the existence of a solution. A wide range of $\lambda$ of equation (\ref{P5Intro10}) corresponding to different types of boundary conditions are shown in section \ref{P5Estimation}. In section~\ref{P5ADM}, we apply our proposed technique on the integral equations and show a wide range of numerical results. Finally in section \ref{P5Conclusions}, we draw our main conclusions.
\section{Preliminary}\label{P5Basic} 
Corresponding to $\lambda\geq 0$, we prove some basic qualitative properties of the solution $u\in C^{2}_{loc}\left((0,\frac{1}{2}],\mathbb{R}\right)$, which satisfies the following inequality
\begin{equation}\label{P1Eq9}
u''\geq\frac{u^{2}}{8t^{2}}+\frac{\lambda}{2},~\mbox{for}~t\in \left(0,\frac{1}{2}\right].
\end{equation}
Here, we omit the proof of lemma \ref{P1Lemma1},  lemma \ref{P1Lemma2}, lemma \ref{P1Lemma3}, corollary \ref{P1Corollary1}, lemma \ref{P5lemma50} which has been done by Carlos et. al. in \cite{Carlos2012}.
\begin{lemma}\label{P1Lemma1}
Let $u\in C^{2}_{loc}\left((0,\frac{1}{2}],\mathbb{R}\right)$ satisfy $\displaystyle\lim_{t\rightarrow 0^{+}}\sqrt{t}u'(t)=0$ and equation (\ref{P1Eq9}), then $\displaystyle\lim_{t\rightarrow 0}u(t)=0$.
\end{lemma}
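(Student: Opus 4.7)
The plan is to exploit convexity from the inequality together with the growth bound $\sqrt{t}\,u'(t)\to 0$ to force $u$ to have a finite limit at $0^+$, and then to rule out any non-zero value for this limit by a contradiction with the same growth bound.

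First, I would observe that since $\lambda\geq 0$ and $u^{2}/(8t^{2})\geq 0$, the inequality gives $u''(t)\geq 0$ on $(0,1/2]$. Hence $u'$ is nondecreasing, and so $L:=\lim_{t\to 0^{+}}u'(t)$ exists in $[-\infty,u'(1/2)]$. If $L$ is finite, then $u'$ is bounded near the origin and $u$ extends continuously to $0$ by the fundamental theorem of calculus. If $L=-\infty$, the hypothesis $\sqrt{t}\,u'(t)\to 0$ is equivalent to $|u'(s)|=o(1/\sqrt{s})$ as $s\to 0^{+}$; since $1/\sqrt{s}$ is integrable at $0$, for any $\varepsilon>0$ and $t,t'$ sufficiently small we get
\[
|u(t)-u(t')|\le\int_{\min(t,t')}^{\max(t,t')}|u'(s)|\,ds\le 2\varepsilon\bigl(\sqrt{\max(t,t')}-\sqrt{\min(t,t')}\bigr),
\]
so $u$ is Cauchy as $t\to 0^{+}$. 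Either way, $c:=\lim_{t\to 0^{+}}u(t)$ exists and is finite.

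The main step is to show $c=0$. Suppose for contradiction $c\neq 0$. Then there exists $t_{0}\in(0,1/2]$ such that $u(t)^{2}\geq c^{2}/2$ for all $t\in(0,t_{0}]$, so the differential inequality yields $u''(t)\geq c^{2}/(16t^{2})$ on this interval. Integrating from $t$ to $t_{0}$ gives
\[
u'(t_{0})-u'(t)\;\ge\;\frac{c^{2}}{16}\!\left(\frac{1}{t}-\frac{1}{t_{0}}\right),
\]
which, after rearrangement and multiplication by $\sqrt{t}$, shows
\[
\sqrt{t}\,u'(t)\;\le\;\sqrt{t}\!\left(u'(t_{0})+\frac{c^{2}}{16 t_{0}}\right)-\frac{c^{2}}{16\sqrt{t}}.
\]
As $t\to 0^{+}$ the right-hand side tends to $-\infty$, contradicting $\sqrt{t}\,u'(t)\to 0$. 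Hence $c=0$, which is the desired conclusion.

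The only delicate step is handling the two cases for $L$ in order to produce the extension of $u$ to $0$; everything after that is a clean contradiction once the quantitative lower bound on $u''$ from $c\neq 0$ is extracted. The key observation making the argument work is that the hypothesis $\sqrt{t}\,u'(t)\to 0$ is precisely the borderline rate that is incompatible with the $1/t^{2}$ singularity produced by a non-vanishing limit $c$.
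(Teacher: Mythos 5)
Your proof is correct. Note that the paper does not give its own argument for this lemma --- it explicitly omits the proof and defers to \cite{Carlos2012} --- but your reasoning is the natural one and is complete: convexity (using $\lambda\geq 0$) gives monotonicity of $u'$, the hypothesis $\sqrt{t}\,u'(t)\to 0$ makes $|u'|$ integrable near the origin so that $c=\lim_{t\to 0^+}u(t)$ exists and is finite, and a nonzero $c$ would force $u''\gtrsim c^{2}/t^{2}$, hence $u'(t)\lesssim -c^{2}/(16t)$ near $0$, which is incompatible with $\sqrt{t}\,u'(t)\to 0$. The only cosmetic remark is that your case split on $L$ is unnecessary: the $|u'(s)|=o(1/\sqrt{s})$ estimate already shows $u$ is Cauchy at $0^{+}$ regardless of whether $L$ is finite, so the first paragraph could be collapsed into the second.
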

\begin{lemma}\label{P1Lemma2}
Let $u\in C^{2}_{loc}\left((0,\frac{1}{2}],\mathbb{R}\right)$ satisfy $\displaystyle\lim_{t\rightarrow 0}u(t)=0$, $u\left(\frac{1}{2}\right)=0$ and equation (\ref{P1Eq9}), then $u(t)\leq0$ for all $t\in \left(0,\frac{1}{2}\right]$.
\end{lemma}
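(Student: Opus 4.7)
The plan is to exploit convexity. Since $\lambda\geq 0$ and $u^{2}/(8t^{2})\geq 0$, the hypothesis immediately gives $u''(t)\geq 0$ on $(0,\tfrac12]$, so $u$ is convex on every compact subinterval of $(0,\tfrac12]$. Combined with the two "boundary values" $u(\tfrac12)=0$ and $\lim_{t\to 0^+}u(t)=0$, this should force $u\leq 0$ throughout.

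The steps I would take, in order, are: (i) record that $u''\geq 0$ on $(0,\tfrac12]$; (ii) fix an arbitrary $t\in(0,\tfrac12)$ and pick $a$ with $0<a<t$, so that $u$ is continuous and convex on the compact interval $[a,\tfrac12]$; (iii) apply the standard convexity inequality on $[a,\tfrac12]$, writing $t = \theta a + (1-\theta)\cdot\tfrac12$ with $\theta = \frac{1/2-t}{1/2-a}$, to obtain
\begin{equation*}
u(t)\;\leq\;\frac{1/2-t}{1/2-a}\,u(a)+\frac{t-a}{1/2-a}\,u\!\left(\tfrac12\right);
\end{equation*}
(iv) use $u(\tfrac12)=0$ and let $a\to 0^+$, invoking $\lim_{a\to 0^+}u(a)=0$ from Lemma \ref{P1Lemma1}'s hypothesis, to conclude $u(t)\leq (1-2t)\cdot 0 = 0$; finally (v) note $u(\tfrac12)=0\leq 0$ already handles the right endpoint.

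There is no real obstacle here; the argument is a one-line convexity estimate. The only small point of care is that $u$ need not be defined at $0$, only that it has limit $0$ there, which is why the inequality is first written on $[a,\tfrac12]$ for $a>0$ and then passed to the limit. One could alternatively extend $u$ continuously to $[0,\tfrac12]$ by setting $u(0):=0$ and apply the convexity inequality directly on $[0,\tfrac12]$, but staying inside $(0,\tfrac12]$ where $u''$ is known to exist is cleaner and avoids having to argue that convexity extends to the closed interval.
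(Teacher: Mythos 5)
Your convexity argument is correct: $u''\geq 0$ follows from (\ref{P1Eq9}) with $\lambda\geq 0$, and the chord inequality on $[a,\tfrac12]$ followed by $a\to 0^+$ does force $u(t)\leq 0$. The paper itself omits the proof of Lemma \ref{P1Lemma2} (deferring to Carlos et al.), but your route is the natural one and matches the convexity/monotonicity argument the paper does spell out for the analogous Lemma \ref{P1Lemma4}. One tiny slip: the fact $\lim_{a\to 0^+}u(a)=0$ is a direct hypothesis of Lemma \ref{P1Lemma2}, not something imported from Lemma \ref{P1Lemma1}.
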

\begin{lemma}\label{P1Lemma3}
Let $u\in C^{2}_{loc}\left((0,\frac{1}{2}],\mathbb{R}\right)$ satisfy $\displaystyle\lim_{t\rightarrow 0}u(t)=0$, $u\left(\frac{1}{2}\right)=u'\left(\frac{1}{2}\right)$ and equation (\ref{P1Eq9}), then $u(t)\leq0$ for all $t\in \left(0,\frac{1}{2}\right]$.
\end{lemma}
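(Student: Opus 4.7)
The plan is to exploit convexity. Since $\lambda \geq 0$ and $u^{2} \geq 0$, the inequality (\ref{P1Eq9}) yields $u''(t) \geq \frac{u^{2}(t)}{8t^{2}} + \frac{\lambda}{2} \geq 0$ on $(0, \tfrac{1}{2}]$; hence $u$ is convex on that interval and $u'$ is nondecreasing.

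The argument then splits into two clean steps. First I would show $u(\tfrac{1}{2}) \leq 0$; once that is in hand, the conclusion follows exactly as in Lemma \ref{P1Lemma2}. For the first step, suppose for contradiction that $u(\tfrac{1}{2}) > 0$. Because $u'$ is nondecreasing and the boundary condition reads $u'(\tfrac{1}{2}) = u(\tfrac{1}{2})$, we obtain $u'(s) \leq u(\tfrac{1}{2})$ for every $s \in (0, \tfrac{1}{2}]$. Integrating from $t$ to $\tfrac{1}{2}$ gives
\[
u\!\left(\tfrac{1}{2}\right) - u(t) \;\leq\; \left(\tfrac{1}{2} - t\right) u\!\left(\tfrac{1}{2}\right),
\]
i.e.\ $u(t) \geq \left(t + \tfrac{1}{2}\right) u(\tfrac{1}{2})$. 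Sending $t \to 0^{+}$ and invoking the hypothesis $\lim_{t\to 0} u(t) = 0$ forces $\tfrac{1}{2}\, u(\tfrac{1}{2}) \leq 0$, contradicting $u(\tfrac{1}{2}) > 0$.

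Once $u(\tfrac{1}{2}) \leq 0$, for each fixed $t \in (0, \tfrac{1}{2})$ and small $\varepsilon > 0$ convexity on the compact subinterval $[\varepsilon, \tfrac{1}{2}]$ yields the chord bound
\[
u(t) \;\leq\; \frac{\tfrac{1}{2} - t}{\tfrac{1}{2} - \varepsilon}\, u(\varepsilon) \;+\; \frac{t - \varepsilon}{\tfrac{1}{2} - \varepsilon}\, u\!\left(\tfrac{1}{2}\right);
\]
letting $\varepsilon \to 0^{+}$ and using $u(\varepsilon) \to 0$ gives $u(t) \leq 2t\, u(\tfrac{1}{2}) \leq 0$, which is the desired inequality.

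The main obstacle I expect is the first step: extracting nonpositivity of $u(\tfrac{1}{2})$ from the mixed boundary condition $u(\tfrac{1}{2}) = u'(\tfrac{1}{2})$, rather than from a direct value prescription as in Lemma \ref{P1Lemma2}. The unlocking idea is that convexity supplies monotonicity of $u'$, so that a hypothetical positive slope at the right endpoint propagates backwards into a uniform positive lower bound for $u$ on the entire half-open interval, colliding with the limit $u(0^{+}) = 0$.
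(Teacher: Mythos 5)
Your proof is correct. The paper itself omits the proof of this lemma (deferring to Carlos et al.\ \cite{Carlos2012}), but your argument is structurally identical to the paper's own proof of the analogous Lemma \ref{P1Lemma4}: first use convexity and the monotonicity of $u'$ to force $u\left(\tfrac{1}{2}\right)\leq 0$ from the endpoint condition, then conclude $u\leq 0$ on the whole interval from convexity together with $\lim_{t\to 0}u(t)=0$; your handling of the mixed condition $u\left(\tfrac{1}{2}\right)=u'\left(\tfrac{1}{2}\right)$ via the bound $u(t)\geq\left(t+\tfrac{1}{2}\right)u\left(\tfrac{1}{2}\right)$ is exactly the right adaptation.
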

\begin{corollary}\label{P1Corollary1}
Let $u\in C^{2}_{loc}\left((0,\frac{1}{2}],\mathbb{R}\right)$ satisfy $\displaystyle\lim_{t\rightarrow 0}u(t)=0$, $u(t)\leq 0$ and equation (\ref{P1Eq9}), then $\displaystyle\lim_{t\rightarrow 0^{+}}\sqrt{t}u'(t)=0$ if and only if $\displaystyle\lim_{t\rightarrow 0^{+}}\frac{u(t)}{\sqrt{t}}=0$.
\end{corollary}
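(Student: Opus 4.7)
The plan is to handle the two implications separately: the forward direction is a one-line application of L'H\^opital, while the reverse direction exploits the convexity that inequality~(\ref{P1Eq9}) supplies.

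For the forward direction, assume $\sqrt{t}\,u'(t)\to 0$. Since $u(t)\to 0$ and $\sqrt{t}\to 0$ as $t\to 0^+$, the ratio $u(t)/\sqrt{t}$ has the indeterminate form $0/0$, and L'H\^opital gives
\[
\lim_{t\to 0^+}\frac{u(t)}{\sqrt{t}}\;=\;\lim_{t\to 0^+}\frac{u'(t)}{1/(2\sqrt{t})}\;=\;\lim_{t\to 0^+} 2\sqrt{t}\,u'(t)\;=\;0.
\]

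For the reverse direction, note first that (\ref{P1Eq9}) together with $\lambda\geq 0$ and $u^{2}/(8t^{2})\geq 0$ forces $u''\geq 0$, so $u$ is convex on $(0,1/2]$ and $u'$ is nondecreasing. The standard chord-slope inequality
\[
u'(a)\;\leq\;\frac{u(b)-u(a)}{b-a}\;\leq\;u'(b),\qquad 0<a<b\leq \tfrac{1}{2},
\]
applied with the dyadic pairs $(a,b)=(t/2,t)$ and $(a,b)=(t,2t)$ (the latter valid for $t\leq 1/4$), sandwiches $u'(t)$ between two difference quotients. Multiplying these bounds by $\sqrt{t}$ and rewriting each difference quotient in terms of $u(s)/\sqrt{s}$ yields
\[
\frac{2\,u(t)}{\sqrt{t}}-\sqrt{2}\,\frac{u(t/2)}{\sqrt{t/2}}\;\leq\;\sqrt{t}\,u'(t)\;\leq\;\sqrt{2}\,\frac{u(2t)}{\sqrt{2t}}-\frac{u(t)}{\sqrt{t}}.
\]
Under the hypothesis $u(s)/\sqrt{s}\to 0$, every term on both sides tends to $0$ as $t\to 0^+$, and the squeeze theorem delivers $\sqrt{t}\,u'(t)\to 0$.

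The main obstacle is identifying the correct use of (\ref{P1Eq9}) in the reverse direction. Trying to integrate the differential inequality directly produces bounds on $u''$ that are awkward to convert into control of $\sqrt{t}\,u'(t)$; the clean route is instead to keep only the qualitative consequence $u''\geq 0$, extract convexity, and compare $u'(t)$ with difference quotients at dyadic scales chosen so that the resulting bounds reassemble into multiples of the assumed quantity $u(s)/\sqrt{s}$.
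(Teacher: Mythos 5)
Your proof is correct. Note that the paper itself gives no argument for this corollary --- it is explicitly omitted and deferred to the cited work of Carlos et al.\ \cite{Carlos2012} --- so there is no in-paper proof to compare against; your write-up supplies a complete, self-contained justification. The forward direction by L'H\^opital is valid (both $u$ and $\sqrt{t}$ tend to $0$, the derivative of $\sqrt{t}$ is nonzero near $0$, and the derivative quotient $2\sqrt{t}\,u'(t)$ has limit $0$ by hypothesis), and the reverse direction correctly isolates the only role of (\ref{P1Eq9}): together with the standing assumption $\lambda\geq 0$ of Section~\ref{P5Basic} it gives $u''\geq 0$, hence convexity, and the dyadic chord-slope sandwich
\[
\frac{2\,u(t)}{\sqrt{t}}-\sqrt{2}\,\frac{u(t/2)}{\sqrt{t/2}}\;\leq\;\sqrt{t}\,u'(t)\;\leq\;\sqrt{2}\,\frac{u(2t)}{\sqrt{2t}}-\frac{u(t)}{\sqrt{t}}
\]
is algebraically right and squeezes to $0$. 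This is essentially the same mechanism used in the source the paper cites (monotonicity of $u'$ compared against difference quotients at comparable scales). Two minor observations: the hypothesis $u(t)\leq 0$ is never needed in your argument, so you have in fact proved a slightly stronger statement; and you should state explicitly that $\lambda\geq 0$ is being invoked, since for $\lambda<0$ the inequality $u''\geq 0$ would not follow and the convexity step would collapse.
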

\begin{lemma}\label{P5lemma50}
Let $u\in C^{2}_{loc}\left((0,\frac{1}{2}],\mathbb{R}\right)$ satisfy $\displaystyle\lim_{t\rightarrow 0^{+}}\frac{u(t)}{\sqrt{t}}=0$. Then for every $\mu \in [0,1)$, we have 
\begin{equation}
\lim_{t \rightarrow 0^{+}} t^{1-\mu}\int_{t}^{\frac{1}{2}}\frac{u^{2}}{s^{2}} ds=0.
\end{equation} 
\end{lemma}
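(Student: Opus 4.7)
The plan is to exploit the hypothesis $\lim_{t\to 0^+} u(t)/\sqrt{t}=0$ to control $u^{2}(s)/s^{2}$ by $\epsilon^{2}/s$ near the origin, and then use that $t^{1-\mu}\log(1/t)\to 0$ for $\mu\in[0,1)$ to kill the resulting logarithmic blow-up. The estimate $t^{1-\mu}\cdot(\mbox{bounded term})\to 0$ then handles the part of the integral away from zero.

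More concretely, first I would fix $\epsilon>0$ and use the hypothesis to pick $\delta\in(0,1/2)$ such that $|u(s)|<\epsilon\sqrt{s}$ for all $s\in(0,\delta)$, which gives $u^{2}(s)/s^{2}<\epsilon^{2}/s$ on that interval. Next, for any $t\in(0,\delta)$ I would split
\begin{equation*}
\int_{t}^{1/2}\frac{u^{2}(s)}{s^{2}}\,ds \;=\; \int_{t}^{\delta}\frac{u^{2}(s)}{s^{2}}\,ds \;+\; \int_{\delta}^{1/2}\frac{u^{2}(s)}{s^{2}}\,ds.
\end{equation*}
The second piece is a finite constant $M_{\delta}$ because $u\in C^{2}_{loc}((0,1/2],\mathbb{R})$ is bounded on $[\delta,1/2]$. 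The first piece is bounded above by $\epsilon^{2}\int_{t}^{\delta}\frac{ds}{s}=\epsilon^{2}\log(\delta/t)$.

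Combining the two bounds yields
\begin{equation*}
0\leq t^{1-\mu}\int_{t}^{1/2}\frac{u^{2}(s)}{s^{2}}\,ds\leq \epsilon^{2}\,t^{1-\mu}\log(\delta/t)+M_{\delta}\,t^{1-\mu}.
\end{equation*}
Since $1-\mu>0$, both $t^{1-\mu}\log(\delta/t)\to 0$ and $t^{1-\mu}\to 0$ as $t\to 0^{+}$, so the limsup of the left-hand side is $0$. Because $\epsilon$ was arbitrary, the limit equals $0$.

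The only subtlety worth flagging is the order of quantifiers: $\epsilon$ must be fixed first so that $\delta$ (and hence $M_{\delta}$) is frozen before the $t\to 0^{+}$ limit is taken; otherwise the constant $M_{\delta}$ could blow up as $\delta\to 0$ and ruin the bound. This is a standard $\epsilon/\delta$ argument and I don't anticipate any real obstacle beyond bookkeeping.
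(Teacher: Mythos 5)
Your argument is correct: the $\epsilon$--$\delta$ splitting of the integral at $\delta$, the pointwise bound $u^{2}(s)/s^{2}<\epsilon^{2}/s$ near the origin coming from the hypothesis $\lim_{t\to 0^{+}}u(t)/\sqrt{t}=0$, and the elementary fact that $t^{1-\mu}\log(\delta/t)\to 0$ and $t^{1-\mu}\to 0$ for $\mu\in[0,1)$ close the proof with no gaps (indeed, for each fixed $\epsilon$ the whole upper bound already tends to $0$). The paper itself omits the proof of this lemma, deferring to the cited reference, where the argument is exactly this decomposition, so your route is essentially the canonical one.
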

\begin{lemma}\label{P1Lemma4}
Let $u\in C^{2}_{loc}\left((0,\frac{1}{2}],\mathbb{R}\right)$ satisfy $\displaystyle\lim_{t\rightarrow 0}u(t)=0$, $u'\left(\frac{1}{2}\right)=0$ and equation (\ref{P1Eq9}), then $u(t)\leq0$ for all $t\in \left(0,\frac{1}{2}\right]$.
\end{lemma}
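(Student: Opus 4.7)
The plan is to mimic the monotonicity argument used for the previous two lemmas, but this time exploit the Neumann-type endpoint condition $u'(1/2)=0$ instead of a Dirichlet-type one. The key observation is that since we are in the regime $\lambda \geq 0$, the right-hand side $u^{2}/(8t^{2}) + \lambda/2$ is pointwise nonnegative, so inequality (\ref{P1Eq9}) immediately gives convexity of $u$ on the whole interval $(0,1/2]$.

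First I would note that $u''(t) \geq 0$ for all $t \in (0,1/2]$, hence $u'$ is nondecreasing on this interval. Combining this with the boundary condition $u'(1/2)=0$, it follows that $u'(t) \leq 0$ for every $t \in (0,1/2]$, so $u$ is nonincreasing on $(0,1/2]$. Then, given any $t \in (0,1/2]$ and any $s$ with $0 < s < t$, monotonicity yields $u(s) \geq u(t)$; letting $s \to 0^{+}$ and using the hypothesis $\lim_{t\to 0} u(t) = 0$ gives $0 \geq u(t)$, which is the desired conclusion.

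There is essentially no obstacle here: unlike the Dirichlet case (Lemma \ref{P1Lemma2}), where one needs to rule out an interior positive maximum by a somewhat more delicate argument, the Neumann condition at the right endpoint together with convexity trivialises the sign of $u'$ on the whole interval. The only mild subtlety is that $u'$ is only defined on the open interval at the left end, but this does not affect the monotonicity argument since both the convexity and the limit at $0$ are already encoded in the hypotheses. The proof will therefore be quite short, and in fact shorter than the analogous arguments in Lemmas \ref{P1Lemma2} and \ref{P1Lemma3}.
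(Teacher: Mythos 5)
Your proof is correct and follows essentially the same route as the paper's: convexity of $u$ from (\ref{P1Eq9}) (using $\lambda\ge 0$) together with $u'\left(\frac{1}{2}\right)=0$ forces $u'(t)\le 0$ on $\left(0,\frac{1}{2}\right]$, hence $u$ is nonincreasing, and the limit condition at $0$ then gives $u(t)\le 0$. Your version is in fact slightly leaner: the paper first establishes $u\left(\frac{1}{2}\right)\le 0$ by a separate mean-value-theorem contradiction argument before running the same monotonicity step, a preliminary which your argument shows to be redundant.
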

\begin{proof}
First, we show  that $u\left(\frac{1}{2}\right)\leq 0$. Assume $u\left(\frac{1}{2}\right)> 0$. Since $\displaystyle\lim_{t\rightarrow 0}u(t)=0$, therefore we have there exist a $t_{0}\in \left(0,\frac{1}{2}\right]$ such that $u(t_{0})<u\left(\frac{1}{2}\right)$. Now from (\ref{P1Eq9}), we have $u'(t)$ is increasing function on  $\left(0,\frac{1}{2}\right]$. Again by mean value theorem, we have 
\begin{equation}
\frac{u\left(\frac{1}{2}\right)-u(t_{0})}{\frac{1}{2}-t_{0}}=u'(\xi),~\min\left\lbrace\frac{1}{2},t_{0}\right\rbrace\leq \xi \leq \max\left\lbrace\frac{1}{2},t_{0}\right\rbrace.
\end{equation}
Since $u'\left(\frac{1}{2}\right)=0$, therefore we have $\displaystyle \left(u\left(\frac{1}{2}\right)-u(t_{0})\right)\leq u'\left(\frac{1}{2}\right)\left(\frac{1}{2}-t_{0}\right)= 0.$ Hence we get $u\left(\frac{1}{2}\right)\leq u(t_{0})$, which is a contradiction. So, we have $u\left(\frac{1}{2}\right)\leq 0.$ Furthermore, $u(t)$ is a convex function along with $u'\left(\frac{1}{2}\right)=0$. Also $u'(t)$ is increasing, which implies $u'(t)\leq 0$. Again $u(t)$ is decreasing function on $\left(0,\frac{1}{2}\right]$. Therefore $\displaystyle\lim_{t\rightarrow 0}u(t)=0$ and $u\left(\frac{1}{2}\right)\leq 0$ leads to $u(t)\leq 0$ on $ \left(0,\frac{1}{2}\right]$.
\end{proof}

\begin{lemma}\label{P1lemma5}
Let $u\in C^{2}_{loc}\left((0,\frac{1}{2}],\mathbb{R}\right)$ be the solution of Problem $3$, then $u(t)$ satisfies the following integral equation
\begin{equation}\label{P1Eq11}
u(t)=-\left[\left(t+\frac{1}{2}\right)\int_{0}^{t}\frac{u^{2}}{4s}ds+t\int_{t}^{\frac{1}{2}}\frac{u^{2}}{4s^{2}}\left(s+\frac{1}{2}\right)ds+\frac{\lambda}{4}t\left(\frac{3}{2}-t\right)\right],
\end{equation}
and 
\begin{equation}\label{P1Eq12}
\lim_{t\rightarrow 0^{+}}\frac{|u(t)|}{t}<+\infty.
\end{equation}
\end{lemma}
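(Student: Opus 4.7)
My plan is to handle the two claims of the lemma separately. The integral equation~(\ref{P1Eq11}) will be obtained by integrating the ODE twice and matching the boundary data, while the bound~(\ref{P1Eq12}) will come from a two-step bootstrap through the equation.

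For~(\ref{P1Eq11}), I would abbreviate $f(s):=u^2(s)/(8s^2)+\lambda/2$ so the ODE reads $u''=f$. Integrating from $t$ to $\tfrac12$ and using the boundary condition $u'(\tfrac12)=u(\tfrac12)=:\alpha$ gives $u'(t)=\alpha-\int_t^{1/2}f(s)\,ds$; a further integration from $t$ to $\tfrac12$, together with a change in the order of integration, yields
\[
u(t)=\alpha\Bigl(\tfrac12+t\Bigr)+\int_t^{1/2}(s-t)f(s)\,ds.
\]
By Lemma~\ref{P1Lemma1}, $\lim_{t\to 0^+}u(t)=0$, and Corollary~\ref{P1Corollary1} together with Lemma~\ref{P1Lemma3} ensures $u^2/s\to 0$ at $0$, so $\int_0^{1/2}sf(s)\,ds$ is finite. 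Passing to the limit $t\to 0^+$ in the display therefore forces $\alpha=-2\int_0^{1/2}sf(s)\,ds$. Substituting $\alpha$ back, splitting $\int_0^{1/2}sf$ as $\int_0^t sf+\int_t^{1/2}sf$, collecting the $u^2$ and $\lambda$ contributions separately, and evaluating the elementary integral $\int_t^{1/2}(s+\tfrac12)\,ds=\tfrac12(1-(t+\tfrac12)^2)$ produces the factor $(\tfrac32-t)$ in the $\lambda$-term. Routine algebra then gives~(\ref{P1Eq11}).

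For~(\ref{P1Eq12}) I would bootstrap through the ODE. From $\sqrt{t}\,u'(t)\to 0$ I get $|u'(t)|\le 1/\sqrt{t}$ on some interval $(0,\delta]$, and then $|u(t)|=|\int_0^t u'|\le 2\sqrt{t}$ using $u(0^+)=0$. Feeding this into the ODE gives $u''(t)\le 1/(2t)+\lambda/2$, and integrating once yields $|u'(t)|=O(\log(1/t))$ and then $|u(t)|=O(t\log(1/t))$. A second pass through the ODE now gives $u''(t)=O(\log^2(1/t))+\lambda/2$, which is integrable on $(0,\delta]$ because $\int_0^\delta\log^2(1/s)\,ds<\infty$. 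Hence $u'(0^+)=u'(\delta)-\int_0^\delta u''(s)\,ds$ is finite, and L'H\^{o}pital's rule (applicable since $u(0^+)=0$) gives $u(t)/t\to u'(0^+)$, proving the desired finite limit.

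The main obstacle is the second part: the first-pass estimate $u''=O(1/t)$ is \emph{not} integrable near $0$, so a single iteration of the ODE alone does not bound $u'$. The bootstrap becomes effective precisely because the improved bound $|u|=O(t\log(1/t))$ upgrades $u^2/t^2$ to $O(\log^2(1/t))$, and $\log^2(1/t)$ is integrable near $0$. Part~1, by contrast, is mechanical once the Green-function structure of the BVP has been identified.
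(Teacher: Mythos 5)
Your proof is correct, and the two halves relate to the paper differently. For the integral equation your double integration of $u''=f$ from $t$ to $\tfrac12$, followed by determining $\alpha=u(\tfrac12)=u'(\tfrac12)$ from the condition $u(0^+)=0$, is just a hands-on derivation of the Green's-function representation that the paper writes down directly via the kernel $G(s,t)$ in (\ref{P1Eq13}); the algebra you describe does reproduce (\ref{P1Eq11}), and your justification that $\int_0^{1/2} s f(s)\,ds$ converges (via Lemma \ref{P1Lemma3}, Corollary \ref{P1Corollary1} and hence $u^2/s\to 0$) is exactly what is needed there. For the bound (\ref{P1Eq12}) you genuinely diverge from the paper: the paper divides (\ref{P1Eq11}) by $t$, reads off the candidate limit $\bigl|\int_0^{1/2}\tfrac{u^2}{4s^2}(s+\tfrac12)\,ds+\tfrac{3\lambda}{8}\bigr|$, and then asserts $\int_0^{1/2}u^2/s^2\,ds<\infty$ by a rather terse H\"older-type splitting $f=u^2/t$, $g=t^{-\mu}$, $h=t^{\mu-1}$. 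That splitting only uses $u^2/s\to 0$, which by itself does not force $\int_0^{1/2}u^2/s^2\,ds<\infty$ (take $u^2/s=1/\log(1/s)$). Your bootstrap, $|u'|\le t^{-1/2}\Rightarrow|u|\le 2\sqrt t\Rightarrow|u|=O(t\log(1/t))\Rightarrow u''-\lambda/2=O(\log^2(1/t))$, supplies precisely the quantitative decay that makes $u^2/s^2$ integrable and moreover shows $u'(0^+)$ exists, so $u(t)/t\to u'(0^+)$; this is more work than the paper's one line but it is self-contained and closes the integrability step that the paper leaves loose. The two answers agree, since differentiating (\ref{P1Eq11}) at $0$ identifies $u'(0^+)$ with minus the paper's limit value.
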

\begin{proof}
The Green's function of the Problem $3$ can be written as 
\begin{equation}
  G(s,t)=\left\{
    \begin{array}{@{} l c @{}}
      -2s\left(t+\frac{1}{2}\right)&0\leq s \leq t,  \\
      -2t\left(s+\frac{1}{2}\right)&t\leq s \leq \frac{1}{2}.
    \end{array}\right.
  \label{P1Eq13}
\end{equation}
Therefore from equation (\ref{P1Eq13}) and Problem $3$, we can easily deduce the integral equation (\ref{P1Eq11}). Now, by using the result of Lemma \ref{P1Lemma1}, we have
\begin{equation}\label{P1Eq1000}
\lim_{t\rightarrow0^{+}}\frac{|u(t)|}{t}=\left|\int_{0}^{\frac{1}{2}}\frac{u^{2}}{4s^{2}}\left(s+\frac{1}{2}\right)ds+\frac{3\lambda}{8}\right|.
\end{equation}
Now, put 
\begin{equation}
f(t)=\frac{u^{2}}{t},~g(t)=\frac{1}{t^\mu}~\mbox{and}~h(t)=\frac{1}{t^{1-\mu}}~~\mbox{for}~t\in\left(0,\frac{1}{2}\right].
\end{equation}
Therefore we get $fg\in L\left((0,\frac{1}{2}]\right)$ provided $\mu\in(0,1)$.  Consequently, we have 
\begin{equation}
\int_{0}^{\frac{1}{2}}\frac{u^{2}}{s^{2}}ds<\infty.
\end{equation}
Hence, from equation (\ref{P1Eq1000}), we get  the equation (\ref{P1Eq12}).
\end{proof}
\begin{lemma}\label{P1lemma6}
Let $u(t)\in C^{2}_{loc}\left((0,\frac{1}{2}],\mathbb{R}\right)$ be the solution of Problem $2$, then $u(t)$ can be written as in the following form
\begin{equation}\label{P51001}
u(t)=-\left[\int_{0}^{t}\frac{u^{2}}{8s}ds+t\int_{t}^{\frac{1}{2}}\frac{u^{2}}{8s^{2}}ds+\frac{\lambda}{4}t\left(1-t\right)\right],
\end{equation}
and also satisfies
\begin{equation}\label{P51002}
\lim_{t\rightarrow 0^{+}}\frac{|u(t)|}{t}<\infty.
\end{equation}
\end{lemma}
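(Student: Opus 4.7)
The strategy parallels the proof of Lemma \ref{P1lemma5}. I would first construct the Green's function associated with the linear operator $u \mapsto u''$ on $\left(0,\frac{1}{2}\right]$ under the homogeneous boundary conditions $u(0^+)=0$ (which follows from $\lim_{t\to 0^+}\sqrt{t}\,u'(t)=0$ via Lemma \ref{P1Lemma1}) and $u'\!\left(\frac{1}{2}\right)=0$. Taking the fundamental pair $\phi_1(s)=s$ (satisfying the left condition) and $\phi_2(s)=1$ (satisfying the right condition) and computing the Wronskian yields
$$G(s,t)=\begin{cases} -s, & 0\le s\le t,\\ -t, & t\le s\le \tfrac{1}{2}.\end{cases}$$
Plugging $f(s)=u^{2}(s)/(8s^{2})+\lambda/2$ into $u(t)=\int_{0}^{1/2}G(s,t)f(s)\,ds$, the $u^{2}$ part splits exactly as in (\ref{P51001}), while the $\lambda$ part collapses to $-\lambda t(1-t)/4$ after a short calculation. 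This establishes the integral equation.

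For the second claim, divide (\ref{P51001}) by $t$:
$$\frac{|u(t)|}{t}=\frac{1}{t}\int_{0}^{t}\frac{u^{2}}{8s}\,ds+\int_{t}^{1/2}\frac{u^{2}}{8s^{2}}\,ds+\frac{\lambda(1-t)}{4}.$$
Lemma \ref{P1Lemma1}, Lemma \ref{P1Lemma4}, and Corollary \ref{P1Corollary1} combine to give $u(t)/\sqrt{t}\to 0$, so $u^{2}(s)/s\to 0$ as $s\to 0^{+}$; hence the first term tends to $0$ by the mean value theorem for integrals. The third term tends to $\lambda/4$. For the middle term, I would reuse the Hölder-type weight decomposition of Lemma \ref{P1lemma5}, writing $u^{2}/s^{2}=(u^{2}/s^{1+\mu})\cdot(1/s^{1-\mu})$ for some $\mu\in(0,1)$, and combining with Lemma \ref{P5lemma50} to deduce $\int_{0}^{1/2}u^{2}/s^{2}\,ds<\infty$. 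Once that is in hand, the middle term has a finite limit, so $\lim_{t\to 0^{+}}|u(t)|/t$ is finite.

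The main obstacle is precisely the integrability statement $\int_{0}^{1/2}u^{2}/s^{2}\,ds<\infty$: the available asymptotic $u(t)=o(\sqrt{t})$ is not strong enough by itself, and one must invoke the weighted decomposition exactly as in the proof of Lemma \ref{P1lemma5}, together with Lemma \ref{P5lemma50}, to upgrade it to integrability at the singularity. The Green's function derivation and the routing through Lemma \ref{P1Lemma4} (rather than Lemma \ref{P1Lemma3}) are the only real departures from the Problem 3 argument.
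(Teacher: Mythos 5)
Your proposal follows essentially the same route as the paper: the same Green's function $G(s,t)=-s$ for $s\le t$ and $-t$ for $s\ge t$, the same passage to the integral equation, and the same reduction of $\lim_{t\to 0^+}|u(t)|/t$ to the integrability of $u^{2}/s^{2}$ near the origin via the weighted decomposition borrowed from Lemma \ref{P1lemma5}. The only difference is that you spell out the chain (Lemma \ref{P1Lemma4} plus Corollary \ref{P1Corollary1}) giving $u(t)/\sqrt{t}\to 0$, which the paper leaves implicit; this is a faithful, slightly more explicit version of the same argument.
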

\begin{proof}
By using the boundary condition and properties of  Green's function, we have
\begin{equation}
  G(s,t)=\left\{
    \begin{array}{@{} l c @{}}
      -s&0\leq s \leq t,  \\
      -t&t\leq s \leq \frac{1}{2}.
    \end{array}\right.
  \label{P1Eq17}
\end{equation}
Similarly, from equation (\ref{P1Eq17}) and Problem $2$, we can easily derive the equation (\ref{P51001}). Now, by using the result of Lemma \ref{P1Lemma1}, we have
\begin{equation}\label{P5Eq1000}
\lim_{t\rightarrow0^{+}}\frac{|u(t)|}{t}=\left|\int_{0}^{\frac{1}{2}}\frac{u^{2}}{8s^{2}}ds+\frac{\lambda}{4}\right|.
\end{equation}
 Therefore, from equations (\ref{P5Eq1000}) and by similar analysis as in Lemma \ref{P1lemma5}, we can prove the result (\ref{P51002}).
\end{proof}
\begin{lemma}\label{P1lemma7}
Let $u(t)\in C^{2}_{loc}\left((0,\frac{1}{2}],\mathbb{R}\right)$ be the solution of Problem $1$, then $u(t)$ can be written as in the following form
\begin{equation}\label{P5Eq11000}
u(t)=-\left[\left(\frac{1}{2}-t\right)\int_{0}^{t}\frac{u^{2}}{4s}ds+t\int_{t}^{\frac{1}{2}}\frac{u^{2}}{4s^{2}}\left(\frac{1}{2}-s\right)ds+\frac{\lambda}{4}t\left(\frac{1}{2}-t\right)\right],
\end{equation}
and satisfies
\begin{equation}\label{P5Eq12000}
\lim_{t\rightarrow 0^{+}}\frac{|u(t)|}{t}<\infty.
\end{equation}
\end{lemma}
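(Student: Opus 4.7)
The plan is to mirror the derivations already used for Lemma \ref{P1lemma5} and Lemma \ref{P1lemma6}, adapted to the boundary conditions of Problem~$1$, namely $\lim_{t\to 0^+}\sqrt{t}\,u'(t)=0$ and $u(1/2)=0$. By Lemma \ref{P1Lemma1} the first condition is equivalent (for solutions of the ODE) to the Dirichlet-type condition $\lim_{t\to 0}u(t)=0$, so effectively I will treat Problem~$1$ as a two-point Dirichlet problem on $(0,1/2]$.

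First, I would construct the Green's function $G(s,t)$ for the linear operator $u''=f$ subject to $u(0)=u(1/2)=0$. Solving the standard way (piecewise linear in $s$, continuous at $s=t$, with jump $+1$ in the $s$-derivative, and vanishing at both endpoints) gives
\begin{equation*}
G(s,t)=\begin{cases} -2s\bigl(\tfrac{1}{2}-t\bigr), & 0\le s\le t,\\[2pt] -2t\bigl(\tfrac{1}{2}-s\bigr), & t\le s\le \tfrac{1}{2}.\end{cases}
\end{equation*}
Substituting $u''(s)=u(s)^2/(8s^2)+\lambda/2$ into $u(t)=\int_{0}^{1/2} G(s,t)\,u''(s)\,ds$ and separating the pieces at $s=t$ produces the three expected terms. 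The constant forcing integrates explicitly, and after collecting it I expect the $\lambda$ contribution to simplify to $-\tfrac{\lambda}{4}t(\tfrac{1}{2}-t)$, reproducing exactly the form of equation (\ref{P5Eq11000}).

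For the second claim (\ref{P5Eq12000}), I would divide the integral equation (\ref{P5Eq11000}) by $t$ and take $t\to 0^+$, exactly as in Lemma \ref{P1lemma5}. This gives formally
\begin{equation*}
\lim_{t\to 0^+}\frac{|u(t)|}{t}=\left|\,\int_{0}^{1/2}\frac{u^{2}}{4s^{2}}\Bigl(\tfrac{1}{2}-s\Bigr)\,ds+\frac{\lambda}{8}\,\right|,
\end{equation*}
after noting that the factor $(1/2-t)/t\cdot\int_0^t u^2/(4s)\,ds$ vanishes since $u^2/s\to 0$ (which follows from Corollary \ref{P1Corollary1} combined with Lemmas \ref{P1Lemma2} and \ref{P1Lemma1}). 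It then remains to show that the improper integral $\int_0^{1/2} u^2/s^2\,ds$ converges; this is handled exactly as in Lemma \ref{P1lemma5} by setting $f(s)=u^2/s$, $g(s)=1/s^\mu$, $h(s)=1/s^{1-\mu}$ for $\mu\in(0,1)$ and invoking Lemma \ref{P5lemma50} together with Hölder's inequality on $(0,1/2]$.

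The Green's function derivation and the $\lambda$ bookkeeping are routine, so the main obstacle is only cosmetic: the factor $(1/2-s)$ in the second integral, which is absent in Lemma \ref{P1lemma6}. However, since $(1/2-s)$ is bounded by $1/2$ on the interval of integration, it does not affect integrability, and the argument of Lemma \ref{P1lemma5} transfers verbatim. Hence both (\ref{P5Eq11000}) and (\ref{P5Eq12000}) follow by essentially the same two-step template already established in the preceding two lemmas.
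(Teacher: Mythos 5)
Your proposal is correct and follows essentially the same route as the paper: construct the Green's function for the Dirichlet-type problem, read off the integral equation, then divide by $t$ and establish convergence of $\int_0^{1/2} u^2/s^2\,ds$ via the same $f,g,h$ splitting with Lemma \ref{P5lemma50} used in Lemmas \ref{P1lemma5} and \ref{P1lemma6}. The extra remarks about the bounded factor $\left(\tfrac{1}{2}-s\right)$ and the vanishing of the first term only make explicit what the paper leaves implicit.
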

\begin{proof}
The Green's function of the Problem $1$  is given by
\begin{equation}
  G(s,t)=\left\{
    \begin{array}{@{} l c @{}}
      -2s\left(\frac{1}{2}-t\right)&0\leq s \leq t,  \\
      -2t\left(\frac{1}{2}-s\right)&t\leq s \leq \frac{1}{2}.
    \end{array}\right.
  \label{P1Eq20}
\end{equation}
Again, from equation (\ref{P1Eq20}) and Problem $3$, we derive integral equation (\ref{P5Eq11000}). Furthermore, by using the result of Lemma \ref{P1Lemma1}, we have
\begin{equation}\label{P5Eq10000}
\lim_{t\rightarrow0^{+}}\frac{|u(t)|}{t}=\left|\int_{0}^{\frac{1}{2}}\frac{u^{2}}{4s^{2}}\left(\frac{1}{2}-s\right)ds+\frac{\lambda}{8}\right|.
\end{equation}
Again, by similar analysis as in Lemma \ref{P1lemma6}, we get the inequality (\ref{P5Eq12000}).
\end{proof}
\begin{remark}
From Lemma \ref{P1lemma7} (respectively Lemma \ref{P1lemma6} and Lemma \ref{P1lemma5}), we can easily justify the result of Lemma \ref{P1Lemma2} (respectively Lemma \ref{P1Lemma4} and Lemma \ref{P1Lemma3}). 
\end{remark}
\section{Existence of solutions}\label{P5Existence}
In this section, we apply the monotone lower and upper solution technique to prove the existence of at least one solution of Problem $1$, Problem $2$ and Problem $3$. For this purpose, we need to prove some lemmas, which help us to proof the main theorems.
\subsection{Construction of Green's function}
To investigate the Problem $1$, Problem $2$ and Problem $3$, we consider the corresponding nonlinear singular boundary value problems, which are  given by
\begin{equation}
  \mbox{Problem $1(a)$:}~\left\{
    \begin{array}{@{} l c @{}}
      u''+ku=h(t),~\mbox{for}~t\in \left(0,\frac{1}{2}\right],  \\
      \lim_{t\rightarrow 0^{+}}\sqrt{t}u'(t)=0,~u\left(\frac{1}{2}\right)=0,
    \end{array}\right.
  \label{P1problem1}
\end{equation} 
\begin{equation}
  \mbox{Problem $2(a)$:}~\left\{
    \begin{array}{@{} l c @{}}
      u''+ku=h(t),~\mbox{for}~t\in \left(0,\frac{1}{2}\right],  \\
      \lim_{t\rightarrow 0^{+}}\sqrt{t}u'(t)=0,~u'\left(\frac{1}{2}\right)=0,
    \end{array}\right.
  \label{P1problem2}
\end{equation}
and 
\begin{equation}
  \mbox{Problem $3(a)$:}~\left\{
    \begin{array}{@{} l c @{}}
      u''+ku=h(t),~\mbox{for}~t\in \left(0,\frac{1}{2}\right],  \\
      \lim_{t\rightarrow 0^{+}}\sqrt{t}u'(t)=0,~u\left(\frac{1}{2}\right)=u'\left(\frac{1}{2}\right),
    \end{array}\right.
  \label{P1problem3}
\end{equation}
where $\displaystyle h(t)=\frac{u^{2}}{8t^{2}}+\frac{\lambda}{2}+ku$, $k\in \mathbb{R}$ and $\lambda\in \mathbb{R}$.

\begin{lemma}\label{P1Lemma8}
Let $k<0$ and $u(t)\in C^{2}_{loc}\left((0,\frac{1}{2}],\mathbb{R}\right)$ be the solution of Problem $1(a)$, then 
\begin{equation}\label{P10002}
u(t)=\int_{0}^{\frac{1}{2}}G(s,t)h(s)ds,
\end{equation}
where Green's function $G(s,t)$ is given by
\begin{equation}
  G(s,t)=\left\{
    \begin{array}{@{} l c @{}}
      -\frac{\sinh\left(\frac{\sqrt{|k|}}{2}-\sqrt{|k|}t\right)\sinh\left(\sqrt{|k|}s\right)}{\sqrt{|k|}\sinh\left(\frac{\sqrt{|k|}}{2}\right)}&0\leq s \leq t,  \\
      -\frac{\sinh\left(\frac{\sqrt{|k|}}{2}-\sqrt{|k|}s\right)\sinh\left(\sqrt{|k|}t\right)}{\sqrt{|k|}\sinh\left(\frac{\sqrt{|k|}}{2}\right)}&t\leq s \leq \frac{1}{2},
    \end{array}\right.
  \label{P1Eq25}
\end{equation}
and $G(s,t)\leq 0$ for all $0\leq s\leq \frac{1}{2}$ and $0\leq  t\leq \frac{1}{2}$.
\end{lemma}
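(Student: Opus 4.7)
The plan is to invoke the standard Sturm--Liouville recipe for the self-adjoint operator $Lu=u''+ku$ on $(0,\tfrac12]$. Setting $k=-\alpha^{2}$ with $\alpha=\sqrt{|k|}>0$, the homogeneous equation $u''-\alpha^{2}u=0$ is spanned by $\cosh(\alpha t)$ and $\sinh(\alpha t)$. First I would pick $u_{1}(t)=\sinh(\alpha t)$, which satisfies $u_{1}(0)=0$ (and hence trivially $\sqrt{t}\,u_{1}'(t)\to 0$), and $u_{2}(t)=\sinh(\tfrac{\alpha}{2}-\alpha t)$, which satisfies $u_{2}(\tfrac12)=0$. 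A short computation using $\sinh(a+b)=\sinh a\cosh b+\cosh a\sinh b$ then gives the (constant) Wronskian $W=u_{1}u_{2}'-u_{1}'u_{2}=-\alpha\sinh(\tfrac{\alpha}{2})$.

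Next I would assemble $G(s,t)=u_{1}(s)u_{2}(t)/W$ for $s\le t$ and $G(s,t)=u_{1}(t)u_{2}(s)/W$ for $s\ge t$, which reproduces the piecewise formula (\ref{P1Eq25}). To verify that $v(t):=\int_{0}^{1/2}G(s,t)h(s)\,ds$ solves $v''+kv=h$, I would split the integral at $s=t$, differentiate twice under the integral sign, and use continuity of $G$ together with the identity $\cosh(\tfrac{\alpha}{2}-\alpha t)\sinh(\alpha t)+\cosh(\alpha t)\sinh(\tfrac{\alpha}{2}-\alpha t)=\sinh(\tfrac{\alpha}{2})$ to produce the delta-function contribution. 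The boundary values $v(\tfrac12)=0$ and $v(0)=0$ fall out from $u_{2}(\tfrac12)=0$ and $u_{1}(0)=0$ respectively, provided $h$ is integrable near $0$. For the identification $u=v$, I would observe that $w:=u-v$ satisfies the homogeneous equation with $w(\tfrac12)=0$; since $u$ solves Problem~$1(a)$, Lemma~\ref{P1Lemma1} supplies $u(0)=0$, hence $w(0)=0$, and then $w\equiv 0$ because no nontrivial combination of $\cosh(\alpha t)$ and $\sinh(\alpha t)$ vanishes at both endpoints.

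The sign claim is the easiest step: for $s,t\in[0,\tfrac12]$ each of the arguments $\alpha s$, $\alpha t$, $\tfrac{\alpha}{2}-\alpha s$, $\tfrac{\alpha}{2}-\alpha t$ lies in $[0,\tfrac{\alpha}{2}]$, where $\sinh$ is nonnegative; together with $\alpha\sinh(\tfrac{\alpha}{2})>0$, the explicit minus sign in front of each piece of $G$ forces $G(s,t)\le 0$ throughout $[0,\tfrac12]^{2}$.

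The step I expect to take the most care is the uniqueness/identification $u=v$: the stated left boundary condition $\sqrt{t}\,u'(t)\to 0$ is automatically satisfied by every classical solution of the homogeneous equation, so it does not by itself pin down the constant of integration near $0$. The missing information is $u(0)=0$, which is not listed explicitly in the hypothesis but must be imported from Lemma~\ref{P1Lemma1}; one must also check that $h$ is integrable near $0$, which follows from the regularity of any genuine solution of Problem~$1(a)$ (which coincides with Problem~$1$). Once these points are settled, the remainder of the argument is routine manipulation of hyperbolic identities.
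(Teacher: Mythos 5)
Your proposal is correct and follows essentially the same route as the paper, whose proof of this lemma is a one-line assertion that the formula and the sign condition follow from "the boundary condition of Problem $1(a)$ and properties of Green's function"; you have simply carried out the standard Sturm--Liouville construction (homogeneous solutions $\sinh(\sqrt{|k|}\,t)$ and $\sinh(\tfrac{\sqrt{|k|}}{2}-\sqrt{|k|}\,t)$, Wronskian $-\sqrt{|k|}\sinh(\tfrac{\sqrt{|k|}}{2})$, sign from nonnegativity of $\sinh$ on $[0,\tfrac{\sqrt{|k|}}{2}]$) that the paper leaves implicit. Your observation that the left boundary condition $\sqrt{t}\,u'(t)\to 0$ alone does not determine the solution, so that $u(0)=0$ must be imported (e.g.\ via Lemma \ref{P1Lemma1}) to identify $u$ with the integral representation, is a genuine point of care that the paper's proof does not address.
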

\begin{proof}
By using the boundary condition of Problem $1(a)$ and properties of Green's function, we can easily prove the equation (\ref{P1Eq25}). Furthermore we have $G(s,t)\leq 0$ for all $0\leq s\leq \frac{1}{2}$ and $0\leq  t\leq \frac{1}{2}$.
\end{proof}
\begin{lemma}\label{P1Lemma9}
Let $k<0$ and $u(t)\in C^{2}_{loc}\left((0,\frac{1}{2}],\mathbb{R}\right)$ be the solution of Problem $2(a)$, then 
\begin{equation}
u(t)=\int_{0}^{\frac{1}{2}}G(s,t)h(s)ds,
\end{equation}
where Green's function $G(s,t)$ is given by
\begin{equation}
  G(s,t)=\left\{
    \begin{array}{@{} l c @{}}
      -\frac{\cosh\left(\frac{\sqrt{|k|}}{2}-\sqrt{|k|}t\right)\sinh\left(\sqrt{|k|}s\right)}{\sqrt{|k|}\cosh\left(\frac{\sqrt{|k|}}{2}\right)}&0\leq s \leq t,  \\
      -\frac{\cosh\left(\frac{\sqrt{|k|}}{2}-\sqrt{|k|}s\right)\sinh\left(\sqrt{|k|}t\right)}{\sqrt{|k|}\cosh\left(\frac{\sqrt{|k|}}{2}\right)}&t\leq s \leq \frac{1}{2},
    \end{array}\right.
  \label{P1Eq27}
\end{equation}
and $G(s,t)\leq 0$ for all $0\leq s\leq \frac{1}{2}$ and $0\leq  t\leq \frac{1}{2}$.
\end{lemma}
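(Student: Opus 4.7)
The plan is to mimic the construction already sketched for Problem $1(a)$ in Lemma \ref{P1Lemma8}, but with the right endpoint condition changed from $u(1/2)=0$ to $u'(1/2)=0$; this only affects the choice of the ``right'' fundamental solution and consequently the Wronskian. Since $k<0$, writing $k=-|k|$ the homogeneous equation $u''-|k|u=0$ has the two linearly independent solutions $\cosh(\sqrt{|k|}\,t)$ and $\sinh(\sqrt{|k|}\,t)$. I would first select $\phi_{1}$ compatible with the left boundary behaviour and $\phi_{2}$ satisfying the right boundary condition, compute the Wronskian, and then assemble the Green's function by the standard recipe $G(s,t)=\phi_{1}(\min\{s,t\})\,\phi_{2}(\max\{s,t\})/W$.

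For the right endpoint, I take $\phi_{2}(t)=\cosh\!\left(\tfrac{\sqrt{|k|}}{2}-\sqrt{|k|}\,t\right)$, since $\phi_{2}'(1/2)=-\sqrt{|k|}\sinh(0)=0$. For the left endpoint I take $\phi_{1}(t)=\sinh(\sqrt{|k|}\,t)$; this is the delicate choice and it is where I expect the only real subtlety to lie, because both $\sinh$ and $\cosh$ formally satisfy the weak condition $\lim_{t\to 0^{+}}\sqrt{t}\,u'(t)=0$. I would justify picking $\sinh$ by invoking Lemma \ref{P1Lemma1}, which forces any admissible $u$ to satisfy $\lim_{t\to 0^{+}}u(t)=0$, so that $\phi_{1}(0)=0$ is required; equivalently, $\sinh(\sqrt{|k|}\,s)$ is the factor that makes the kernel vanish linearly in $s$ at the origin, which is exactly what is needed for the singular integral against $h(s)=u^{2}/(8s^{2})+\lambda/2+ku$ to converge (using $|u(s)|/s$ bounded, as in Lemma \ref{P1lemma6}).

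With $\phi_{1},\phi_{2}$ chosen, the Wronskian $W=\phi_{1}\phi_{2}'-\phi_{1}'\phi_{2}$ is constant in $t$; evaluating at $t=1/2$ gives $W=-\sqrt{|k|}\cosh(\sqrt{|k|}/2)$. Dividing the product $\phi_{1}(\min)\,\phi_{2}(\max)$ by $W$ and simplifying reproduces the two-branch formula (\ref{P1Eq27}). I would then verify the defining properties in one line each: (i) on both branches $G(s,\cdot)$ satisfies $\partial_{t}^{2}G+kG=0$ since $\phi_{1},\phi_{2}$ do; (ii) $G$ is continuous across $s=t$; (iii) the jump $[\partial_{t}G]_{s=t^{-}}^{s=t^{+}}=1$ produces the Dirac source, so that $u(t)=\int_{0}^{1/2}G(s,t)h(s)\,ds$ satisfies $u''+ku=h$; (iv) the boundary behaviour of $G$ is inherited from $\phi_{1},\phi_{2}$.

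The sign assertion is routine: for $s,t\in[0,1/2]$ we have $\sinh(\sqrt{|k|}\,s)\ge 0$, and the argument of $\cosh(\sqrt{|k|}/2-\sqrt{|k|}\,t)$ lies in $[0,\sqrt{|k|}/2]$, so this factor is strictly positive; similarly $\cosh(\sqrt{|k|}/2-\sqrt{|k|}\,s)>0$ for $s\in[0,1/2]$ and $\sinh(\sqrt{|k|}\,t)\ge 0$. Hence both branches of $G(s,t)$ are nonpositive (since $W<0$ is cancelled by the explicit minus sign). In short, the whole argument is a standard Green's-function construction; the only place worth pausing is the identification of $\phi_{1}$, which I would pin down by appealing to Lemma \ref{P1Lemma1}.
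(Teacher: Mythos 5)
Your construction is correct and is exactly the argument the paper leaves implicit: the paper's proof of this lemma just says "in a similar manner as in Lemma \ref{P1Lemma8}," i.e.\ the standard two-fundamental-solutions/Wronskian recipe with $\phi_1=\sinh(\sqrt{|k|}\,t)$ forced by $\lim_{t\to0}u(t)=0$ (equivalently, by integrability of $G(s,t)h(s)$ near $s=0$) and $\phi_2=\cosh\bigl(\tfrac{\sqrt{|k|}}{2}-\sqrt{|k|}\,t\bigr)$ forced by $u'(1/2)=0$, giving $W=-\sqrt{|k|}\cosh(\sqrt{|k|}/2)$ and the stated kernel. Your Wronskian, the resulting two-branch formula, and the nonpositivity argument all check out, so the proposal matches the paper's (unwritten) proof.
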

\begin{proof}
In a similar manner as in Lemma \ref{P1Lemma8}, we can easily get the equation (\ref{P1Eq27}), and prove $G(s,t)\leq 0$ for all $0\leq s\leq \frac{1}{2}$ and $0\leq  t\leq \frac{1}{2}$.
\end{proof}
\begin{lemma}\label{P1Lemma10}
Let $k<0$,
$\sqrt{|k|}\cosh\left(\frac{\sqrt{|k|}}{2}\right)-\sinh\left(\frac{\sqrt{|k|}}{2}\right)>0$ 
and $u(t)\in C^{2}_{loc}\left((0,\frac{1}{2}],\mathbb{R}\right)$ be the solution of Problem $3(a)$, then 
\begin{equation}
u(t)=\int_{0}^{\frac{1}{2}}G(s,t)h(s)ds,
\end{equation}
where Green's function $G(s,t)$ is given by
\begin{equation}
  G(s,t)=\left\{
    \begin{array}{@{} l c @{}}
      -\frac{\left[\sqrt{|k|}\cosh\left(\frac{\sqrt{|k|}}{2}-\sqrt{|k|}t\right)-\sinh\left(\frac{\sqrt{|k|}}{2}-\sqrt{|k|}t\right)\right]  \sinh\left(\sqrt{|k|}s\right)}{\sqrt{|k|}\left[\sqrt{|k|}\cosh\left(\frac{\sqrt{|k|}}{2}\right)-\sinh\left(\frac{\sqrt{|k|}}{2}\right)\right]}&0\leq s \leq t,  \\
      -\frac{\left[\sqrt{|k|}\cosh\left(\frac{\sqrt{|k|}}{2}-\sqrt{|k|}s\right)-\sinh\left(\frac{\sqrt{|k|}}{2}-\sqrt{|k|}s\right)\right]  \sinh\left(\sqrt{|k|}t\right)}{\sqrt{|k|}\left[\sqrt{|k|}\cosh\left(\frac{\sqrt{|k|}}{2}\right)-\sinh\left(\frac{\sqrt{|k|}}{2}\right)\right]}&t\leq s \leq \frac{1}{2},
    \end{array}\right.
  \label{P1Eq29}
\end{equation}
and  $G(s,t)\leq 0$ for all $0\leq s\leq \frac{1}{2}$ and $0\leq  t\leq \frac{1}{2}$.
\end{lemma}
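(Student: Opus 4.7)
The plan is to follow the standard variation-of-parameters construction for a linear second-order boundary value problem, in direct parallel with Lemma \ref{P1Lemma8} and Lemma \ref{P1Lemma9}, and then to verify the sign condition on $G$ using the stated positivity hypothesis.

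First I would identify two fundamental solutions of the homogeneous equation $u''+ku=0$ (with $k<0$) adapted to the two boundary conditions. For the left condition I pick $u_1(t)=\sinh(\sqrt{|k|}\,t)$, the solution vanishing at the origin; this selection is consistent with Lemma \ref{P1Lemma1}, which forces $u(0)=0$, and mirrors the choice made in the two previous lemmas. For the right condition $u(\tfrac12)=u'(\tfrac12)$ I search for a solution of the form $u_2(t)=A\cosh(\sqrt{|k|}(\tfrac12-t))+B\sinh(\sqrt{|k|}(\tfrac12-t))$. The matching condition yields $A=-\sqrt{|k|}\,B$, so up to a constant I take $u_2(t)=\sqrt{|k|}\cosh(\sqrt{|k|}(\tfrac12-t))-\sinh(\sqrt{|k|}(\tfrac12-t))$, which is exactly the factor appearing in (\ref{P1Eq29}).

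Next I would compute the (constant) Wronskian $W=u_1u_2'-u_1'u_2$ by evaluating at $t=0$. Since $u_1(0)=0$ and $u_1'(0)=\sqrt{|k|}$, I obtain $W=-\sqrt{|k|}\,u_2(0)=-\sqrt{|k|}\bigl[\sqrt{|k|}\cosh(\sqrt{|k|}/2)-\sinh(\sqrt{|k|}/2)\bigr]$, which is nonzero (and negative) precisely under the hypothesis of the lemma. Assembling the standard Green's function $G(s,t)=u_1(s)u_2(t)/W$ for $s\le t$ and $G(s,t)=u_1(t)u_2(s)/W$ for $s\ge t$ then reproduces formula (\ref{P1Eq29}), and the integral representation $u(t)=\int_0^{1/2}G(s,t)h(s)\,ds$ follows from classical linear ODE theory.

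The main remaining task is to show $G(s,t)\le 0$. The denominator is negative by the previous step, and $\sinh(\sqrt{|k|}s)\ge 0$ on $[0,\tfrac12]$, so it suffices to prove $u_2(t)\ge 0$ on $[0,\tfrac12]$. Setting $\tau=\sqrt{|k|}(\tfrac12-t)\in[0,\sqrt{|k|}/2]$ and using the identity $\sqrt{|k|}\cosh\tau-\sinh\tau=\tfrac12\bigl[(\sqrt{|k|}-1)e^\tau+(\sqrt{|k|}+1)e^{-\tau}\bigr]$, I would distinguish two cases. When $\sqrt{|k|}\ge 1$, both coefficients are nonnegative and positivity is automatic. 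When $\sqrt{|k|}<1$, the expression is strictly decreasing in $\tau$, so its infimum on $[0,\sqrt{|k|}/2]$ is attained at $\tau=\sqrt{|k|}/2$, i.e.\ at $t=0$, where positivity is exactly the standing hypothesis of the lemma. This small-$|k|$ case split is the one subtle point of the argument; everything else is routine. The estimate for $t\le s\le\tfrac12$ follows by the symmetry of $G$ in $s$ and $t$ and requires no new input.
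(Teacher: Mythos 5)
Your construction is correct and, in outline, matches the paper: both build $G$ from the two adapted homogeneous solutions $u_1(t)=\sinh(\sqrt{|k|}\,t)$ and $u_2(t)=\sqrt{|k|}\cosh\bigl(\sqrt{|k|}(\tfrac12-t)\bigr)-\sinh\bigl(\sqrt{|k|}(\tfrac12-t)\bigr)$ (the paper compresses this to ``by similar analysis'' referring back to Lemma \ref{P1Lemma8}, whereas you supply the Wronskian computation explicitly), and both reduce the sign claim to showing $u_2(t)\geq 0$ on $[0,\tfrac12]$. Where you genuinely diverge is in how that last positivity is verified. The paper expands $\sqrt{|k|}\cosh\bigl(\tfrac{\sqrt{|k|}}{2}-\sqrt{|k|}t\bigr)-\sinh\bigl(\tfrac{\sqrt{|k|}}{2}-\sqrt{|k|}t\bigr)$ via the hyperbolic addition formulas and bounds it below by the single product $\bigl(\sqrt{|k|}\cosh(\tfrac{\sqrt{|k|}}{2})-\sinh(\tfrac{\sqrt{|k|}}{2})\bigr)\bigl(\cosh(\sqrt{|k|}t)-\sinh(\sqrt{|k|}t)\bigr)\geq 0$, which handles all $k<0$ in one stroke with no case distinction. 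You instead write $\sqrt{|k|}\cosh\tau-\sinh\tau=\tfrac12\bigl[(\sqrt{|k|}-1)e^{\tau}+(\sqrt{|k|}+1)e^{-\tau}\bigr]$ and split on $\sqrt{|k|}\geq 1$ versus $\sqrt{|k|}<1$, using monotonicity in the second case to push the infimum to $t=0$. Both arguments are valid; yours has the advantage of making transparent exactly where the hypothesis $\sqrt{|k|}\cosh(\tfrac{\sqrt{|k|}}{2})-\sinh(\tfrac{\sqrt{|k|}}{2})>0$ is actually needed (only for $|k|<1$, and only at the single extreme point), while the paper's regrouping is shorter and avoids the case split. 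No gap in either.
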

\begin{proof}
Again by similar analysis, we can easily derive the equation (\ref{P1Eq29}). Now,
\begin{eqnarray}
&&\sqrt{|k|}\cosh\left(\frac{\sqrt{|k|}}{2}-\sqrt{|k|}t\right)-\sinh\left(\frac{\sqrt{|k|}}{2}-\sqrt{|k|}t\right),\\
&&=\sqrt{|k|}\cosh\left(\frac{\sqrt{|k|}}{2}\right)\cosh\left(\sqrt{|k|}t\right)-\sqrt{|k|}\sinh\left(\frac{\sqrt{|k|}}{2}\right)\sinh\left(\sqrt{|k|}t\right)-\\
&&\hspace{2cm} \sinh\left(\frac{\sqrt{|k|}}{2}\right)\cosh\left(\sqrt{|k|}t\right)+\cosh\left(\frac{\sqrt{|k|}}{2}\right)\sinh\left(\sqrt{|k|}t\right),\\
&&= \left[\sqrt{|k|}\cosh\left(\frac{\sqrt{|k|}}{2}\right)-\sinh\left(\frac{\sqrt{|k|}}{2}\right)\right]\cosh\left(\sqrt{|k|}t\right)\\
&&\hspace{2cm}-\sqrt{|k|}\sinh\left(\frac{\sqrt{|k|}}{2}\right)\sinh\left(\sqrt{|k|}t\right)+\cosh\left(\frac{\sqrt{|k|}}{2}\right)\sinh\left(\sqrt{|k|}t\right),
\end{eqnarray}
\begin{eqnarray}
&&\geq\left(\sqrt{|k|}\cosh\left(\frac{\sqrt{|k|}}{2}\right)-\sinh\left(\frac{\sqrt{|k|}}{2}\right)\right)\left(\cosh\left(\sqrt{|k|}t\right)-\sinh\left(\sqrt{|k|}t\right)\right),\\
&&\geq 0,~\mbox{since} ~\tanh\left(\sqrt{|k|}t\right)\leq1~\mbox{for all}~t\in \left(0,\frac{1}{2}\right].
\end{eqnarray}
Hence from (\ref{P1Eq29}), we have $G(s,t)\leq 0$ for all $0\leq s\leq \frac{1}{2}$ and $0\leq  t\leq \frac{1}{2}$.
\end{proof}

\begin{lemma}\label{P1Lemma8000}
Let $0<k<4\pi^{2}$ and $u(t)\in C^{2}_{loc}\left((0,\frac{1}{2}],\mathbb{R}\right)$ be the solution of Problem $1(a)$, then 
\begin{equation}
u(t)=\int_{0}^{\frac{1}{2}}G(s,t)h(s)ds,
\end{equation}
where Green's function $G(s,t)$ is given by
\begin{equation}
  G(s,t)=\left\{
    \begin{array}{@{} l c @{}}
      -\frac{\sin\left(\frac{\sqrt{k}}{2}-\sqrt{k}t\right)\sin\left(\sqrt{k}s\right)}{\sqrt{k}\sin\left(\frac{\sqrt{k}}{2}\right)}&0\leq s \leq t,  \\
      -\frac{\sin\left(\frac{\sqrt{k}}{2}-\sqrt{k}s\right)\sin\left(\sqrt{k}t\right)}{\sqrt{k}\sin\left(\frac{\sqrt{k}}{2}\right)}&t\leq s \leq \frac{1}{2},
    \end{array}\right.
\end{equation}
and $G(s,t)\leq 0$ for all $0\leq s\leq \frac{1}{2}$ and $0\leq  t\leq \frac{1}{2}$.
\end{lemma}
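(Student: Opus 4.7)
The plan is to follow the standard two-point Green's function recipe, mirroring what was already done in Lemma \ref{P1Lemma8} for $k<0$, but now with trigonometric rather than hyperbolic functions, and then to verify the sign claim by inspection of the resulting piecewise expression.

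First, I would choose fundamental solutions of the homogeneous equation $u''+ku=0$ that are adapted to the two endpoint conditions:
\begin{equation*}
\phi_1(t)=\sin(\sqrt{k}\,t),\qquad \phi_2(t)=\sin\!\Bigl(\tfrac{\sqrt{k}}{2}-\sqrt{k}\,t\Bigr),
\end{equation*}
so that $\phi_1(0)=0$ (the trigonometric analogue of $\sinh(\sqrt{|k|}\cdot 0)=0$ used in Lemma \ref{P1Lemma8}) and $\phi_2(\tfrac12)=0$, matching the condition $u(\tfrac12)=0$. A direct use of the sine-addition formula gives the Wronskian
\begin{equation*}
W(\phi_1,\phi_2)=\phi_1\phi_2'-\phi_1'\phi_2=-\sqrt{k}\,\sin\!\Bigl(\tfrac{\sqrt{k}}{2}\Bigr),
\end{equation*}
which is nonzero precisely because $\tfrac{\sqrt{k}}{2}\in(0,\pi)$ whenever $k\in(0,4\pi^{2})$; this is where the upper bound $4\pi^2$ enters. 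The classical formula $G(s,t)=\phi_1(\min(s,t))\phi_2(\max(s,t))/W$ then reproduces exactly the piecewise expression stated in the lemma.

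Second, I would establish the sign bound $G(s,t)\le 0$. For any $s,t\in[0,\tfrac12]$ and $k\in(0,4\pi^{2})$, each of the four arguments $\sqrt{k}s$, $\sqrt{k}t$, $\tfrac{\sqrt{k}}{2}-\sqrt{k}s$, $\tfrac{\sqrt{k}}{2}-\sqrt{k}t$ lies in $\bigl[0,\tfrac{\sqrt{k}}{2}\bigr]\subset[0,\pi)$, where sine is nonnegative. The denominator $\sqrt{k}\,\sin(\sqrt{k}/2)$ is strictly positive by the same interval argument. Therefore the explicit minus sign in front of each branch forces $G(s,t)\le 0$ on the whole square $[0,\tfrac12]\times[0,\tfrac12]$.

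The main obstacle, as in Lemma \ref{P1Lemma8}, is not the algebra but rather justifying that $\phi_1(0)=0$ is the correct left-endpoint prescription: both trigonometric fundamental solutions satisfy $\lim_{t\to 0^+}\sqrt{t}\,u'(t)=0$ automatically, so the linear BVP viewed in isolation is underdetermined. The resolution parallels the hyperbolic case: in the iterative scheme of Section \ref{P5Existence} the right-hand side $h$ produces iterates to which Lemma \ref{P1Lemma1} applies, so each iterate automatically satisfies $\lim_{t\to 0}u(t)=0$; this is precisely what the choice $\phi_1(0)=0$ encodes, and consequently $u(t)=\int_{0}^{1/2}G(s,t)h(s)\,ds$ is the representation consistent with the scheme. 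Once this choice is fixed, the construction and the sign analysis above finish the proof.
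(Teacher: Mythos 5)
Your construction is correct and is exactly the argument the paper intends: the paper's proof of this lemma is only the remark ``similar to Lemma \ref{P1Lemma8},'' i.e.\ the standard two-point Green's function built from fundamental solutions vanishing at the respective endpoints, with the sign read off from the fact that all sine arguments lie in $[0,\sqrt{k}/2]\subset[0,\pi)$. Your additional observation that the left boundary condition $\lim_{t\to 0^+}\sqrt{t}\,u'(t)=0$ alone does not determine the solution, and that the normalization $\phi_1(0)=0$ is what encodes the condition $\lim_{t\to 0}u(t)=0$ forced by Lemma \ref{P1Lemma1}, is a genuine subtlety the paper glosses over, and you resolve it appropriately.
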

\begin{proof}
Proof is similar as in Lemma \ref{P1Lemma8}.
\end{proof}
\begin{lemma}\label{P1Lemma9000}
Let $0<k<\pi^{2}$ and $u(t)\in C^{2}_{loc}\left((0,\frac{1}{2}],\mathbb{R}\right)$ be the solution of Problem $2(a)$, then 
\begin{equation}
u(t)=\int_{0}^{\frac{1}{2}}G(s,t)h(s)ds,
\end{equation}
where Green's function $G(s,t)$ is given by
\begin{equation}
  G(s,t)=\left\{
    \begin{array}{@{} l c @{}}
      -\frac{\cos\left(\frac{\sqrt{k}}{2}-\sqrt{k}t\right)\sin\left(\sqrt{k}s\right)}{\sqrt{k}\cos\left(\frac{\sqrt{k}}{2}\right)}&0\leq s \leq t,  \\
      -\frac{\cos\left(\frac{\sqrt{k}}{2}-\sqrt{k}s\right)\sin\left(\sqrt{k}t\right)}{\sqrt{k}\cos\left(\frac{\sqrt{k}}{2}\right)}&t\leq s \leq \frac{1}{2},
    \end{array}\right.
\end{equation}
and $G(s,t)\leq 0$ for all $0\leq s\leq \frac{1}{2}$ and $0\leq  t\leq \frac{1}{2}$.
\end{lemma}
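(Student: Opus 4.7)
The plan is to construct $G$ by the standard recipe for a second-order linear Green's function and then verify sign. Because $u''+ku=h(t)$ with $0<k<\pi^{2}$ has fundamental system $\{\sin(\sqrt{k}\,t),\cos(\sqrt{k}\,t)\}$, I would build $G(s,t)$ piecewise in $t$ for fixed $s\in(0,\tfrac12)$ by choosing on each side a linear combination that (i) solves the homogeneous equation, and (ii) matches the relevant boundary condition. On $0\le t\le s$, take a multiple of $\sin(\sqrt{k}\,t)$, which vanishes at $t=0$ and thus satisfies $\lim_{t\to 0^{+}}\sqrt{t}\,u'(t)=0$ (it is even in $C^{1}$ up to $0$); on $s\le t\le\tfrac12$, take a multiple of $\cos\!\bigl(\tfrac{\sqrt{k}}{2}-\sqrt{k}\,t\bigr)$, whose $t$-derivative vanishes at $t=\tfrac12$, matching $u'(\tfrac12)=0$. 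The two coefficients are then pinned down by the standard continuity of $G$ at $t=s$ and the unit jump $\partial_{t}G(s,s^{+})-\partial_{t}G(s,s^{-})=1$. Once the coefficients are solved, the representation $u(t)=\int_{0}^{1/2}G(s,t)h(s)\,ds$ follows from the variation-of-parameters identity, exactly as in Lemma~\ref{P1Lemma8}.

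A short computation I would carry out is the Wronskian-type denominator. Matching continuity gives a Cramer-style system whose determinant, after the subtraction-formula $\cos(A-B)\cos B+\sin(A-B)\sin B=\cos A$, collapses to $\cos(\tfrac{\sqrt{k}}{2})$. Dividing by this factor produces the stated formula, and this is precisely where the hypothesis $0<k<\pi^{2}$ enters: it guarantees $\tfrac{\sqrt{k}}{2}<\tfrac{\pi}{2}$, so $\cos(\tfrac{\sqrt{k}}{2})>0$ and the Green's function is well defined (no resonance with the eigenvalues of the boundary operator).

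For the sign claim, I would observe that on $[0,\tfrac12]$ the argument $\sqrt{k}\,t$ lies in $[0,\tfrac{\sqrt{k}}{2}]\subset[0,\tfrac{\pi}{2})$, so $\sin(\sqrt{k}\,s)\ge 0$ and $\sin(\sqrt{k}\,t)\ge 0$; likewise $\tfrac{\sqrt{k}}{2}-\sqrt{k}\,t$ and $\tfrac{\sqrt{k}}{2}-\sqrt{k}\,s$ lie in $[0,\tfrac{\sqrt{k}}{2}]\subset[0,\tfrac{\pi}{2})$, so the two cosines are nonnegative. Combined with the positive denominator $\sqrt{k}\cos(\tfrac{\sqrt{k}}{2})$ and the overall minus sign in front, this yields $G(s,t)\le 0$ throughout the square $[0,\tfrac12]^{2}$.

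I expect no serious obstacle: the main subtlety is simply keeping track that the singular left endpoint condition $\lim_{t\to 0^{+}}\sqrt{t}\,u'(t)=0$ is automatically satisfied by the $\sin(\sqrt{k}\,t)$-branch (since $G(s,\cdot)$ is smooth near $0$, the condition reduces to $u(0)=0$, which holds). The verification is essentially identical to Lemma~\ref{P1Lemma8}, with hyperbolic functions replaced by their trigonometric counterparts and the condition $k<0$ replaced by $0<k<\pi^{2}$ to keep $\cos(\tfrac{\sqrt{k}}{2})$ positive.
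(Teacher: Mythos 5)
Your proposal is correct and follows essentially the same route as the paper, which simply invokes the standard Green's function construction (continuity at $s=t$, unit jump in $\partial_t G$, boundary conditions at $t=0$ and $t=\tfrac12$) by reference to Lemmas \ref{P1Lemma9} and \ref{P1Lemma8} without writing out the details. Your version fills in exactly those omitted computations, including the correct identification of $\cos(\tfrac{\sqrt{k}}{2})$ as the nonvanishing denominator for $0<k<\pi^{2}$ and the sign check on each factor.
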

\begin{proof}
Proof is similar as in Lemma \ref{P1Lemma9}.
\end{proof}
\begin{lemma}\label{P1Lemma10000}
Let $0<k\leq\frac{\pi^{2}}{4}$, $\sqrt{k}\cos\left(\frac{\sqrt{k}}{2}\right)-\sin\left(\frac{\sqrt{k}}{2}\right)>0$ 
 and $u(t)\in C^{2}_{loc}\left((0,\frac{1}{2}],\mathbb{R}\right)$ be the solution of Problem $3(a)$, then 
\begin{equation}
u(t)=\int_{0}^{\frac{1}{2}}G(s,t)h(s)ds,
\end{equation}
where Green's function $G(s,t)$ is given by
\begin{equation}
  G(s,t)=\left\{
    \begin{array}{@{} l c @{}}
      -\frac{\left[\sqrt{k}\cos\left(\frac{\sqrt{k}}{2}-\sqrt{k}t\right)-\sin\left(\frac{\sqrt{k}}{2}-\sqrt{k}t\right)\right]  \sin\left(\sqrt{k}s\right)}{\sqrt{k}\left[\sqrt{k}\cos\left(\frac{\sqrt{k}}{2}\right)-\sin\left(\frac{\sqrt{k}}{2}\right)\right]}&0\leq s \leq t,  \\
      -\frac{\left[\sqrt{k}\cos\left(\frac{\sqrt{k}}{2}-\sqrt{k}s\right)-\sin\left(\frac{\sqrt{k}}{2}-\sqrt{k}s\right)\right]  \sinh\left(\sqrt{k}t\right)}{\sqrt{k}\left[\sqrt{k}\cos\left(\frac{\sqrt{k}}{2}\right)-\sin\left(\frac{\sqrt{k}}{2}\right)\right]}&t\leq s \leq \frac{1}{2},
    \end{array}\right. 
\end{equation}
and  $G(s,t)\leq 0$ for all $0\leq s\leq \frac{1}{2}$ and $0\leq  t\leq \frac{1}{2}$.
\end{lemma}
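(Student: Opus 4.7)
The plan is to follow the same template as Lemma \ref{P1Lemma10}, substituting trigonometric functions for the hyperbolic ones since now $k>0$. The homogeneous equation $u''+ku=0$ has general solution a linear combination of $\sin(\sqrt{k}t)$ and $\cos(\sqrt{k}t)$. I would first identify the two branches compatible with each endpoint: on the left take $u_1(t)=\sin(\sqrt{k}t)$ (which trivially satisfies $\lim_{t\to 0^+}\sqrt{t}\,u_1'(t)=0$ and is the branch consistent with the limit $u(0)=0$ forced by Lemma \ref{P1Lemma1}); on the right take
$$u_2(t)=\sqrt{k}\cos\!\left(\tfrac{\sqrt{k}}{2}-\sqrt{k}t\right)-\sin\!\left(\tfrac{\sqrt{k}}{2}-\sqrt{k}t\right),$$
which, by direct substitution, satisfies $u_2(1/2)=u_2'(1/2)=\sqrt{k}$, hence the right boundary condition of Problem $3(a)$. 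This $u_2$ is the exact trigonometric analogue of the branch used in Lemma \ref{P1Lemma10}.

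Next I would compute the (constant) Wronskian $W=u_1u_2'-u_1'u_2$, most conveniently at $t=1/2$: using $u_1(1/2)=\sin(\sqrt{k}/2)$ and $u_1'(1/2)=\sqrt{k}\cos(\sqrt{k}/2)$ together with the values $u_2(1/2)=u_2'(1/2)=\sqrt{k}$ above, one obtains
$$W=-\sqrt{k}\bigl[\sqrt{k}\cos(\sqrt{k}/2)-\sin(\sqrt{k}/2)\bigr],$$
which is nonzero precisely because of the hypothesis $\sqrt{k}\cos(\sqrt{k}/2)-\sin(\sqrt{k}/2)>0$. Assembling $G(s,t)=u_1(\min\{s,t\})\,u_2(\max\{s,t\})/W$ then reproduces the displayed piecewise formula, and the integral representation $u(t)=\int_0^{1/2}G(s,t)h(s)\,ds$ follows by the standard variation-of-parameters verification.

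It remains to establish $G(s,t)\le 0$. The denominator $\sqrt{k}\bigl[\sqrt{k}\cos(\sqrt{k}/2)-\sin(\sqrt{k}/2)\bigr]$ is positive by hypothesis. Since $0<k\le\pi^2/4$ forces $\sqrt{k}s,\sqrt{k}t\in[0,\pi/4]$, the factors $\sin(\sqrt{k}s)$ and $\sin(\sqrt{k}t)$ appearing in the two branches are nonnegative. For the remaining factor, I would apply the $\sin$ and $\cos$ subtraction formulas, in exact analogy with the manipulation in Lemma \ref{P1Lemma10}, obtaining
$$\sqrt{k}\cos(\sqrt{k}/2-\sqrt{k}\tau)-\sin(\sqrt{k}/2-\sqrt{k}\tau)=\bigl[\sqrt{k}\cos(\sqrt{k}/2)-\sin(\sqrt{k}/2)\bigr]\cos(\sqrt{k}\tau)+\bigl[\sqrt{k}\sin(\sqrt{k}/2)+\cos(\sqrt{k}/2)\bigr]\sin(\sqrt{k}\tau),$$
which is a sum of two nonnegative terms on $\tau\in[0,1/2]$: the first bracket is positive by the standing hypothesis and $\cos(\sqrt{k}\tau)\ge 0$ on $[0,\pi/4]$; the second bracket is obviously positive and $\sin(\sqrt{k}\tau)\ge 0$. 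Each piece of $G(s,t)$ is therefore the negative of a product of nonnegatives, yielding $G(s,t)\le 0$ throughout $[0,1/2]^2$.

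I do not expect a serious obstacle; the content is almost entirely routine, tracking the $k<0$ argument step for step. The one place where care is required is keeping straight the two hypotheses: the range restriction $0<k\le\pi^2/4$ confines every argument of $\sin$ and $\cos$ to the first quadrant so that sign tracking is automatic, while the condition $\sqrt{k}\cos(\sqrt{k}/2)-\sin(\sqrt{k}/2)>0$ serves the dual purpose of making the Wronskian nonzero (so $G$ is well defined) and guaranteeing the positivity of the first bracket in the trigonometric expansion above — mirroring the role of the analogous hypothesis in Lemma \ref{P1Lemma10}.
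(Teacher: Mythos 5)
Your proposal is correct and follows essentially the same route as the paper, which simply declares the proof ``similar'' to the hyperbolic case of Lemma \ref{P1Lemma10}: you build $G$ from the two homogeneous branches adapted to each endpoint, compute the Wronskian, and establish $G\le 0$ via the trigonometric addition formulas, with the range $0<k\le \pi^2/4$ keeping all arguments in the first quadrant. Your construction also implicitly corrects the paper's typo in the second branch, where $\sinh\left(\sqrt{k}t\right)$ should read $\sin\left(\sqrt{k}t\right)$.
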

\begin{proof}
Proof is similar as in Lemma \ref{P1Lemma10}.
\end{proof}
\subsection{Anti-maximum principle}
\begin{proposition}\label{P1Proposition311} Let $k<0$ and $h(t)\in C^{2}_{loc}\left((0,\frac{1}{2}],\mathbb{R}\right)$ is such that $h(t)\geq0$, then the solutions of Problem $1(a)$ and Problem $2(a)$  are non positive. 
\end{proposition}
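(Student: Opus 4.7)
The plan is straightforward: invoke the integral representations already established in Lemma \ref{P1Lemma8} (for Problem $1(a)$) and Lemma \ref{P1Lemma9} (for Problem $2(a)$), which express any solution as
$$u(t) = \int_{0}^{1/2} G(s,t)\, h(s)\, ds,$$
with the respective Green's functions given by (\ref{P1Eq25}) and (\ref{P1Eq27}). Those same two lemmas also state the crucial sign property $G(s,t) \leq 0$ on $[0,1/2]\times[0,1/2]$. Combined with the hypothesis $h(s) \geq 0$, the integrand $G(s,t)\, h(s)$ is non-positive, and therefore $u(t) \leq 0$ for every $t \in (0, 1/2]$. The argument for Problem $2(a)$ is identical to that for Problem $1(a)$, only the kernel changes.

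No obstacle of real substance arises: once the Green's function constructions of the preceding subsection are in hand, the proposition is essentially a one-line corollary. The only minor point worth noting is the integrability of $G(s,t)\, h(s)$ near $s=0$, but from the explicit formulas we have $\sinh(\sqrt{|k|}\, s) \sim \sqrt{|k|}\, s$ as $s\to 0^{+}$, so $G(s,t) = O(s)$ near the origin, and under the assumed regularity $h \in C^{2}_{loc}((0,1/2],\mathbb{R})$ the integral is well defined. Thus the full write-up reduces to citing Lemma \ref{P1Lemma8} and Lemma \ref{P1Lemma9} and observing that the non-positivity of $G$ together with the non-negativity of $h$ forces $u \leq 0$.
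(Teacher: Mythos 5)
Your argument is exactly the one the paper intends: the proposition is stated immediately after Lemmas \ref{P1Lemma8} and \ref{P1Lemma9} precisely so that the representation $u(t)=\int_{0}^{1/2}G(s,t)h(s)\,ds$ together with $G(s,t)\leq 0$ and $h\geq 0$ yields $u\leq 0$, and the paper gives no further proof. Your additional remark on the integrability of $G(s,t)h(s)$ near $s=0$ is a harmless extra observation; the proposal is correct and matches the paper's (implicit) proof.
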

\begin{proposition} Let $k<0$, $\left(\sqrt{|k|}\cosh\left(\frac{\sqrt{|k|}}{2}\right)-\sinh\left(\frac{\sqrt{|k|}}{2}\right)\right)>0$ and $h(t)\in C^{2}_{loc}\left((0,\frac{1}{2}],\mathbb{R}\right)$ is such that $h(t)\geq0$, then the solutions of Problem $3(a)$ are non positive. 
\end{proposition}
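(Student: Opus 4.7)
The plan is to deduce this anti-maximum principle as an immediate corollary of the Green's function representation established in Lemma \ref{P1Lemma10}. The two hypotheses on $k$ (namely $k<0$ and $\sqrt{|k|}\cosh(\tfrac{\sqrt{|k|}}{2})-\sinh(\tfrac{\sqrt{|k|}}{2})>0$) are precisely the standing assumptions of that lemma, so I would start by rewriting any solution $u$ of Problem $3(a)$ in the integral form
\[
u(t)=\int_0^{1/2} G(s,t)\,h(s)\,ds,
\]
with $G(s,t)$ given by (\ref{P1Eq29}).

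From here the argument is one line. Lemma \ref{P1Lemma10} already shows $G(s,t)\leq 0$ on $[0,\tfrac12]\times[0,\tfrac12]$, so combining this with the hypothesis $h(s)\geq 0$ makes the integrand pointwise non-positive. Integrating over $s\in[0,\tfrac12]$ therefore gives $u(t)\leq 0$ for every $t\in(0,\tfrac12]$, which is the claim.

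There is essentially no obstacle: the only substantive work, namely the sign analysis of the Green's function via the identity that bounds $\sqrt{|k|}\cosh(\tfrac{\sqrt{|k|}}{2}-\sqrt{|k|}t)-\sinh(\tfrac{\sqrt{|k|}}{2}-\sqrt{|k|}t)$ from below by $\bigl(\sqrt{|k|}\cosh(\tfrac{\sqrt{|k|}}{2})-\sinh(\tfrac{\sqrt{|k|}}{2})\bigr)\bigl(\cosh(\sqrt{|k|}t)-\sinh(\sqrt{|k|}t)\bigr)\geq 0$, has already been carried out upstream. The role of the positivity hypothesis on $\sqrt{|k|}\cosh(\tfrac{\sqrt{|k|}}{2})-\sinh(\tfrac{\sqrt{|k|}}{2})$ is twofold (nonvanishing denominator in $G$, and non-positivity of $G$), but both roles are already handled, so the proof reduces to citing Lemma \ref{P1Lemma10} and taking the sign of the integral.
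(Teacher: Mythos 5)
Your proof is correct and matches the paper's (implicit) argument exactly: the paper states this proposition without proof precisely because it follows immediately from Lemma \ref{P1Lemma10}, which supplies the representation $u(t)=\int_0^{1/2}G(s,t)h(s)\,ds$ with $G(s,t)\leq 0$, so that $h\geq 0$ forces $u\leq 0$. Nothing is missing.
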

\begin{proposition}\label{P1Proposition3110} Let $0<k<4\pi^{2}$ (respectively $0<k<\pi^{2}$) and $h(t)\in C^{2}_{loc}\left((0,\frac{1}{2}],\mathbb{R}\right)$ is such that $h(t)\geq0$, then the solutions of Problem $1(a)$ (respectively Problem $2(a)$) are non positive. 
\end{proposition}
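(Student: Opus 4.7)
The approach is essentially immediate given what has already been established. Lemma \ref{P1Lemma8000} shows that, for $0 < k < 4\pi^{2}$, every solution of Problem $1(a)$ admits the representation
\begin{equation*}
u(t) = \int_{0}^{1/2} G(s,t)\, h(s)\, ds,
\end{equation*}
and Lemma \ref{P1Lemma9000} gives the analogous representation for Problem $2(a)$ when $0 < k < \pi^{2}$. In both cases the Green's function satisfies $G(s,t) \leq 0$ on $[0,1/2] \times [0,1/2]$. Under the hypothesis $h(s) \geq 0$, the integrand $G(s,t)h(s)$ is therefore pointwise non-positive, and integration gives $u(t) \leq 0$ on $(0,1/2]$. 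This is the whole proof; the content lives entirely in the sign of $G$ furnished by the two preceding lemmas.

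The step I would nevertheless write out carefully, because it is the only place where the parameter restrictions $k < 4\pi^{2}$ and $k < \pi^{2}$ enter, is the verification that $G$ does not change sign on the stated ranges. For Problem $1(a)$ one checks that $\sqrt{k}/2 \in (0,\pi)$, so $\sin(\sqrt{k}/2) > 0$, and that the two numerator factors $\sin(\sqrt{k}s)$ and $\sin(\sqrt{k}/2 - \sqrt{k}t)$ have arguments in $[0,\pi)$ for $s,t \in [0,1/2]$, making them non-negative. For Problem $2(a)$ with $0 < k < \pi^{2}$, the denominator factor $\cos(\sqrt{k}/2)$ is positive because $\sqrt{k}/2 \in (0,\pi/2)$, while $\cos(\sqrt{k}/2 - \sqrt{k}t) > 0$ since the argument stays in $(-\pi/2,\pi/2)$, and $\sin(\sqrt{k}s) \geq 0$ on $[0,\pi/2]$.

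The main obstacle, such as it is, is purely bookkeeping: identifying the sharp cutoffs at which $G$ loses its sign. These are precisely the first zeros $\sqrt{k}/2 = \pi$ of $\sin(\sqrt{k}/2)$ and $\sqrt{k}/2 = \pi/2$ of $\cos(\sqrt{k}/2)$, which is exactly where the denominators of $G$ vanish; this is why the thresholds are $4\pi^{2}$ and $\pi^{2}$ respectively. Once these ranges are accepted, combining $h \geq 0$ with $G \leq 0$ and using the integral representations finishes both cases in a single line.
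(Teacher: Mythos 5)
Your argument is correct and is exactly the route the paper intends: the propositions in this subsection are stated without proof precisely because they follow immediately from the integral representations and the sign $G(s,t)\leq 0$ established in Lemmas \ref{P1Lemma8000} and \ref{P1Lemma9000}. Your additional verification of the sign of $G$ on the stated parameter ranges is sound and simply makes explicit what those lemmas assert.
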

\begin{proposition} Let $0<k\leq \frac{\pi^{2}}{4}$, $\left(\sqrt{k}\cos\left(\frac{\sqrt{k}}{2}\right)-\sin\left(\frac{\sqrt{k}}{2}\right)\right)>0$ and $h(t)\in C^{2}_{loc}\left((0,\frac{1}{2}],\mathbb{R}\right)$ is such that $h(t)\geq0$, then the solutions of Problem $3(a)$ are non positive. 
\end{proposition}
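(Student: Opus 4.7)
The plan is to invoke the Green's function representation established in Lemma \ref{P1Lemma10000}. Under the two hypotheses $0 < k \leq \pi^{2}/4$ and $\sqrt{k}\cos(\sqrt{k}/2) - \sin(\sqrt{k}/2) > 0$, every solution $u \in C^{2}_{loc}((0,\tfrac{1}{2}],\mathbb{R})$ of Problem $3(a)$ admits the integral representation
\begin{equation*}
u(t) = \int_{0}^{1/2} G(s,t)\, h(s)\, ds, \qquad t \in \left(0,\tfrac{1}{2}\right],
\end{equation*}
where $G(s,t)$ is the explicit kernel given in Lemma \ref{P1Lemma10000}. Crucially, that lemma also establishes $G(s,t) \leq 0$ on $[0,1/2] \times [0,1/2]$.

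With that representation in hand, the conclusion is immediate. Since by assumption $h(s) \geq 0$ and by Lemma \ref{P1Lemma10000} we have $G(s,t) \leq 0$, the integrand $G(s,t)h(s)$ is pointwise non-positive for every fixed $t \in (0,1/2]$. Integration preserves the sign, hence $u(t) \leq 0$ for all $t \in (0,1/2]$, which is precisely the asserted anti-maximum principle.

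The only mild technicality is to ensure the integral is well-defined at $s = 0$, where $h$ may be singular through the term $u^{2}/(8s^{2})$. The kernel $G(s,t)$ vanishes linearly at $s = 0$ via its factor $\sin(\sqrt{k}\, s)/\sqrt{k} \sim s$, and for any admissible solution the behavior of $u$ near $0$ (cf.\ Lemma \ref{P1Lemma1} and the integrability established in the proofs of Lemma \ref{P1lemma5}--\ref{P1lemma7}) guarantees $u^{2}/s \in L^{1}((0,1/2])$; so there is no genuine obstruction. In short, the real content of the proposition is already packaged inside the construction and sign analysis of the Green's function in Lemma \ref{P1Lemma10000}, and the present proof amounts to quoting that lemma and observing that a non-positive kernel against a non-negative forcing produces a non-positive output.
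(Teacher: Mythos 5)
Your proof is correct and follows exactly the route the paper intends: the propositions in the anti-maximum-principle subsection are stated without proof precisely because they are immediate consequences of the Green's function representation and the sign condition $G(s,t)\leq 0$ established in Lemma \ref{P1Lemma10000}. Your additional remark on integrability near $s=0$ is a sensible (and welcome) bit of extra care that the paper omits.
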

\subsection{Reverse order   lower and upper solutions}
Here, we define lower and upper solutions corresponding to Problem $1$, Problem $2$ and Problem $3$.
\begin{definition}(lower solution)
A function $\alpha \in  C^{2}_{loc}\left((0,\frac{1}{2}],\mathbb{R}\right)$ is the lower solution of Problem $1$ (respectively Problem $2$ and Problem $3$) if 
\begin{equation}\label{P1Eq36}
\alpha''\leq\frac{\alpha^{2}}{8t^{2}}+\frac{\lambda}{2},~\mbox{for}~t\in \left(0,\frac{1}{2}\right],
\end{equation}
with $\displaystyle \lim_{t\rightarrow 0}\frac{\alpha(t)}{\sqrt{t}}=0$ and $\alpha\left(\frac{1}{2}\right)\leq 0$ (respectively $\alpha'\left(\frac{1}{2}\right)\leq 0$ and $\alpha\left(\frac{1}{2}\right)\leq \alpha'\left(\frac{1}{2}\right)$).
\end{definition}
\begin{definition}(upper solution)
A function $\beta \in  C^{2}_{loc}\left((0,\frac{1}{2}],\mathbb{R}\right)$ is the upper solution of Problem $1$ (respectively Problem $2$ and Problem $3$) if 
\begin{equation}\label{P1Eq37}
\beta''\geq\frac{\beta^{2}}{8t^{2}}+\frac{\lambda}{2},~\mbox{for}~t\in \left(0,\frac{1}{2}\right].
\end{equation}
with $\displaystyle \lim_{t\rightarrow 0}\frac{\beta(t)}{\sqrt{t}}=0$ and $\beta\left(\frac{1}{2}\right)\geq 0$ (respectively $\beta'\left(\frac{1}{2}\right)\geq 0$ and $\beta\left(\frac{1}{2}\right)\geq \beta'\left(\frac{1}{2}\right)$).
\end{definition}
Now, we construct two sequences $\lbrace\alpha_{n}\rbrace$ and $\lbrace \beta_{n}\rbrace$ corresponding to Problem $1(a)$ (respectively Problem $2(a)$ and Problem $3(a)$), which are defined by
\begin{eqnarray}
&&\nonumber \alpha_{0}=\alpha,\\
\label{P1Eq40}
&&\alpha_{n+1}''+k\alpha_{n+1}=\frac{\alpha_{n}^{2}}{8t^{2}}+\frac{\lambda}{2}+k \alpha_{n},~\mbox{for}~t\in \left(0,\frac{1}{2}\right],\\
\label{P1Eq41}
&& \lim_{t\rightarrow 0}\frac{\alpha_{n+1}(t)}{\sqrt{t}}=0~\mbox{and}~ \alpha_{n+1}\left(\frac{1}{2}\right)= 0, 
\end{eqnarray}
$~\left(\mbox{respectively} ~\alpha_{n+1}'\left(\frac{1}{2}\right)= 0~ \mbox{and}~ \alpha_{n+1}\left(\frac{1}{2}\right)= \alpha_{n+1}'\left(\frac{1}{2}\right)\right)$ and 
\begin{eqnarray}
&&\nonumber \beta_{0}=\beta,\\
\label{P1Eq42}
&&\beta_{n+1}''+k\beta_{n+1}=\frac{\beta_{n}^{2}}{8t^{2}}+\frac{\lambda}{2}+k \beta_{n},~\mbox{for}~t\in \left(0,\frac{1}{2}\right],\\
\label{P1Eq43}
&& \lim_{t\rightarrow 0}\frac{\beta_{n+1}(t)}{\sqrt{t}}=0~\mbox{and}~ \beta_{n+1}\left(\frac{1}{2}\right)= 0, 
\end{eqnarray}
$\left(\mbox{respectively}~ \beta_{n+1}'\left(\frac{1}{2}\right)= 0~ \mbox{and}~ \beta_{n+1}\left(\frac{1}{2}\right)= \beta_{n+1}'\left(\frac{1}{2}\right)\right)$. We assume the following properties:
\begin{itemize}
\item $P_1$: $\alpha_{0}$ and $\beta_{0}$ satisfies 
\begin{eqnarray}
\lim_{t\rightarrow 0}\frac{|\alpha_{0}(t)|}{t}<\infty, ~ \lim_{t\rightarrow 0}\alpha_{0}(t)=0,~\alpha_{0}(t)\leq0,
\end{eqnarray}
and
\begin{eqnarray}
\lim_{t\rightarrow 0}\frac{|\beta_{0}(t)|}{t}<\infty, ~ \lim_{t\rightarrow 0}\beta_{0}(t)=0,
\end{eqnarray}
\item $P_2$: $h(t,u)$ is continuous on $D_{0}$ where $D_{0}=\left \lbrace (t,u)\in \left(0,\frac{1}{2}\right]\times \mathbb{R}: \beta_{0}=\beta\leq u \leq \alpha_{0} \right \rbrace$.
\end{itemize}
\begin{theorem}\label{P1Theorem1}
Assume $k<0$, $\lambda \in \mathbb{R}$ and there exist $\alpha_{0}$ and $\beta_{0}\in C^{2}_{loc}\left((0,\frac{1}{2}],\mathbb{R}\right)$ are lower and upper solutions of Problem $1$ which satisfy the properties $P_1$ and $P_2$  such that  $\beta_{0}\leq \alpha_{0}=0$,
then the  Problem $1$ has at least one solution in the region $D_{0}$ and the sequences $\lbrace\alpha_{n}\rbrace$, $\lbrace\beta_{n}\rbrace$ defined by (\ref{P1Eq40}), (\ref{P1Eq41}) and (\ref{P1Eq42}), (\ref{P1Eq43}) converges to a solutions $u$, $v$  uniformly as well as monotonically respectively, such that 
\begin{equation}
\beta\leq u \leq v \leq \alpha=0,~\forall t\in \left(0,\frac{1}{2}\right].
\end{equation}
\end{theorem}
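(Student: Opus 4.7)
The plan is to build a monotone chain $\beta_0\leq\beta_1\leq\cdots\leq\alpha_1\leq\alpha_0=0$ among the iterates, take pointwise monotone limits, and then upgrade those limits to classical solutions of Problem $1$ via the Green's function representation of Lemma \ref{P1Lemma8}. The two workhorses are that representation (available because $k<0$) and the anti-maximum principle of Proposition \ref{P1Proposition311}; note that the choice $\alpha_0\equiv 0$ combined with the lower-solution inequality already forces $\lambda\geq 0$.

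For the base step, because $\beta_0$ is an upper solution and $\beta_1$ satisfies Problem $1(a)$ with source $\beta_0^2/(8t^2)+\lambda/2+k\beta_0$, subtraction yields $(\beta_0-\beta_1)''+k(\beta_0-\beta_1)\geq 0$. The hypothesis $\beta_0\leq\alpha_0=0$ combined with the upper-solution endpoint condition $\beta_0(\tfrac12)\geq 0$ pins down $\beta_0(\tfrac12)=0=\beta_1(\tfrac12)$, while property $P_1$ supplies the condition at the origin. Proposition \ref{P1Proposition311} then gives $\beta_0\leq\beta_1$, and a symmetric computation starting from the lower-solution inequality for $\alpha_0\equiv 0$ yields $\alpha_1\leq 0=\alpha_0$. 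For the cross inequality, differencing the two iterated equations gives
\begin{equation*}
(\alpha_1-\beta_1)''+k(\alpha_1-\beta_1)=(\alpha_0-\beta_0)\Bigl[\frac{\alpha_0+\beta_0}{8t^2}+k\Bigr],
\end{equation*}
and with $\alpha_0=0$, $\beta_0\leq 0$, $k<0$ the right-hand side is non-positive, so Proposition \ref{P1Proposition311} applied to $-(\alpha_1-\beta_1)$ delivers $\alpha_1\geq\beta_1$.

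The induction step is driven by the observation that the nonlinearity $f(u,t)=u^2/(8t^2)+ku$ has derivative $u/(4t^2)+k$, which is strictly negative on $(-\infty,0]\times(0,\tfrac12]$ whenever $k<0$; hence $f(\cdot,t)$ is decreasing there. Repeating the three differences with $(\alpha_n,\beta_n)$ in place of $(\alpha_0,\beta_0)$ and using this monotonicity preserves the chain $\beta_n\leq\beta_{n+1}\leq\alpha_{n+1}\leq\alpha_n\leq 0$. Monotone convergence then supplies pointwise limits $\beta_n\nearrow v$ and $\alpha_n\searrow u$. Because $|\alpha_n|,|\beta_n|\leq|\beta_0|$ and Lemma \ref{P5lemma50} together with $P_1$ produce an $L^1((0,\tfrac12])$ majorant for $\beta_0^2/s^2$, dominated convergence can be carried inside
\begin{equation*}
\alpha_{n+1}(t)=\int_0^{1/2}G(s,t)\Bigl[\frac{\alpha_n^2}{8s^2}+\frac{\lambda}{2}+k\alpha_n\Bigr]\,ds,
\end{equation*}
yielding $u(t)=\int_0^{1/2}G(s,t)[u^2/(8s^2)+\lambda/2+ku]\,ds$; applying the differential operator $\partial_t^2+k$ cancels the spurious $ku$ term and leaves $u''=u^2/(8t^2)+\lambda/2$. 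The same construction identifies $v$ as a solution, and Dini's theorem upgrades the pointwise monotone convergence to uniform convergence on every compact $[a,\tfrac12]\subset(0,\tfrac12]$.

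The principal difficulty I anticipate is propagating the singular boundary condition $\lim_{t\to 0^+}\sqrt{t}\,u'(t)=0$ through the iteration and into the limit. This needs more than routine compactness: it relies on the equivalence in Corollary \ref{P1Corollary1}, the weighted bound of Lemma \ref{P5lemma50}, and a uniform majorant on $\alpha_n^2/s^2$ and $\beta_n^2/s^2$ that is stable under the iteration, all of which ultimately trace back to property $P_1$.
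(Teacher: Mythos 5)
Your proposal is correct and follows essentially the same route as the paper: a monotone iteration driven by the anti-maximum principle of Proposition \ref{P1Proposition311}, with the induction step resting on the same observation that $(\beta-\alpha)\bigl[\tfrac{\beta+\alpha}{8t^{2}}+k\bigr]\geq 0$ for nonpositive ordered functions when $k<0$, followed by Dini's theorem and passage to the limit in the Green's function representation of Lemma \ref{P1Lemma8}. The only differences are organizational (you interleave the cross inequality into the induction where the paper treats it as a separate third part) and that you are more explicit than the paper about the dominated-convergence majorant and the boundary behaviour at the origin.
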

\begin{proof}
We divide the proof into three parts. In the first part, we prove that 
\begin{equation}
\beta_{n} ~\mbox{is a upper solution of problem 1},~\beta_{n}\leq \beta_{n+1}~\mbox{and}~\beta_{n+1}\leq \alpha_{0}~\forall n\in \mathbb{N}.
\end{equation}
We apply mathematical induction on $n$. For $n=0$, from (\ref{P1Eq42}) and (\ref{P1Eq43}), we have
\begin{eqnarray}\label{P1Eq50}
&&\beta_{1}''+k\beta_{1}=\frac{\beta_{0}^{2}}{8t^{2}}+\frac{\lambda}{2}+k \beta_{0},~\mbox{for}~t\in \left(0,\frac{1}{2}\right],\\
\label{P1Eq51}
&& \lim_{t\rightarrow 0}\frac{\beta_{1}(t)}{\sqrt{t}}=0~\mbox{and}~ \beta_{1}\left(\frac{1}{2}\right)= 0. 
\end{eqnarray}
Now, from equation (\ref{P1Eq37}), we have
\begin{eqnarray}\label{P1Eq52}
&&\left(\beta_{0}-\beta_{1}\right)''+k\left(\beta_{0}-\beta_{1}\right)=-\frac{\beta_{0}^{2}}{8t^{2}}-\frac{\lambda}{2}+\beta_{0}''\geq 0,~\mbox{for}~t\in \left(0,\frac{1}{2}\right],\\
\label{P1Eq53}
&& \lim_{t\rightarrow 0}\frac{\beta_{0}-\beta_{1}(t)}{\sqrt{t}}=0~\mbox{and}~ \left(\beta_{0}-\beta_{1}\right)\left(\frac{1}{2}\right)\geq 0. 
\end{eqnarray}
Therefore by proposition \ref{P1Proposition311}, we have $\beta_{0}\leq \beta_{1}$. Again from (\ref{P1Eq36}) and (\ref{P1Eq50}), we have
\begin{eqnarray}\label{P1Eq54}
\left(\beta_{1}-\alpha_{0}\right)''+k\left(\beta_{1}-\alpha_{0}\right)&&=\frac{\beta_{0}^{2}}{8t^{2}}+\frac{\lambda}{2}+k\left(\beta_{0}-\alpha_{0}\right)-\alpha_{0}'',~\mbox{for}~t\in \left(0,\frac{1}{2}\right],\\
&&\geq \left(\frac{\beta_{0}+\alpha_{0}}{8t}+kt\right)\left(\frac{\beta_{0}-\alpha_{0}}{t}\right).
\end{eqnarray}
Since $\beta_{0}\leq \alpha_{0}$, therefore we have 
\begin{equation}
\left(\beta_{1}-\alpha_{0}\right)''+k\left(\beta_{1}-\alpha_{0}\right)\geq 0,~\forall t\in \left(0,\frac{1}{2}\right],
\end{equation} 
\begin{eqnarray}
\label{P1Eq55}
&& \lim_{t\rightarrow 0}\frac{(\beta_{1}-\alpha_{0})(t)}{\sqrt{t}}=0~\mbox{and}~ \left(\beta_{1}-\alpha_{0}\right)\left(\frac{1}{2}\right)\geq 0. 
\end{eqnarray}
Hence by proposition \ref{P1Proposition311}, we have $\beta_{1}\leq \alpha_{0}$. So our assumptions are true for $n=0$. Let our assumptions be true up to $n=m$. Therefore, we have
\begin{equation}\label{P5Eq13000}
\beta_{n} ~\mbox{is a upper solution of problem 1},~\beta_{n}\leq \beta_{n+1}~\mbox{and}~\beta_{n+1}\leq \alpha_{0}~\mbox{for}~ n=1,2,\cdots,m.
\end{equation} Now we want to show that our assumptions are true for $n+1$. Therefore from equation (\ref{P1Eq42}), we have
\begin{eqnarray}
\beta_{n+1}''-\frac{\beta_{n+1}^{2}}{8t^{2}}-\frac{\lambda}{2}&&=\frac{\beta_{n}^{2}-\beta_{n+1}^{2}}{8t^{2}}+k \left(\beta_{n}-\beta_{n+1}\right),~\mbox{for}~t\in \left(0,\frac{1}{2}\right],\\
&&\geq \left(\frac{\beta_{n}+\beta_{n+1}}{8t}+kt\right)\left(\frac{\beta_{n}-\beta_{n+1}}{t}\right).
\end{eqnarray}
Again by using conditions (\ref{P5Eq13000}), we have
\begin{eqnarray}\label{P1Eq61}
&&\beta_{n+1}''\geq \frac{\beta_{n+1}^{2}}{8t^{2}}+\frac{\lambda}{2},~\mbox{for}~t\in \left(0,\frac{1}{2}\right],\\
&&\lim_{t\rightarrow 0}\frac{\beta_{n+1}(t)}{\sqrt{t}}=0~\mbox{and}~ \beta_{n+1}\left(\frac{1}{2}\right)\geq 0.
\end{eqnarray}
Hence $\beta_{n+1}$ is a upper solution of Problem $1$. Now, from equation (\ref{P1Eq42}) and (\ref{P1Eq61}), we have
\begin{eqnarray}\label{P1Eq63}
&&\left(\beta_{n+1}-\beta_{n+2}\right)''+k\left(\beta_{n+1}-\beta_{n+2}\right)=-\frac{\beta_{n+1}^{2}}{8t^{2}}-\frac{\lambda}{2}+\beta_{n+1}''\geq 0,~\mbox{for}~t\in \left(0,\frac{1}{2}\right],\\
\label{P1Eq64}
&& \lim_{t\rightarrow 0}\frac{(\beta_{n+1}-\beta_{n+2})(t)}{\sqrt{t}}=0~\mbox{and}~ \left(\beta_{n+1}-\beta_{n+2}\right)\left(\frac{1}{2}\right)\geq 0. 
\end{eqnarray}
So by proposition \ref{P1Proposition311}, we have $\beta_{n+1}\leq \beta_{n+2}$. Again from (\ref{P1Eq36}) and (\ref{P1Eq42}), we have
\begin{eqnarray}\label{P1Eq65}
\left(\beta_{n+2}-\alpha_{0}\right)''+k\left(\beta_{n+2}-\alpha_{0}\right)&&=\frac{\beta_{n+1}^{2}}{8t^{2}}+\frac{\lambda}{2}+k\left(\beta_{n+1}-\alpha_{0}\right)-\alpha_{0}'',~\mbox{for}~t\in \left(0,\frac{1}{2}\right],\\
&&\geq \left(\frac{\beta_{n+1}+\alpha_{0}}{8t}+kt\right)\left(\frac{\beta_{n+1}-\alpha_{0}}{t}\right).
\end{eqnarray}
By similar analysis, we have $\beta_{n+2}\leq \alpha_{0}$. Hence by mathematical induction, we have
\begin{equation}
\beta_{n} ~\mbox{is a upper solution of Problem 1},~\beta_{n}\leq \beta_{n+1}~\mbox{and}~\beta_{n+1}\leq \alpha_{0}~\forall n\in \mathbb{N}.
\end{equation}
In the second part of the proof, we have to show 
\begin{equation}
\alpha_{n} ~\mbox{is a lower solution of Problem 1}~\mbox{and}~\alpha_{n+1}\leq \alpha_{n}~\forall n\in \mathbb{N}.
\end{equation}
Now from (\ref{P1Eq40}) and (\ref{P1Eq41}), we have
\begin{eqnarray}
\label{P1Eq66}
&&\alpha_{1}''+k\alpha_{1}=\frac{\alpha_{0}^{2}}{8t^{2}}+\frac{\lambda}{2}+k \alpha_{0},~\mbox{for}~t\in \left(0,\frac{1}{2}\right],\\
\label{P1Eq67}
&& \lim_{t\rightarrow 0}\frac{\alpha_{1}(t)}{\sqrt{t}}=0~\mbox{and}~ \alpha_{1}\left(\frac{1}{2}\right)= 0. 
\end{eqnarray}
Therefore, by using (\ref{P1Eq36}) we have
\begin{eqnarray}\label{P1Eq68}
\left(\alpha_{1}-\alpha_{0}\right)''+k\left(\alpha_{1}-\alpha_{0}\right)&&=\frac{\alpha_{0}^{2}}{8t^{2}}+\frac{\lambda}{2}-\alpha_{0}'',~\mbox{for}~t\in \left(0,\frac{1}{2}\right],\\
&&\geq 0.
\end{eqnarray}
Again,
\begin{eqnarray}
\lim_{t\rightarrow 0}\frac{(\alpha_{1}-\alpha_{0})(t)}{\sqrt{t}}=0~\mbox{and}~ (\alpha_{1}-\alpha_{0})\left(\frac{1}{2}\right)\geq 0. 
\end{eqnarray}
Hence by proposition \ref{P1Proposition311}, we have $\alpha_{1}\leq \alpha_{0}$. So our assumptions are true for $n=0$. Let our assumptions be true up to $n=m.$ So, we have
\begin{equation}
\alpha_{n} ~\mbox{is a lower solution of Problem 1}~\mbox{and}~\alpha_{n+1}\leq \alpha_{n}~\mbox{for} ~n=1,2,\cdots,m.
\end{equation}
Now, for $n+1$ we have
\begin{eqnarray}\label{P1Eq70}
\alpha_{n+1}''-\frac{\alpha_{n+1}^{2}}{8t^{2}}-\frac{\lambda}{2}&&=\frac{\alpha_{n}^{2}-\alpha_{n+1}^{2}}{8t^{2}}+k\left(\alpha_{n}-\alpha_{n+1}\right),~\mbox{for}~t\in \left(0,\frac{1}{2}\right],\\
&&\leq\left(\frac{1}{8} \frac{\alpha_{n}+\alpha_{n+1}}{t}+kt\right)\left(\frac{\alpha_{n}-\alpha_{n+1}}{t}\right),\\
&&\leq 0.
\end{eqnarray}
Therefore,
\begin{eqnarray}\label{P1Eq76}
\alpha_{n+1}''\leq \frac{\alpha_{n+1}^{2}}{8t^{2}}+\frac{\lambda}{2},~\mbox{for}~t\in \left(0,\frac{1}{2}\right],
\end{eqnarray}
and 
\begin{eqnarray}
\lim_{t\rightarrow 0}\frac{\alpha_{n+1}(t)}{\sqrt{t}}=0,~ \alpha_{n+1}\left(\frac{1}{2}\right)\leq 0. 
\end{eqnarray}
Hence, we have $\alpha_{n+1}$ is a lower solution of Problem $1$.
Therefore, by using (\ref{P1Eq76}), (\ref{P1Eq40}) and (\ref{P1Eq41}), we have
\begin{eqnarray}\label{P1Eq73}
\left(\alpha_{n+2}-\alpha_{n+1}\right)''+k\left(\alpha_{n+2}-\alpha_{n+1}\right)&&=\frac{\alpha_{n+1}^{2}}{8t^{2}}+\frac{\lambda}{2}-\alpha_{n+1}'',~\mbox{for}~t\in \left(0,\frac{1}{2}\right],\\
&&\geq 0.
\end{eqnarray}
and 
\begin{eqnarray}
\lim_{t\rightarrow 0}\frac{(\alpha_{n+2}-\alpha_{n+1})(t)}{\sqrt{t}}=0,~ (\alpha_{n+2}-\alpha_{n+1})\left(\frac{1}{2}\right)\geq 0. 
\end{eqnarray}
Therefore by proposition \ref{P1Proposition311}, we have $\alpha_{n+2}\leq \alpha_{n+1}$. Hence by mathematical induction we conclude that
\begin{equation}
\alpha_{n} ~\mbox{is a lower solution of Problem 1}~\mbox{and}~\alpha_{n+1}\leq \alpha_{n}~\forall n\in \mathbb{N}.
\end{equation}
In the last part of the proof, we want to show $\beta_{n}\leq \alpha_{n}$ for all $n\in \mathbb{N}$. Again from (\ref{P1Eq61}) and (\ref{P1Eq76}), we have
\begin{eqnarray}\label{P1Eq74}
\left(\beta_{n+1}-\alpha_{n+1}\right)''+k\left(\beta_{n+1}-\alpha_{n+1}\right)&&=\frac{\beta_{n}^{2}}{8t^{2}}+\frac{\lambda}{2}+k\left(\beta_{n}-\alpha_{n}\right)-\alpha_{n}'',~\mbox{for}~t\in \left(0,\frac{1}{2}\right],\\
&&\geq \left(\frac{\beta_{n}+\alpha_{n}}{8t}+kt\right)\left(\frac{\beta_{n}-\alpha_{n}}{t}\right).
\end{eqnarray}
Since $\beta_{n}\leq \alpha_{n}\leq0$, therefore we have 
\begin{equation}
\left(\beta_{n+1}-\alpha_{n+1}\right)''+k\left(\beta_{n+1}-\alpha_{n+1}\right)\geq 0,
\end{equation} 
and
\begin{eqnarray}
\label{P1Eq75}
&& \lim_{t\rightarrow 0}\frac{\beta_{n+1}-\alpha_{n+1}(t)}{\sqrt{t}}=0,~ \left(\beta_{n+1}-\alpha_{n+1}\right)\left(\frac{1}{2}\right)\geq 0. 
\end{eqnarray}
Hence by proposition \ref{P1Proposition311}, we have $\beta_{n+1}\leq \alpha_{n+1}$. Finally we have 
\begin{equation}\label{P1Eq001}
\beta=\beta_{0}\leq \beta_{1}\leq \cdots\leq \beta_{n}\leq\cdots\leq\alpha_{n}\leq\cdots\leq\alpha_{1}\leq\alpha_{0}=0.
\end{equation}
Let $t_{n}\in \left(0,\frac{1}{2}\right)$ for $n\in \mathbb{N}$ such that
\begin{equation}
t_{n+1}<t_{n}~\mbox{for $n\in \mathbb{N}$},~\lim_{n\rightarrow +\infty}t_{n}=0.
\end{equation}
Therefore, for every $n\in \mathbb{N}$ there exists  a solution $\alpha_{n}$ and $\beta_{n}$ to equations (\ref{P1Eq40}), (\ref{P1Eq41}) and (\ref{P1Eq42}), (\ref{P1Eq43}) respectively  satisfy the inequality (\ref{P1Eq001}) on the interval $[t_{n},\frac{1}{2}]$. Since $\lbrace\alpha_{n}\rbrace$ and $\lbrace \beta_{n} \rbrace$ are monotone and bounded, therefore they converge to function $u(t)$ and $v(t)$ respectively. Therefore, by Dini's theorem we have,  there exists  $u(t) $ and $v(t)$  such that 
\begin{equation}
\lim_{n\rightarrow \infty} \alpha_{n}=u~\mbox{and}~\lim_{n\rightarrow \infty} \beta_{n}=v~\mbox{uniformly on every compact interval $\left[t_n,\frac{1}{2}\right]$ of $\left(0, \frac{1}{2}\right]$}.
\end{equation}
Hence, from (\ref{P1Eq40}), (\ref{P1Eq41}), (\ref{P1Eq42}), (\ref{P1Eq43}) and (\ref{P10002}), we have there exists solutions $v(t)\in C^{2}_{loc}\left((0,\frac{1}{2}],\mathbb{R}\right)$ and $u(t)\in C^{2}_{loc}\left((0,\frac{1}{2}],\mathbb{R}\right)$ to Problem $1$ satisfying
\begin{equation}
\beta\leq u \leq v \leq \alpha_{0}=0,~\forall t\in \left(0,\frac{1}{2}\right].
\end{equation}
 Hence the proof is complete.
\end{proof}
\begin{theorem}\label{P1Theorem2}
Assume $k<0$, $\lambda \in \mathbb{R}$ and there exist $\alpha_{0}$ and $\beta_{0}\in C^{2}_{loc}\left((0,\frac{1}{2}],\mathbb{R}\right)$ are lower and upper solutions of Problem $2$ which satisfy the properties $P_1$ and $P_2$ such that $\beta_{0}\leq \alpha_{0}=0$, then the  Problem $2$ has at least one solution in the region $D_{0}$ and the sequences $\lbrace\alpha_{n}\rbrace$, $\lbrace\beta_{n}\rbrace$ defined by (\ref{P1Eq40}), (\ref{P1Eq41}) and (\ref{P1Eq42}), (\ref{P1Eq43}) converges to a solutions $u$, $v$  uniformly as well as monotonically respectively, such that 
\begin{equation}
\beta\leq u \leq v \leq \alpha=0,~\forall t\in \left(0,\frac{1}{2}\right].
\end{equation}
\end{theorem}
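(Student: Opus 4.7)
The plan is to adapt the three-stage monotone-iteration argument used for Theorem \ref{P1Theorem1} with essentially no structural change, since the anti-maximum principle Proposition \ref{P1Proposition311} already covers Problem $2(a)$ when $k<0$, and the Green's function for Problem $2(a)$ is supplied by Lemma \ref{P1Lemma9}. The only modifications are in the verification of the right-endpoint boundary conditions: the pointwise inequalities at $t=\tfrac{1}{2}$ that appear in Theorem \ref{P1Theorem1} are replaced by inequalities on the derivative at $t=\tfrac{1}{2}$, while the ODE computations remain word-for-word identical because the linear equation in (\ref{P1Eq40}) and (\ref{P1Eq42}) is common to both problems.

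In Stage one, I induct on $n$ to show that each $\beta_n$ is an upper solution of Problem $2$, that $\{\beta_n\}$ is nondecreasing, and that $\beta_{n+1}\leq \alpha_0$. The base case mirrors (\ref{P1Eq52})--(\ref{P1Eq53}): apply Proposition \ref{P1Proposition311} to $\beta_0-\beta_1$, using the boundary verification $(\beta_0-\beta_1)'(\tfrac{1}{2})=\beta_0'(\tfrac{1}{2})-0\geq 0$ (since $\beta_0$ is an upper solution of Problem $2$ and $\beta_1'(\tfrac{1}{2})=0$), and $(\beta_1-\alpha_0)'(\tfrac{1}{2})=-\alpha_0'(\tfrac{1}{2})\geq 0$ for the comparison with $\alpha_0$. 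The inductive step uses the same factoring trick as in (\ref{P1Eq54}) and (\ref{P1Eq65}): one rewrites the right-hand side as the product $\left(\tfrac{\beta_n+\beta_{n+1}}{8t}+kt\right)\left(\tfrac{\beta_n-\beta_{n+1}}{t}\right)$, whose sign is controlled by the inductive hypothesis. In Stage two, the symmetric induction shows each $\alpha_n$ is a lower solution of Problem $2$ and $\{\alpha_n\}$ is nonincreasing; the boundary adjustments are $(\alpha_1-\alpha_0)'(\tfrac{1}{2})=-\alpha_0'(\tfrac{1}{2})\geq 0$ at the base and $(\alpha_{n+2}-\alpha_{n+1})'(\tfrac{1}{2})=0$ in the induction. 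In Stage three, $\beta_{n+1}\leq \alpha_{n+1}$ follows from $\beta_n\leq \alpha_n\leq 0$ by applying Proposition \ref{P1Proposition311} to $\beta_{n+1}-\alpha_{n+1}$, with boundary term $(\beta_{n+1}-\alpha_{n+1})'(\tfrac{1}{2})=0$. Concatenating the three stages produces the chain $\beta=\beta_0\leq \cdots\leq \beta_n\leq \alpha_n\leq \cdots\leq \alpha_0=0$.

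To conclude, monotonicity and boundedness yield pointwise limits $v=\lim_n \beta_n$ and $u=\lim_n \alpha_n$; Dini's theorem upgrades this to uniform convergence on every compact subinterval $[t_n,\tfrac{1}{2}]$ of $(0,\tfrac{1}{2}]$, and passing to the limit in the integral representation furnished by Lemma \ref{P1Lemma9} shows that $u$ and $v$ solve Problem $2$ in $D_0$. The main obstacle is not conceptual but bookkeeping: one must consistently differentiate the right-endpoint data before checking signs, so that Proposition \ref{P1Proposition311} is invoked with the Neumann-type hypothesis $w'(\tfrac{1}{2})\geq 0$ rather than the Dirichlet-type $w(\tfrac{1}{2})\geq 0$ used in Theorem \ref{P1Theorem1}. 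Once this is handled uniformly throughout the induction, the argument is a line-by-line parallel of the preceding proof.
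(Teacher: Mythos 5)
Your proposal is correct and follows exactly the route the paper intends: the paper's own proof of this theorem is simply the statement that it is the same as the proof of Theorem \ref{P1Theorem1}, and your careful replacement of the Dirichlet-type endpoint checks by the Neumann-type checks $w'\left(\tfrac{1}{2}\right)\geq 0$, together with the unchanged use of Proposition \ref{P1Proposition311} and Lemma \ref{P1Lemma9}, is precisely the adaptation that is left implicit there.
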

\begin{proof}
The proof is same as in Theorem \ref{P1Theorem1}.
\end{proof}
\begin{theorem}\label{P1theorem3}
Assume $k<0$, $\lambda \in \mathbb{R}$, $\left(\sqrt{|k|}\cosh\left(\frac{\sqrt{|k|}}{2}\right)-\sinh\left(\frac{\sqrt{|k|}}{2}\right)\right)>0$ and there exist $\alpha_{0}$ and $\beta_{0}\in C^{2}_{loc}\left((0,\frac{1}{2}],\mathbb{R}\right)$ are lower and upper solutions of Problem $3$ which satisfy the properties $P_1$ and $P_2$ such that $\beta_{0}\leq \alpha_{0}=0$, then the  Problem $3$ has at least one solution in the region $D_{0}$ and the sequences $\lbrace\alpha_{n}\rbrace$, $\lbrace\beta_{n}\rbrace$ defined by (\ref{P1Eq40}), (\ref{P1Eq41}) and (\ref{P1Eq42}), (\ref{P1Eq43}) converges to a solutions $u$, $v$  uniformly as well as monotonically respectively, such that 
\begin{equation}
\beta\leq u \leq v \leq \alpha=0,~\forall t\in \left(0,\frac{1}{2}\right].
\end{equation}
\end{theorem}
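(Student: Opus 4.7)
The plan is to mimic the proof of Theorem \ref{P1Theorem1} verbatim in structure, replacing the anti-maximum principle for Problem $1(a)$ with the version for Problem $3(a)$ (the proposition requiring $\sqrt{|k|}\cosh(\sqrt{|k|}/2)-\sinh(\sqrt{|k|}/2)>0$), and replacing Lemma \ref{P1Lemma8} by Lemma \ref{P1Lemma10} so that the iterates $\alpha_{n+1},\beta_{n+1}$ can be represented as integrals against the non-positive Green's function
\[
u(t)=\int_{0}^{1/2}G(s,t)\,h(s)\,ds
\]
whose sign is exactly what makes the comparison arguments work. The hypothesis $\sqrt{|k|}\cosh(\sqrt{|k|}/2)-\sinh(\sqrt{|k|}/2)>0$ is used precisely to ensure $G(s,t)\leq 0$, which is the substitute for the analogous sign statement used in Theorem \ref{P1Theorem1}.

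I would carry out the argument in three inductive stages, exactly as in Theorem \ref{P1Theorem1}. \textbf{Stage 1:} Show by induction on $n$ that each $\beta_{n}$ is an upper solution of Problem $3$, that $\beta_{n}\leq \beta_{n+1}$, and that $\beta_{n+1}\leq\alpha_{0}=0$. The base case uses (\ref{P1Eq42})--(\ref{P1Eq43}) together with the upper-solution inequality (\ref{P1Eq37}) to derive
\[
(\beta_{0}-\beta_{1})''+k(\beta_{0}-\beta_{1})\geq 0
\]
with the Problem $3$ boundary conditions $\beta_{0}(1/2)-\beta_{1}(1/2)\geq \beta_{0}'(1/2)-\beta_{1}'(1/2)$ and $\lim_{t\to 0}(\beta_{0}-\beta_{1})(t)/\sqrt{t}=0$; the anti-maximum principle for Problem $3(a)$ then gives $\beta_{0}\leq\beta_{1}$. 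The inductive step rewrites
\[
\beta_{n+1}''-\tfrac{\beta_{n+1}^{2}}{8t^{2}}-\tfrac{\lambda}{2}
=\Bigl(\tfrac{\beta_{n}+\beta_{n+1}}{8t}+kt\Bigr)\Bigl(\tfrac{\beta_{n}-\beta_{n+1}}{t}\Bigr),
\]
which is $\geq 0$ by the previous step together with $\beta_{n}\leq\beta_{n+1}\leq 0$ and $k<0$, showing $\beta_{n+1}$ is an upper solution; the remaining two assertions follow by the same Green's-function/anti-maximum argument applied to $\beta_{n+1}-\beta_{n+2}$ and $\beta_{n+2}-\alpha_{0}$. \textbf{Stage 2:} Run the dual induction for the lower sequence, showing $\alpha_{n}$ is a lower solution and $\alpha_{n+1}\leq\alpha_{n}$, using the reverse sign in the corresponding identity. \textbf{Stage 3:} Show $\beta_{n}\leq\alpha_{n}$ inductively via
\[
(\beta_{n+1}-\alpha_{n+1})''+k(\beta_{n+1}-\alpha_{n+1})
\geq\Bigl(\tfrac{\beta_{n}+\alpha_{n}}{8t}+kt\Bigr)\Bigl(\tfrac{\beta_{n}-\alpha_{n}}{t}\Bigr)\geq 0,
\]
combined with the Problem $3$ boundary inequality at $t=1/2$.

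Once the monotone chain $\beta=\beta_{0}\leq\cdots\leq\beta_{n}\leq\cdots\leq\alpha_{n}\leq\cdots\leq\alpha_{0}=0$ is in hand, I would extract pointwise monotone limits $u=\lim\alpha_{n}$ and $v=\lim\beta_{n}$, upgrade to uniform convergence on every compact $[t_{n},1/2]\subset(0,1/2]$ via Dini's theorem, and then pass to the limit inside the Green's-function representation from Lemma \ref{P1Lemma10} to conclude that $u$ and $v$ solve Problem $3(a)$ with the particular choice $h(t)=u^{2}/(8t^{2})+\lambda/2+ku$, which is exactly Problem $3$. The behavior at $t=0$ is handled by property $P_{1}$ and Corollary \ref{P1Corollary1}, which let us upgrade $\lim_{t\to 0}u(t)/\sqrt{t}=0$ to $\lim_{t\to 0^{+}}\sqrt{t}u'(t)=0$.

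The main obstacle I anticipate is the Stage 1/Stage 3 boundary bookkeeping at $t=1/2$: for Problem $3$ the condition is $u(1/2)=u'(1/2)$ rather than $u(1/2)=0$, so the sign of the boundary difference $(\beta_{0}-\beta_{1})(1/2)-(\beta_{0}-\beta_{1})'(1/2)$ must be verified carefully from the definition of upper solution, and this is where the extra hypothesis $\sqrt{|k|}\cosh(\sqrt{|k|}/2)-\sinh(\sqrt{|k|}/2)>0$ enters to guarantee that the associated anti-maximum principle still delivers non-positivity of the difference. Everything else is a direct transcription of the Problem $1$ argument, so I would present only the first inductive step in detail and remark that the remaining cases are identical modulo the boundary condition at $t=1/2$.
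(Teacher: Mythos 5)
Your proposal is correct and follows exactly the route the paper intends: the paper's own proof of this theorem is literally ``The proof is same as in Theorem \ref{P1Theorem1},'' and your adaptation --- swapping in the Problem $3(a)$ Green's function of Lemma \ref{P1Lemma10} and the corresponding anti-maximum principle, whose non-positivity is precisely what the hypothesis $\sqrt{|k|}\cosh\left(\frac{\sqrt{|k|}}{2}\right)-\sinh\left(\frac{\sqrt{|k|}}{2}\right)>0$ guarantees --- is the intended substitution. Your attention to the boundary bookkeeping at $t=\frac{1}{2}$ for the condition $u\left(\frac{1}{2}\right)=u'\left(\frac{1}{2}\right)$ is more careful than what the paper records.
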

\begin{proof}
The proof is same as in Theorem \ref{P1Theorem1}.
\end{proof}
\begin{theorem}
 Let  $\alpha_{0}$, $\beta_{0}\in C^{2}_{loc}\left((0,\frac{1}{2}],\mathbb{R}\right)$ are the lower and upper solutions of Problem $1$ which satisfy the properties $P_1$ and $P_2$ such that $\beta_{0}\leq \alpha_{0}$. Assume $0<k<k'$, where $\displaystyle k'=\min\left \lbrace  4\pi^{2}, -\max_{t\in (0, \frac{1}{2}]}\frac{\alpha_{0}}{2t}\right \rbrace$ and $\lambda \in \mathbb{R}$. Then  the  Problem $1$ has at least one solution in the region $D_{0}$ and the sequences $\lbrace\alpha_{n}\rbrace$, $\lbrace\beta_{n}\rbrace$ defined by (\ref{P1Eq40}), (\ref{P1Eq41}) and (\ref{P1Eq42}), (\ref{P1Eq43}) converges to a solutions $u$, $v$  uniformly as well as monotonically respectively, such that 
\begin{equation}
\beta\leq u \leq v \leq \alpha,~\forall t\in \left(0,\frac{1}{2}\right].
\end{equation}
\end{theorem}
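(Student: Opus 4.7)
The plan is to mirror the three-part structure of Theorem \ref{P1Theorem1}, replacing Proposition \ref{P1Proposition311} (anti-maximum for $k<0$) with Proposition \ref{P1Proposition3110} (anti-maximum for $0<k<4\pi^{2}$) and the associated Green's function from Lemma \ref{P1Lemma8000}. Concretely, I would argue by induction on $n$ that: firstly $\beta_{n}$ is an upper solution of Problem $1$ with $\beta_{n}\leq\beta_{n+1}\leq\alpha_{0}$; secondly $\alpha_{n}$ is a lower solution with $\alpha_{n+1}\leq\alpha_{n}$; and thirdly $\beta_{n}\leq\alpha_{n}$. In each step I compute $(\phi-\psi)''+k(\phi-\psi)$ for the relevant pair using the defining recursions (\ref{P1Eq40})--(\ref{P1Eq43}) and conclude the pointwise inequality by invoking Proposition \ref{P1Proposition3110}; this is exactly the replacement of the sign-preserving role played by Proposition \ref{P1Proposition311} in the $k<0$ case.

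The algebraic identity that drives every comparison is
\begin{equation*}
\frac{\phi^{2}-\psi^{2}}{8t^{2}}+k(\phi-\psi)\;=\;\left(\frac{\phi+\psi}{8t^{2}}+k\right)(\phi-\psi),
\end{equation*}
and the hypotheses of Proposition \ref{P1Proposition3110} require the right-hand side to be nonnegative. In Theorem \ref{P1Theorem1} both factors were automatically nonpositive because $k<0$ and all iterates lay in $[\beta_{0},0]$. For $0<k<k'$ the positive constant $k$ competes against $(\phi+\psi)/(8t^{2})$, so the first factor is no longer automatically nonpositive. This is the main new obstacle and the reason the extra restriction appears: using $\phi,\psi\leq\alpha_{0}\leq 0$ one gets $\phi+\psi\leq 2\alpha_{0}$, and the condition $k<-\max_{t\in(0,1/2]}\alpha_{0}/(2t)$ is designed precisely to force $\tfrac{\phi+\psi}{8t^{2}}+k\leq 0$ uniformly on $(0,1/2]$ throughout the iteration. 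Verifying this inequality at each induction step is the only nontrivial calculation; the boundary and singular limit conditions propagate from $\alpha_{0},\beta_{0}$ through (\ref{P1Eq41}) and (\ref{P1Eq43}) exactly as before, since property $P_{1}$ is preserved by the recursion.

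Once the three monotonicity claims are proved, the sandwich
\begin{equation*}
\beta=\beta_{0}\leq\beta_{1}\leq\cdots\leq\beta_{n}\leq\cdots\leq\alpha_{n}\leq\cdots\leq\alpha_{1}\leq\alpha_{0}=\alpha
\end{equation*}
holds on $(0,1/2]$. Restricting to any compact $[t_{n},1/2]\subset(0,1/2]$, the sequences are monotone and bounded, hence Dini's theorem gives uniform convergence to limits $v(t)$ and $u(t)$ on each such compact. I then pass to the limit in the integral representation from Lemma \ref{P1Lemma8000}: property $P_{2}$ gives continuity of $h(t,\cdot)$ on $D_{0}$, and the sandwich yields a uniform $L^{1}$-dominant for the integrand, so the limit enters the integral and $u,v\in C^{2}_{loc}\left((0,\tfrac{1}{2}],\mathbb{R}\right)$ satisfy the integral equation, hence solve Problem $1$ pointwise. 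This produces the stated solutions with $\beta\leq u\leq v\leq\alpha$ and closes the argument.
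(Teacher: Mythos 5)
Your proposal is correct and follows essentially the same route as the paper, whose own proof of this statement is just the one-line remark that it is the same as Theorem \ref{P1Theorem1}. You correctly supply the only substantive modification — replacing Proposition \ref{P1Proposition311} by Proposition \ref{P1Proposition3110} (with the Green's function of Lemma \ref{P1Lemma8000}) and using the bound $k<-\max_{t\in(0,1/2]}\alpha_{0}/(2t)$ together with $\phi+\psi\leq 2\alpha_{0}$ to keep the factor $\frac{\phi+\psi}{8t^{2}}+k$ nonpositive — which is exactly what makes the comparison steps go through for $k>0$.
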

\begin{proof}
The proof is same as in Theorem \ref{P1Theorem1}.
\end{proof}
\begin{theorem}
Let  $\alpha_{0}$, $\beta_{0}\in C^{2}_{loc}\left((0,\frac{1}{2}],\mathbb{R}\right)$ are  the lower and upper solutions of Problem $2$ which satisfy the properties $P_1$ and $P_2$ such that $\beta_{0}\leq \alpha_{0}$. Assume $0<k<k'$, where $\displaystyle k'=\min\left \lbrace  \pi^{2}, -\max_{t\in (0, \frac{1}{2}]}\frac{\alpha_{0}}{2t}\right \rbrace$ and $\lambda \in \mathbb{R}$. Then the Problem $2$ has at least one solution in the region $D_{0}$ and the sequences $\lbrace\alpha_{n}\rbrace$, $\lbrace\beta_{n}\rbrace$ defined by (\ref{P1Eq40}), (\ref{P1Eq41}) and (\ref{P1Eq42}), (\ref{P1Eq43}) converges to a solutions $u$, $v$  uniformly as well as monotonically respectively, such that 
\begin{equation}
\beta\leq u \leq v \leq \alpha,~\forall t\in \left(0,\frac{1}{2}\right].
\end{equation}
\end{theorem}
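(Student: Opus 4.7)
The plan is to mirror the three-step induction of Theorem \ref{P1Theorem1} verbatim, with two substitutions: replace the hyperbolic Green's function (Lemma \ref{P1Lemma9}) by the trigonometric Green's function of Lemma \ref{P1Lemma9000} (which is well-defined and non-positive precisely because $0<k<\pi^2$), and replace the anti-maximum principle from Proposition \ref{P1Proposition311} by the positive-$k$ version, Proposition \ref{P1Proposition3110}, for Problem $2(a)$. With these tools the iteration (\ref{P1Eq40})--(\ref{P1Eq43}) makes sense and produces $\alpha_{n+1},\beta_{n+1}\in C^{2}_{loc}\left((0,\tfrac12],\mathbb{R}\right)$ via the representation $w(t)=\int_0^{1/2}G(s,t)h(s)\,ds$.

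The induction then proceeds in the three standard stages: (i) $\beta_{n}$ is an upper solution, $\beta_{n}\le\beta_{n+1}$, and $\beta_{n+1}\le\alpha_{0}$; (ii) $\alpha_{n}$ is a lower solution and $\alpha_{n+1}\le\alpha_{n}$; and (iii) $\beta_{n}\le\alpha_{n}$. In each stage, the key algebraic identity is the same as in Theorem \ref{P1Theorem1}, for instance
\begin{equation*}
\frac{\alpha_{n}^{2}-\alpha_{n+1}^{2}}{8t^{2}}+k(\alpha_{n}-\alpha_{n+1})=\left(\frac{\alpha_{n}+\alpha_{n+1}}{8t}+kt\right)\frac{\alpha_{n}-\alpha_{n+1}}{t},
\end{equation*}
and analogous identities with $\alpha$ replaced by $\beta$ or $(\beta,\alpha)$. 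The boundary data at $0$ and at $t=\tfrac12$ (the Neumann condition $u'(\tfrac12)=0$) transform to the correct inequalities by the definition of upper/lower solutions, exactly as in Theorem \ref{P1Theorem1}.

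The genuinely new point, and the main obstacle compared with the $k<0$ case, is the sign of the factor $\frac{p+q}{8t}+kt$ when $k>0$. Now $kt>0$, so one cannot read the sign off automatically from $p,q\le 0$. To handle this, observe that at every induction step the quantities $p,q$ are bounded above by $\alpha_{0}$ (both $\alpha_{n},\alpha_{n+1}\le\alpha_{0}$, and $\beta_{n},\beta_{n+1}\le\alpha_{0}$ by part~(i) of the induction), so $p+q\le 2\alpha_{0}$ and hence
\begin{equation*}
\frac{p+q}{8t}+kt\le\frac{\alpha_{0}(t)}{4t}+kt=\frac{1}{4t}\bigl(\alpha_{0}(t)+4kt^{2}\bigr)\le\frac{1}{4t}\bigl(\alpha_{0}(t)+2kt\bigr),
\end{equation*}
where the last inequality uses $t\le\tfrac12$. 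The hypothesis $k<-\max_{t\in(0,1/2]}\frac{\alpha_{0}}{2t}$ is exactly what makes $\alpha_{0}(t)+2kt\le 0$ for every $t\in(0,\tfrac12]$, so the whole factor is non-positive. Combining this with the monotonicity inequalities $p-q\gtrless 0$ from the inductive hypothesis yields the required sign on $(\alpha_{n+1}-\alpha_{n})''+k(\alpha_{n+1}-\alpha_{n})$, etc., and Proposition \ref{P1Proposition3110} then delivers the desired comparison between iterates.

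Once the chain $\beta=\beta_{0}\le\beta_{1}\le\cdots\le\beta_{n}\le\alpha_{n}\le\cdots\le\alpha_{1}\le\alpha_{0}$ is established, convergence follows exactly as in Theorem \ref{P1Theorem1}: the sequences are pointwise monotone and bounded on every compact $[t_{n},\tfrac12]\subset(0,\tfrac12]$, Dini's theorem upgrades pointwise to uniform convergence on such compacts, and one passes to the limit in the integral representation with the Green's function of Lemma \ref{P1Lemma9000}; dominated convergence applies because $h(t,u)=\frac{u^{2}}{8t^{2}}+\frac{\lambda}{2}+ku$ is continuous on $D_{0}$ by property $P_{2}$ and controlled near $0$ via $\lim_{t\to 0^{+}}|\alpha_{0}(t)|/t<\infty$ and $\lim_{t\to 0^{+}}|\beta_{0}(t)|/t<\infty$ (property $P_{1}$) together with Lemma \ref{P5lemma50}. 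The two limits $u=\lim\alpha_{n}$ and $v=\lim\beta_{n}$ are therefore solutions of Problem $2$ in $D_{0}$ with $\beta\le u\le v\le\alpha$, completing the proof.
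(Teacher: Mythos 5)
Your proposal is correct and follows essentially the same route as the paper, whose own proof of this statement is just the one line ``same as Theorem 3.3.1'' (the monotone iteration with the Green's function and anti-maximum principle swapped for their $0<k<\pi^{2}$ versions). In fact you supply the one detail the paper leaves implicit, namely that the hypothesis $k<-\max_{t\in(0,1/2]}\frac{\alpha_{0}}{2t}$ is exactly what keeps the factor $\frac{p+q}{8t}+kt$ non-positive when $k>0$, which is the only step that differs from the $k<0$ case.
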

\begin{proof}
The proof is same as in Theorem \ref{P1Theorem1}.
\end{proof}
\begin{theorem}
Let $\alpha_{0}$, $\beta_{0}\in C^{2}_{loc}\left((0,\frac{1}{2}],\mathbb{R}\right)$ are the lower and upper solutions of Problem $3$ which satisfy the properties $P_1$ and $P_2$ such that $\beta_{0}\leq \alpha_{0}$. Assume $\lambda \in \mathbb{R}$, $\sqrt{k}\cos\left(\frac{\sqrt{k}}{2}\right)-\sin\left(\frac{\sqrt{k}}{2}\right)>0$ and $0<k<k'$, where $\displaystyle k'=\min\left \lbrace  \frac{\pi^{2}}{4}, -\max_{t\in (0, \frac{1}{2}]}\frac{\alpha_{0}}{2t}\right \rbrace$. Then the  Problem $3$ has at least one solution in the region $D_{0}$ and the sequences $\lbrace\alpha_{n}\rbrace$, $\lbrace\beta_{n}\rbrace$ defined by (\ref{P1Eq40}), (\ref{P1Eq41}) and (\ref{P1Eq42}), (\ref{P1Eq43}) converges to a solutions $u$, $v$  uniformly as well as monotonically respectively, such that 
\begin{equation}
\beta\leq u \leq v \leq \alpha,~\forall t\in \left(0,\frac{1}{2}\right].
\end{equation}
\end{theorem}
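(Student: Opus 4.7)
The plan is to mirror the three-part induction of Theorem \ref{P1Theorem1}, replacing the tools tailored to $k<0$ by their positive-$k$ analogues for Problem~3. Since $\sqrt{k}\cos(\sqrt{k}/2)-\sin(\sqrt{k}/2)>0$ and $0<k\leq \pi^2/4$, Lemma \ref{P1Lemma10000} provides a non-positive Green's function for Problem~$3(a)$ and the corresponding anti-maximum principle (the last proposition of Section~3.2) applies: every solution of Problem~$3(a)$ with $h\geq 0$ is non-positive. This is the only comparison tool needed to order successive iterates, so formally the argument will closely follow that of Theorem~\ref{P1Theorem1}.

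First I would carry out the same three inductive claims, in the same order: that $\beta_n$ is an upper solution of Problem~3 with $\beta_n\leq\beta_{n+1}\leq\alpha_0$ for every $n$; that $\alpha_n$ is a lower solution with $\alpha_{n+1}\leq\alpha_n$; and that $\beta_n\leq\alpha_n$ for every $n$. In each inductive step the difference equation rearranges into the form
\begin{equation*}
\beta_{n+1}''-\frac{\beta_{n+1}^2}{8t^2}-\frac{\lambda}{2}=(\beta_n-\beta_{n+1})\left[\frac{\beta_n+\beta_{n+1}}{8t^2}+k\right],
\end{equation*}
with the analogous identity for the $\alpha$-sequence. The boundary contributions at $t=0^+$ and at $t=\tfrac{1}{2}$ are non-negative thanks to property $P_1$ and the Problem~3 boundary data, and the anti-maximum principle then converts each pointwise inequality into the desired comparison between consecutive iterates.

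The main obstruction, compared with the $k<0$ case treated in Theorem~\ref{P1theorem3}, is the sign verification in the display above: the right-hand side must be non-negative. Because $\beta_n\leq\beta_{n+1}\leq\alpha_0\leq 0$, the first factor is non-positive, so it suffices to bound the bracket by
\begin{equation*}
\frac{\beta_n+\beta_{n+1}}{8t^2}+k\ \leq\ \frac{\alpha_0}{4t^2}+k,
\end{equation*}
and to force this to be non-positive on $(0,\tfrac{1}{2}]$. This is precisely where the hypothesis $k<-\max_{t\in(0,1/2]}\alpha_0/(2t)$ is consumed: combining $2kt+\alpha_0<0$ with $2t\leq 1$ and $\alpha_0\leq 0$ yields $4kt^{2}+\alpha_0\leq (1-2t)\alpha_0\leq 0$, which is exactly the required estimate. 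The $\alpha$-analogue is identical, with $\alpha_n,\alpha_{n+1}\leq\alpha_0$ playing the role of $\beta_n,\beta_{n+1}$, and I expect this to be the only step in the proof that is genuinely delicate.

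Once the monotone chain $\beta=\beta_0\leq\beta_1\leq\cdots\leq\alpha_n\leq\cdots\leq\alpha_0=\alpha$ is established on every compact subinterval $[t_n,\tfrac{1}{2}]\subset(0,\tfrac{1}{2}]$, the sequences are monotone and uniformly bounded, so Dini's theorem supplies uniform convergence on each such subinterval to limits $u,v\in C^2_{loc}((0,\tfrac{1}{2}],\mathbb{R})$. Passing to the limit in the integral representation supplied by Lemma \ref{P1Lemma10000} then identifies $u$ and $v$ as solutions of Problem~3 satisfying $\beta\leq u\leq v\leq \alpha$ on $(0,\tfrac{1}{2}]$, which completes the argument.
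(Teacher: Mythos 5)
Your proposal is correct and follows essentially the same route as the paper, whose proof of this theorem is simply a reference back to the monotone-iteration argument of Theorem \ref{P1Theorem1}; you reproduce that three-part induction with Lemma \ref{P1Lemma10000} and the positive-$k$ anti-maximum principle in place of Proposition \ref{P1Proposition311}. Your identification of where the hypothesis $k<-\max_{t}\alpha_{0}/(2t)$ is consumed (forcing $4kt^{2}+\alpha_{0}\leq 0$ so the bracket $\tfrac{\beta_{n}+\beta_{n+1}}{8t^{2}}+k$ stays non-positive) is exactly the detail the paper leaves implicit, and it is carried out correctly.
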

\begin{proof}
The proof is same as in Theorem \ref{P1Theorem1}.
\end{proof}
\section{Estimations of $\lambda$}\label{P5Estimation}
The objective of this section is to derive some qualitative bounds of the parameter $\lambda$, from which we can conclude about the nonexistence of  solutions. The equation (\ref{P1Eq1113}) can be  written as in the following form:
\begin{equation}\label{P1Eq100}
(tu'-u)'=\frac{u^{2}}{8t}+\frac{\lambda t}{2},~\forall t\in \left(0,\frac{1}{2}\right].
\end{equation}
Put $\displaystyle v(t)=-\frac{u(t)}{t}$ and integrating from $0$ to $t$,  the equation (\ref{P1Eq100}) becomes
\begin{equation}\label{P1Eq101}
v'(t)=-\frac{1}{8t^{2}}\int_{0}^{t}v^2(s)s~ds-\frac{\lambda}{4},~\forall t\in \left(0,\frac{1}{2}\right].
\end{equation}
Therefore, we have
\begin{equation}\label{P1Eq102}
v(t)\geq0,~\forall t\in \left(0,\frac{1}{2}\right].
\end{equation}
In view of the transformation,  the boundary condition at $r=1$ becomes 
\begin{equation}\label{P1Eq103}
\mbox{BC of Problem $1$:}~v\left(\frac{1}{2}\right)=0,
\end{equation}
\begin{equation}\label{P1Eq104}
\mbox{BC of Problem $2$:}~v\left(\frac{1}{2}\right)=-\frac{1}{2} v\left(\frac{1}{2}\right),
\end{equation}
and
\begin{equation}\label{P1Eq105}
\mbox{BC of Problem $3$:}~v\left(\frac{1}{2}\right)=- v\left(\frac{1}{2}\right).
\end{equation}
Carlos et. al. in \cite{Carlos2012} prove the following two lemmas. 
\begin{lemma}\label{P1Lemma11}
The set of numbers $\lambda\geq 0$, for which there exists a solution $u(t)\in C^{2}_{loc}\left((0,\frac{1}{2}],\mathbb{R}\right)$ of equation (\ref{P1Eq1113}) satisfying $\displaystyle \lim_{t\rightarrow 0}\frac{u(t)}{\sqrt{t}}=0$ and $u(t)\leq0$, is nonempty and bounded from above.
\end{lemma}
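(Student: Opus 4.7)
The plan is as follows. Nonemptiness is immediate: the zero function $u \equiv 0$ solves (\ref{P1Eq1113}) at $\lambda = 0$ and trivially satisfies both $u(t)/\sqrt{t} \to 0$ and $u(t) \leq 0$, so the real work lies in establishing an explicit a priori upper bound on $\lambda$.

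I would proceed by setting $w := -u \geq 0$; then $w \in C^2_{loc}((0,1/2], \mathbb{R})$ with $w(0^+) = 0$ (which follows at once from $u(t)/\sqrt{t} \to 0$), and the ODE rewrites as
$$-w''(t) = \frac{w^2(t)}{8t^2} + \frac{\lambda}{2} \geq \frac{\lambda}{2} > 0.$$
Thus $w$ is strictly concave, and being a nonnegative strictly concave function vanishing at $0$, it attains its maximum $M := w(t^*)$ at a unique $t^* \in (0, 1/2]$, with $w'(t^*) = 0$ if $t^* < 1/2$ and $w'(1/2) \geq 0$ otherwise.

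The bootstrap estimate comes from the concavity chord inequality $w(t) \geq Mt/t^*$ on $[0, t^*]$. Substituting this back into the ODE sharpens the lower bound on $-w''$ to $M^2/(8t^{*2}) + \lambda/2$ on $(0, t^*]$, and integrating twice (first from $s$ up to $t^*$ using $w'(t^*) = 0$ in the interior case, then from $0$ up to $t^*$) produces the key inequality
$$M \geq \frac{M^2}{16} + \frac{\lambda\, t^{*2}}{4}.$$
Since $M \mapsto M(1 - M/16)$ has maximum value $4$ on $\mathbb{R}_{\geq 0}$, this simultaneously forces $M \leq 16$ and $\lambda\, t^{*2} \leq 16$. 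On the complementary interval $[t^*, 1/2]$ I would use only the weaker inequality $-w'' \geq \lambda/2$ together with $w(1/2) \geq 0$ to deduce $M \geq \lambda(1/2 - t^*)^2/4$, hence $\lambda(1/2 - t^*)^2 \leq 4M \leq 64$. Adding the square-root forms $\sqrt{\lambda}\, t^* \leq 4$ and $\sqrt{\lambda}(1/2 - t^*) \leq 8$ gives $\sqrt{\lambda}/2 \leq 12$, so $\lambda \leq 576$, establishing boundedness from above.

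The main obstacle is the degenerate case $t^* = 1/2$, where $w$ is monotone up to the endpoint and $w'(1/2) \geq 0$ cannot be replaced by $w'(1/2) = 0$. A slight modification handles it: the chord inequality becomes $w(t) \geq 2Mt$ on all of $[0, 1/2]$, and a single integration that retains the $w'(1/2) \geq 0$ term yields $M \geq M^2/16 + \lambda/16$, so $\lambda \leq 16M - M^2 \leq 64$ in this case. A minor technical nuisance throughout is that $w'(0^+)$ may be $+\infty$ (the hypothesis only controls $w = o(\sqrt{t})$), but since $w' \geq 0$ on $[0, t^*]$ the improper integral $\int_0^{t^*} w'(s)\, ds = M$ is automatically well-defined, so this causes no genuine obstruction.
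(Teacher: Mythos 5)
Your argument is correct, and it is genuinely different from the one the paper relies on. Note first that the paper does not prove Lemma \ref{P1Lemma11} at all: it is quoted from \cite{Carlos2012} with the proof omitted. The closest in-paper model for how the authors handle such bounds is Lemma \ref{P1Lemma13}, which substitutes $v=-u/t$, passes to the integro-differential identity (\ref{P1Eq101}), derives the linear lower bound $v(t)\geq c(1-t)$ with $c=\tfrac12\int_0^{1/2}v^2s\,ds+\tfrac{\lambda}{4}$, and closes a quadratic inequality $\tfrac{11}{384}c^2-c+\tfrac{\lambda}{4}\leq 0$ whose discriminant bounds $\lambda$. You instead work directly with $w=-u$: concavity from $-w''\geq\lambda/2$, the chord bound $w(t)\geq Mt/t^*$ fed back into the nonlinearity, and two integrations yielding $M\geq M^2/16+\lambda t^{*2}/4$ on $[0,t^*]$, with the elementary estimate $\lambda(1/2-t^*)^2\leq 4M$ on the complementary interval; combining gives $\lambda\leq 576$, and your separate treatment of $t^*=1/2$ is also sound. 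The two approaches are close in spirit (both bootstrap a linear-in-$t$ lower bound for $-u$ through the quadratic nonlinearity to force a solvable quadratic inequality), but yours avoids the nonlocal reformulation entirely and, importantly, uses only $u(1/2)\leq 0$ rather than any boundary condition at $t=1/2$ — which is exactly right, since the lemma imposes none. The price is a cruder constant ($576$ versus, e.g., $384/11$ for the specific boundary value problems), but only boundedness is claimed. Two small points worth making explicit in a write-up: if $M=\max w=0$ then $w\equiv 0$ and the equation itself forces $\lambda=0$, so one may assume $M>0$; and the one-sided derivative $w'(1/2)$ exists because $C^{2}_{loc}\left(\left(0,\tfrac12\right]\right)$ includes the right endpoint. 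Neither affects the validity of the proof.
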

\begin{lemma}\label{P1Lemma12}
If the Problem $1$, Problem $2$ and Problem $3$ are solvable for some $\lambda_{0}\geq 0$, then these are solvable for every $0\leq \lambda\leq \lambda_{0}$.
\end{lemma}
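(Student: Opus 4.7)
The plan is to leverage the reverse-order lower and upper solution machinery developed in Section \ref{P5Existence} by using the solution for $\lambda_{0}$ as an upper solution and the zero function as a lower solution for each intermediate $\lambda \in [0,\lambda_{0}]$. Fix $\lambda \in [0,\lambda_{0}]$ (the endpoint case $\lambda = \lambda_{0}$ is given by hypothesis). Let $u_{\lambda_{0}}$ denote the solution to the problem under consideration (Problem $1$, $2$, or $3$) at the parameter value $\lambda_{0}$. I propose to set $\beta_{0} := u_{\lambda_{0}}$ and $\alpha_{0} :\equiv 0$.

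First I would verify that $\beta_{0}$ is an upper solution and $\alpha_{0}$ is a lower solution in the sense of Section \ref{P5Existence}. Since $u_{\lambda_{0}}$ satisfies the ODE at $\lambda_{0}$, we have
\begin{equation*}
\beta_{0}'' \;=\; \frac{\beta_{0}^{2}}{8t^{2}} + \frac{\lambda_{0}}{2} \;\geq\; \frac{\beta_{0}^{2}}{8t^{2}} + \frac{\lambda}{2},
\end{equation*}
because $\lambda_{0}\geq\lambda$, and the boundary data of $u_{\lambda_{0}}$ give the required upper-solution boundary inequalities (with equality) for each of Problems $1$, $2$, $3$. For $\alpha_{0}\equiv 0$ and $\lambda\geq 0$, the inequality $\alpha_{0}''=0\leq \lambda/2 = \alpha_{0}^{2}/(8t^{2})+\lambda/2$ holds, and all three boundary requirements for a lower solution ($\alpha_{0}(1/2)\leq 0$, $\alpha_{0}'(1/2)\leq 0$, $\alpha_{0}(1/2)\leq\alpha_{0}'(1/2)$) are trivially met. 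The ordering $\beta_{0}\leq\alpha_{0}=0$ is precisely the non-positivity of solutions, furnished by Lemma \ref{P1Lemma2} (Problem $1$), Lemma \ref{P1Lemma4} (Problem $2$), and Lemma \ref{P1Lemma3} (Problem $3$).

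Next I would check the hypotheses $P_{1}$ and $P_{2}$ required by Theorems \ref{P1Theorem1}, \ref{P1Theorem2}, \ref{P1theorem3}. Property $P_{1}$ for $\alpha_{0}\equiv 0$ is immediate. For $\beta_{0}=u_{\lambda_{0}}$, the limit $\lim_{t\to 0^{+}}u_{\lambda_{0}}(t)=0$ is given by Lemma \ref{P1Lemma1}, and $\lim_{t\to 0^{+}}|u_{\lambda_{0}}(t)|/t<\infty$ is exactly the content of Lemmas \ref{P1lemma5}, \ref{P1lemma6}, \ref{P1lemma7}. Property $P_{2}$ follows since $h(t,u)=u^{2}/(8t^{2})+\lambda/2+ku$ is manifestly continuous on $D_{0}$. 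Picking any $k<0$ (which automatically satisfies $\sqrt{|k|}\cosh(\sqrt{|k|}/2)-\sinh(\sqrt{|k|}/2)>0$ via $\tanh(x)<x$ for $x>0$, needed for Theorem \ref{P1theorem3}), the respective theorem furnishes a solution $u\in[\beta_{0},\alpha_{0}]\subset C^{2}_{loc}((0,1/2],\mathbb{R})$ to the problem at the parameter value $\lambda$.

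I do not expect a genuine obstacle here; the argument is essentially a verification that $u_{\lambda_{0}}$ and $0$ form an admissible reverse-ordered pair for every $\lambda\in[0,\lambda_{0}]$, after which the monotone-iteration theorems of Section \ref{P5Existence} deliver the conclusion. The only mild subtlety is that the upper and lower solution inequalities go in the reverse direction from the classical convention, but this is precisely the convention adopted throughout this section, so everything lines up once one notes that $\lambda\mapsto\lambda/2$ is monotone and that decreasing $\lambda$ relaxes the differential inequality satisfied by an existing solution in exactly the upper-solution direction.
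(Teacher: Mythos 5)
Your argument is correct: taking $\beta_{0}=u_{\lambda_{0}}$ and $\alpha_{0}\equiv 0$ as a reverse-ordered upper/lower pair for the problem at parameter $\lambda\leq\lambda_{0}$ (with the non-positivity lemmas supplying $\beta_{0}\leq\alpha_{0}$, Lemmas \ref{P1lemma5}--\ref{P1lemma7} supplying $P_{1}$, and the observation that $\tanh(\sqrt{|k|}/2)<\sqrt{|k|}$ makes Theorem \ref{P1theorem3} applicable for any $k<0$) is exactly the standard comparison argument. The paper itself omits the proof and attributes it to \cite{Carlos2012}, where the same idea --- the solution at $\lambda_{0}$ and the zero function serve as the two barriers for every smaller $\lambda$ --- is used, so your proposal matches the intended argument while being self-contained within this paper's machinery.
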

We present the following new results.
\begin{lemma}\label{P1Lemma13}
Let there exist a function $u\in C^{2}_{loc}\left((0,\frac{1}{2}],\mathbb{R}\right)$ satisfying equations (\ref{P1Eq101}), (\ref{P1Eq102}) and (\ref{P1Eq104}), then 
\begin{equation}\label{P1Eq106}
\lambda \leq \frac{384}{11}\approx 34.91.
\end{equation}
\end{lemma}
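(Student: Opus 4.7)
The plan is to rewrite the problem as a Green's-function fixed point equation and then run a bootstrap whose critical value of $\lambda$ is forced to be $384/11$. Setting $U(t):=tv(t)$, the ODE (\ref{P1Eq101}) and the boundary condition (\ref{P1Eq104}) (which, coming from $u'(1/2)=0$, means $v'(1/2)+2v(1/2)=0$, i.e. $U'(1/2)=0$) are equivalent to
\[
U''=-\tfrac{U^{2}}{8t^{2}}-\tfrac{\lambda}{2},\qquad U(0^{+})=0,\qquad U'(\tfrac{1}{2})=0,
\]
with $U\ge 0$ by (\ref{P1Eq102}). The Green's function $G(t,s)=-\min(t,s)$ of $\partial_{t}^{2}$ with these boundary data inverts the problem as
\[
U(t)=\int_{0}^{1/2}\min(t,s)\Bigl[\tfrac{U^{2}(s)}{8s^{2}}+\tfrac{\lambda}{2}\Bigr]ds,
\]
and the identity $\int_{0}^{1/2}\min(t,s)\,ds=t(1-t)/2$ immediately gives the base estimate $U(t)\ge \lambda t(1-t)/4$.

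I would then bootstrap. If $U(t)\ge a\,t(1-t)/4$ on $[0,1/2]$, plugging back in and splitting the integral at $s=t$ gives
\[
U(t)\ge \frac{a^{2}}{128}\int_{0}^{1/2}\!\min(t,s)(1-s)^{2}\,ds+\frac{\lambda t(1-t)}{4}=\frac{t(1-t)}{4}\Bigl[\frac{a^{2}}{384}f(t)+\lambda\Bigr],
\]
where direct integration produces $f(t)=(7/2-6t+4t^{2}-t^{3})/(1-t)$. The whole bound now rests on identifying $\min_{t\in[0,1/2]}f(t)$, which I see as the main technical obstacle. Polynomial division gives $f(t)=t^{2}-3t+3+\tfrac{1}{2(1-t)}$, so $f'(t)=2t-3+\tfrac{1}{2(1-t)^{2}}$; multiplying by $2(1-t)^{2}$ reduces the sign of $f'$ to that of $g(t):=4t^{3}-14t^{2}+16t-5$. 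Since $g'(t)=12(t-1)(t-4/3)\ge 0$ on $[0,1]$ the cubic $g$ is increasing, and from $g(1/2)=0$ one gets $g\le 0$ on $[0,1/2]$. Hence $f$ is decreasing on $[0,1/2]$ with $\min f=f(1/2)=11/4$.

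Consequently, $U\ge a\,t(1-t)/4$ implies $U\ge (11a^{2}/1536+\lambda)\,t(1-t)/4$. Iterating $a_{n+1}=11a_{n}^{2}/1536+\lambda$ with $a_{0}=\lambda$ gives $U\ge a_{n}\,t(1-t)/4$ for every $n$; evaluating at $t=1/2$ yields $a_{n}\le 8\,U(1/2)<\infty$, so $(a_{n})$ is bounded. If instead $\lambda>384/11$, the quadratic $\Phi(a):=11a^{2}/1536-a+\lambda$ has discriminant $1-11\lambda/384<0$ and attains its minimum $\lambda-384/11>0$ at $a=768/11$; hence $a_{n+1}-a_{n}=\Phi(a_{n})\ge \lambda-384/11>0$, forcing $a_{n}\to\infty$ and contradicting the boundedness. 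Therefore $\lambda\le 384/11$.
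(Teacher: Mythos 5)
Your proof is correct, but it follows a genuinely different route from the paper's. The paper works directly with $v=-u/t$: from (\ref{P1Eq101}) it gets $v'\le 0$, then $v''\ge 0$, hence by convexity $v'(t)\le v'\left(\tfrac12\right)=-c$ with $c=\tfrac12\int_0^{1/2}v^2s\,ds+\tfrac{\lambda}{4}$; integrating and using the boundary condition gives $v(t)\ge c(1-t)$, and substituting this back into the formula for $c$ yields the single self-consistent quadratic inequality $\tfrac{11}{384}c^{2}-c+\tfrac{\lambda}{4}\le 0$, whose real solvability forces $\lambda\le \tfrac{384}{11}$. You instead pass to $U=tv$, invert with the kernel $\min(t,s)$, and run a monotone bootstrap, replacing the discriminant argument by divergence of the iterates $a_{n+1}=\tfrac{11a_n^2}{1536}+\lambda$. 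The two proofs meet at the same integral $\int_0^{1/2}s(1-s)^2\,ds=\tfrac{11}{192}$ (your $f(1/2)=\tfrac{11}{4}$ is exactly $48$ times it), which is why the thresholds coincide; but your route pays an extra cost — the minimization of $f$ over $[0,\tfrac12]$, needed because the iteration must propagate a pointwise bound, whereas the paper's convexity trick lets it evaluate only at $t=\tfrac12$ — while buying independence from the $v''\ge 0$ step. Three small points: the constant in $a_n\le 8U(1/2)$ should be $16U(1/2)$ since $t(1-t)/4=\tfrac{1}{16}$ at $t=\tfrac12$ (immaterial, only boundedness is used); the assertion $U(0^{+})=0$ deserves a line of justification (e.g.\ $v$ nonincreasing and $\int_0^{1/2}v^2s\,ds<\infty$ give $t^2v^2(t)\le 2\int_0^t v^2s\,ds\to 0$); and you correctly read the boundary condition (\ref{P1Eq104}), which as printed in the paper is a typo for $v\left(\tfrac12\right)=-\tfrac12 v'\left(\tfrac12\right)$, i.e.\ $U'\left(\tfrac12\right)=0$.
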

\begin{proof}
Now from equation (\ref{P1Eq101}), we have
\begin{equation}\label{P1Eq107}
v'(t)\leq 0,~\forall t\in \left(0,\frac{1}{2}\right].
\end{equation}
Again from equation (\ref{P1Eq101}), we get
\begin{equation}\label{P1Eq108}
v''(t)=\frac{1}{4t^{3}}\int_{0}^{t}v^2(s)s~ds-\frac{v^{2}(t)}{8t},~\forall t\in \left(0,\frac{1}{2}\right].
\end{equation}
Therefore by using (\ref{P1Eq107}) and (\ref{P1Eq102}), from (\ref{P1Eq108}) we have
\begin{equation}
\label{P1Eq109}
v''(t)\geq0,~\forall t\in \left(0,\frac{1}{2}\right].
\end{equation}
Therefore $v'(t)$ is increasing in $ \left(0,\frac{1}{2}\right].$ Now
\begin{equation}\label{P1Eq110}
v'(t)\leq v'\left(\frac{1}{2}\right)=-\frac{1}{2}\int_{0}^{\frac{1}{2}}v^2(s)s~ds-\frac{\lambda}{4},~\forall t\in \left(0,\frac{1}{2}\right].
\end{equation}
Therefore, we have 
\begin{equation}
\label{P1Eq111}
v'(t)\leq -c,~\forall t\in \left(0,\frac{1}{2}\right],
\end{equation}
where 
\begin{equation}\label{P1Eq112}
c=\frac{1}{2}\int_{0}^{\frac{1}{2}}v^2(s)s~ds+\frac{\lambda}{4}.
\end{equation}
Now, integrating equation (\ref{P1Eq111}) from $0$ to $t$ and by using equation (\ref{P1Eq104}), we have
\begin{equation}
\label{P1Eq113}
v(t)\geq c(1-t),~\forall t\in \left(0,\frac{1}{2}\right].
\end{equation}
Therefore, from equations (\ref{P1Eq112}) and (\ref{P1Eq113}), we get
\begin{equation}
\frac{11}{384}c^{2}-c+\frac{\lambda}{4}\leq 0,
\end{equation}
which implies the equation (\ref{P1Eq106}).
\end{proof}
\begin{lemma}\label{P1Lemma14}
Let \begin{equation}\label{P1Eq115}
0\leq \lambda \leq 2C~\mbox{and}~C\leq \frac{128}{9},
\end{equation} then there exists a  solution $\beta \in C^{2}_{loc}\left((0,\frac{1}{2}],\mathbb{R}\right)$ satisfies equation (\ref{P1Eq37}), the assumption $P_1$,  $\beta'\left(\frac{1}{2}\right)=0$ and $\beta(t)\leq0$.
\end{lemma}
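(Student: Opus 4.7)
The plan is to exhibit an explicit upper solution $\beta$ meeting all four requirements: the differential inequality (\ref{P1Eq37}), property $P_1$, the endpoint condition $\beta'(1/2)=0$, and the sign condition $\beta\le 0$. A natural candidate respecting the two boundary constraints $\beta(0)=0$ and $\beta'(1/2)=0$ while staying non-positive on $[0,1/2]$ is the simple polynomial
\[
\beta(t) \;=\; -C\,t(1-t).
\]
Before any computation with derivatives, the qualitative conditions are immediate: $\beta(0)=0$; $t(1-t)\ge 0$ on $[0,1]$ gives $\beta(t)\le 0$; the ratio $|\beta(t)|/t=C(1-t)\to C<\infty$ as $t\to 0^+$, which verifies $P_1$; and $\beta'(t)=-C(1-2t)$ gives $\beta'(1/2)=0$.

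The heart of the argument is the verification of the differential inequality. One computes $\beta''(t)=2C$ and $\beta^2(t)/(8t^2)=C^2(1-t)^2/8$, so (\ref{P1Eq37}) is equivalent to
\[
2C \;\ge\; \frac{C^2(1-t)^2}{8}+\frac{\lambda}{2}\qquad\text{for all }t\in\Bigl(0,\tfrac12\Bigr].
\]
The decisive observation is that $(1-t)^2$ is decreasing on $[0,1/2]$, so the supremum of the right-hand side is attained at $t=0$; it therefore suffices to check the single scalar inequality $2C\ge C^2/8+\lambda/2$, i.e.\ $\lambda\le 4C-C^2/4$. Using the hypothesis $\lambda\le 2C$, this reduces to a bound on $C$ alone (of the form $C^2\le 8C$ after plugging $\lambda=2C$), which is where the numerical threshold in the statement comes from and where the companion hypothesis $C\le \frac{128}{9}$ is intended to fit.

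The main obstacle is precisely this last pointwise verification: depending on how sharp the bound on $C$ needs to be, the plain quadratic ansatz may not suffice, and one could enrich it to $\beta(t)=-Ct+at^2+bt^3$, with the constraint $a+3b/4=C$ forced by $\beta'(1/2)=0$, choosing $b$ so that $\beta''(0)=2a$ is enlarged enough to dominate the worst term $C^2/8+\lambda/2$ at $t=0$. The verification then becomes a question of non-negativity of an explicit polynomial in $t$ on $(0,1/2]$, handled by monotonicity since the leading behaviour is controlled near $t=0$ and the boundary behaviour near $t=1/2$ is already pinned by the choice of coefficients. Either way, the structural step is the same: build $\beta$ explicitly, reduce the differential inequality to its worst case at $t=0$, and extract the sufficient condition on the parameters $C$ and $\lambda$.
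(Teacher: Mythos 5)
Your candidate $\beta(t)=-Ct(1-t)$ does not satisfy the differential inequality under the stated hypotheses, so the proof as written establishes a strictly weaker lemma. Your own reduction shows the requirement is $2C\ge \frac{C^2}{8}+\frac{\lambda}{2}$, and with $\lambda=2C$ this forces $C\le 8$. But the lemma allows $C$ up to $\frac{128}{9}\approx 14.22$. Concretely, take $C=14$ and $\lambda=28$: the hypotheses $\lambda\le 2C$ and $C\le\frac{128}{9}$ hold, yet near $t=0$ you would need $28\ge \frac{196}{8}+14=38.5$, which fails. This is not a cosmetic loss of constant: the lemma is used to derive the lower bound $\lambda_0\ge\frac{256}{9}$ in the subsequent proposition, and your version would only yield $\lambda_0\ge 16$. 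The fallback you sketch (a cubic $-Ct+at^2+bt^3$) is not carried out, and it faces the same structural obstruction: any ansatz with $\beta(t)\sim -Ct$ and \emph{bounded} second derivative must satisfy $\beta''(0^+)\ge \frac{C^2}{8}+\frac{\lambda}{2}$ at the worst point $t=0$, while the constraints $\beta'\left(\frac{1}{2}\right)=0$ and $\beta\le 0$ limit how large you can make $\beta''$ near the origin.

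The paper's proof escapes this by choosing $\beta(t)=-Ct\left(\frac{3}{2}-\sqrt{2t}\right)$ (the coefficient $\frac{3}{2}$ being forced by $\beta'\left(\frac{1}{2}\right)=0$). Here $\beta''(t)=\frac{3C}{2\sqrt{2t}}$ blows up as $t\to 0^{+}$ while $\frac{\beta^{2}}{8t^{2}}$ stays bounded, so the inequality is automatic near the origin and the binding point moves to the interior (around $\sqrt{2t}=\frac34$). After using $\lambda\le 2C$, the difference $\beta''-\frac{\beta^{2}}{8t^{2}}-\frac{\lambda}{2}$ factors as
\begin{equation*}
\frac{C^{2}}{8\sqrt{2t}}\left(\frac{3}{2}-\sqrt{2t}\right)\left(\left(\sqrt{2t}-\frac{3}{4}\right)^{2}+\frac{128-9C}{16C}\right),
\end{equation*}
which is nonnegative precisely when $C\le\frac{128}{9}$ --- that is where the threshold in the statement actually comes from. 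To repair your argument you would need to adopt an ansatz with this $t^{3/2}$-type behaviour (or otherwise an unbounded second derivative at the origin); the quadratic, and any smooth polynomial, cannot reach $\frac{128}{9}$.
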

\begin{proof}
We put 
\begin{equation}
\beta(t)=-C t (A-\sqrt{2t}),~\forall t\in \left(0,\frac{1}{2}\right]~\mbox{and}~C \geq 0.
\end{equation}
Obviously $\beta(t)$ satisfy assumption $P_1$. Now, $\beta'\left(\frac{1}{2}\right)=0$  implies $\displaystyle A=\frac{3}{2}$. Therefore $\beta(t)\leq0$ is also fulfilled. Now, we have
\begin{eqnarray}
&&\beta''(t)-\frac{\beta^{2}(t)}{8 t^{2}}-\frac{\lambda}{2}\\
&&=\frac{3C}{2 \sqrt{2t}}-\frac{C^{2}t^{2}\left(\frac{3}{2}-\sqrt{2t}\right)^{2}}{8 t^{2}}-\frac{\lambda}{2},\\
&&=\frac{C}{\sqrt{2t}}\left(\frac{3}{2}-\frac{\lambda\sqrt{2t}}{2C}\right)-\frac{C^{2}\left(\frac{3}{2}-\sqrt{2t}\right)^{2}}{8},\\
&&\geq\frac{C}{\sqrt{2t}}\left(\frac{3}{2}-\sqrt{2t}\right)-\frac{C^{2}\left(\frac{3}{2}-\sqrt{2t}\right)^{2}}{8},~\mbox{since $\lambda\leq 2C$}\\
&&=\frac{C^2}{8\sqrt{2t}}\left(\frac{3}{2}-\sqrt{2t}\right)\left(\left(\sqrt{2t}-\frac{3}{4}\right)^{2}+\frac{-9C+128}{16C}\right),\\
&&\geq0,~\forall t\in \left(0,\frac{1}{2}\right],~ \mbox{since $\displaystyle C\leq \frac{128}{9}$}.
\end{eqnarray}
Hence the inequality (\ref{P1Eq37}) is satisfied.
\end{proof}
\begin{lemma}\label{P1Lemma14000}
Let \begin{equation}\label{P1Eq115000}
0\leq \lambda \leq 3C~\mbox{and}~C\leq 48,
\end{equation} then there exists a  solution $\beta \in C^{2}_{loc}\left((0,\frac{1}{2}],\mathbb{R}\right)$ satisfies equation (\ref{P1Eq37}), the assumption $P_1$,  $\beta\left(\frac{1}{2}\right)=0$ and $\beta(t)\leq0$.
\end{lemma}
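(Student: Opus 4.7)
The plan is to mimic the explicit construction used in Lemma \ref{P1Lemma14}, modifying only the constant $A$ so that the new right-endpoint condition is satisfied. Specifically, I would try the ansatz
\[
\beta(t) = -Ct(A - \sqrt{2t}),\qquad t \in \left(0, \tfrac{1}{2}\right],
\]
with $C \geq 0$ and $A$ to be determined. The required regularity and the condition $\lim_{t\to 0^+}\beta(t)/\sqrt{t} = 0$ from assumption $P_1$ are automatic for this choice, exactly as in Lemma \ref{P1Lemma14}, so I would not rederive them.

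The boundary condition is now $\beta(1/2) = 0$ rather than $\beta'(1/2) = 0$. Evaluating the ansatz at $t = 1/2$ gives $\beta(1/2) = -(C/2)(A - 1)$, which forces $A = 1$. With this choice, for $t \in (0, 1/2]$ one has $\sqrt{2t} \in (0, 1]$, so $1 - \sqrt{2t} \geq 0$ and consequently $\beta(t) \leq 0$ throughout the interval. Both endpoint sign requirements are therefore built into the ansatz.

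The remaining task, which is the main computational step, is to verify the differential inequality
\[
\beta''(t) - \frac{\beta^{2}(t)}{8t^{2}} - \frac{\lambda}{2} \;\geq\; 0 \quad \text{on } \left(0,\tfrac{1}{2}\right].
\]
Computing $\beta''(t) = 3C/(2\sqrt{2t})$ and substituting, then using the hypothesis $\lambda \leq 3C$ to dominate the constant term, I would factor out $(1 - \sqrt{2t})/(8\sqrt{2t})$ to reach the form
\[
\frac{C(1-\sqrt{2t})}{8\sqrt{2t}}\Bigl[\,12 - C\sqrt{2t}\,(1 - \sqrt{2t})\,\Bigr].
\]
The first factor is clearly nonnegative on $(0, 1/2]$. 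For the bracket, setting $s = \sqrt{2t} \in (0,1]$, the quantity $s(1-s)$ attains its maximum $1/4$ at $s = 1/2$, so $C\sqrt{2t}(1-\sqrt{2t}) \leq C/4$, and the bracket is nonnegative precisely when $C/4 \leq 12$, i.e.\ $C \leq 48$.

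The only real obstacle is pinning down the correct ansatz parameter $A$ so that both the boundary condition and the sign condition hold simultaneously; once $A = 1$ is chosen, the algebra parallels that of Lemma \ref{P1Lemma14} and the sharp bound $C \leq 48$ falls out of the elementary optimization of $s(1-s)$ on $[0,1]$. No deeper machinery is needed.
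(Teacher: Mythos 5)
Your proposal is correct and follows essentially the same route as the paper: the same ansatz $\beta(t)=-Ct(A-\sqrt{2t})$ with $A=1$ forced by $\beta(1/2)=0$, the same use of $\lambda\le 3C$ to absorb the $\lambda/2$ term, and the same factorization of the residual — the paper merely writes your bracket $12-C\sqrt{2t}\,(1-\sqrt{2t})$ as the completed square $4C\bigl(\bigl(\sqrt{2t}-\tfrac{1}{2}\bigr)^{2}+\tfrac{48-C}{4C}\bigr)$, which is the same elementary optimization of $s(1-s)$ at $s=1/2$ that you perform.
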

\begin{proof}
We put \begin{equation}
\beta(t)=-C t (A-\sqrt{2t}),~\forall t\in \left(0,\frac{1}{2}\right]~\mbox{and}~C \geq 0.
\end{equation} 
Again, $\beta(t)$ satisfy assumption $P_1$. Now, $\beta\left(\frac{1}{2}\right)=0$  implies $\displaystyle A=1$. Hence, $\beta(t)\leq0$ is also fulfilled. Now, we have
\begin{eqnarray}
&&\beta''(t)-\frac{\beta^{2}(t)}{8 t^{2}}-\frac{\lambda}{2}\\
&&=\frac{3C}{2 \sqrt{2t}}-\frac{C^{2}t^{2}\left(1-\sqrt{2t}\right)^{2}}{8 t^{2}}-\frac{\lambda}{2},\\
&&=\frac{3C}{2\sqrt{2t}}\left(1-\frac{\lambda \sqrt{2t}}{3C}\right)-\frac{C^{2}\left(1-\sqrt{2t}\right)^{2}}{8},\\
&&\geq\frac{3C}{2\sqrt{2t}}\left(1-\sqrt{2t}\right)-\frac{C^{2}\left(1-\sqrt{2t}\right)^{2}}{8},~\mbox{since $\lambda\leq 3C$}\\
&&=\frac{C^2}{8\sqrt{2t}}\left(1-\sqrt{2t}\right)\left(\left(\sqrt{2t}-\frac{1}{2}\right)^{2}+\frac{-C+48}{4C}\right),\\
&&\geq0,~\forall t\in \left(0,\frac{1}{2}\right],~ \mbox{since $\displaystyle C\leq 48$}.
\end{eqnarray} 
This completes the proof.
\end{proof}
\begin{lemma}\label{P1Lemma15000}
Let \begin{equation}\label{P1Eq116000}
0\leq \lambda \leq \frac{3C}{2}~\mbox{and}~C\leq 6,
\end{equation} then there exists a  solution $\beta \in C^{2}_{loc}\left((0,\frac{1}{2}],\mathbb{R}\right)$ satisfies equation (\ref{P1Eq37}), the assumption $P_1$,  $\beta\left(\frac{1}{2}\right)=\beta'\left(\frac{1}{2}\right)$ and $\beta(t)\leq0$.
\end{lemma}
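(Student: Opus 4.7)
The plan is to follow the same template as Lemma~\ref{P1Lemma14} and Lemma~\ref{P1Lemma14000}: pick the ansatz $\beta(t)=-Ct(A-\sqrt{2t})$ with $C\geq 0$, determine $A$ from the boundary condition appropriate to Problem~3, and then verify the differential inequality (\ref{P1Eq37}) by exploiting the upper bounds on $\lambda$ and on $C$.

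First I would impose $\beta(\tfrac{1}{2})=\beta'(\tfrac{1}{2})$. A direct computation gives $\beta(\tfrac{1}{2})=-\tfrac{C}{2}(A-1)$ and $\beta'(\tfrac{1}{2})=-CA+\tfrac{3C}{2}$, so the condition forces $A=2$. The resulting candidate is $\beta(t)=-Ct(2-\sqrt{2t})$. Because $2-\sqrt{2t}\geq 1$ on $(0,\tfrac{1}{2}]$, this $\beta$ is $\leq 0$, and the expressions $|\beta|/t\to 2C$ and $\beta/\sqrt{t}\to 0$ as $t\to 0^{+}$ show the property $P_{1}$ is satisfied.

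The key step is the pointwise inequality. I would compute $\beta''(t)=\tfrac{3C}{2\sqrt{2t}}$ and $\frac{\beta^{2}}{8t^{2}}=\frac{C^{2}(2-\sqrt{2t})^{2}}{8}$, then use $\lambda\leq\tfrac{3C}{2}$ to replace $-\tfrac{\lambda}{2}$ by $-\tfrac{3C}{4}$, so that
\begin{equation*}
\beta''-\frac{\beta^{2}}{8t^{2}}-\frac{\lambda}{2}\geq \frac{3C}{4\sqrt{2t}}(2-\sqrt{2t})-\frac{C^{2}(2-\sqrt{2t})^{2}}{8}.
\end{equation*}
Factoring out the positive quantity $\frac{C^{2}(2-\sqrt{2t})}{4\sqrt{2t}}$ reduces matters to showing
\begin{equation*}
t-\sqrt{2t}+\frac{3}{C}\geq 0 \qquad \text{for all } t\in\left(0,\tfrac{1}{2}\right].
\end{equation*}
The substitution $x=\sqrt{2t}\in(0,1]$ rewrites the left side as $\tfrac{1}{2}(x-1)^{2}+\tfrac{6-C}{2C}$, which is manifestly nonnegative precisely when $C\leq 6$, closing the argument.

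The only place calibration matters is in choosing the hypothesis $\lambda\leq \tfrac{3C}{2}$ so that the replacement $-\lambda/2\geq -3C/4$ produces the factor $(2-\sqrt{2t})$ that matches the $\beta^{2}/(8t^{2})$ term; without that matching the quadratic-in-$x$ residue will not close cleanly, and one would be forced to a strictly smaller threshold on $C$. This is the only slightly delicate balance; everything else is a direct imitation of the two preceding lemmas, so I do not anticipate further obstacles.
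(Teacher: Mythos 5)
Your proposal is correct and follows essentially the same route as the paper: the ansatz $\beta(t)=-Ct(A-\sqrt{2t})$ with $A=2$ forced by $\beta(\tfrac12)=\beta'(\tfrac12)$, the replacement $-\lambda/2\geq-3C/4$, and the same factorization yielding $\tfrac12(\sqrt{2t}-1)^2+\tfrac{6-C}{2C}\geq 0$ for $C\leq 6$. No gaps.
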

\begin{proof}
We put \begin{equation}
\beta(t)=-C t (A-\sqrt{2t}),~\forall t\in \left(0,\frac{1}{2}\right]~\mbox{and}~C \geq 0.
\end{equation}
Now, $\beta(t)$ also satisfy assumption $P_1$. Similarly, $\beta\left(\frac{1}{2}\right)=\beta'\left(\frac{1}{2}\right)$  implies $\displaystyle A=2$. So, $\beta(t)\leq0$ is also fulfilled. Therefore, we have
\begin{eqnarray}
&&\beta''(t)-\frac{\beta^{2}(t)}{8 t^{2}}-\frac{\lambda}{2}\\
&&=\frac{3C}{2 \sqrt{2t}}-\frac{C^{2}t^{2}\left(2-\sqrt{2t}\right)^{2}}{8 t^{2}}-\frac{\lambda}{2},\\
&&=\frac{3C}{4\sqrt{2t}}\left(2-\frac{2\lambda \sqrt{2t}}{3C}\right)-\frac{C^{2}\left(2-\sqrt{2t}\right)^{2}}{8},\\
&&\geq\frac{3C}{4\sqrt{2t}}\left(2-\sqrt{2t}\right)-\frac{C^{2}\left(2-\sqrt{2t}\right)^{2}}{8},~\mbox{since $\lambda\leq \frac{3C}{2}$}\\
&&=\frac{C^2}{8\sqrt{2t}}\left(2-\sqrt{2t}\right)\left(\left(\sqrt{2t}-1\right)^{2}+\frac{-C+6}{C}\right),\\
&&\geq0,~\forall t\in \left(0,\frac{1}{2}\right], ~\mbox{since $\displaystyle C\leq 6$}.
\end{eqnarray} 
Hence, the proof is complete.
\end{proof}
\begin{theorem}\label{P1Theroem4}
Let $\lambda_{0}\in \mathbb{R^{+}}$. If $0\leq\lambda <\lambda_{0}$, then the equation (\ref{P5Eq1050}) corresponding to different types of boundary condition are solvable. Also there is no solution of these problems if $\lambda>\lambda_{0}$. Furthermore, every solutions $w(r)$ of governing equation corresponding to these three types of boundary conditions satisfy
\begin{equation}\label{P1Eq120}
w(r)\leq 0, ~r\in (0,1]~\mbox{and}~\lim_{r\rightarrow 0^{+}}w(r)=0.
\end{equation}
\end{theorem}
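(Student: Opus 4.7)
The plan is to set, for each of the three boundary value problems separately,
$$\lambda_0 := \sup\{\lambda \ge 0 : \text{the corresponding problem admits a solution } u \in C^{2}_{loc}((0,1/2],\mathbb{R})\},$$
and then show that this is exactly the threshold asserted in the statement. The first step is to check that this supremum is well defined and finite: the trivial function $u \equiv 0$ solves each problem at $\lambda = 0$, so the set is nonempty; boundedness from above will come from Lemma \ref{P1Lemma11}, once I verify that every solution of any of Problem 1, 2, or 3 actually lies in the class considered by that lemma. For this, I would chain Lemma \ref{P1Lemma1} (giving $\lim_{t\to 0}u(t)=0$) with Lemma \ref{P1Lemma2}, \ref{P1Lemma3}, or \ref{P1Lemma4} (depending on the boundary condition at $t = 1/2$, each yielding $u(t)\le 0$) and then Corollary \ref{P1Corollary1} (to promote $\lim_{t\to 0^+}\sqrt{t}\,u'(t)=0$ to $\lim_{t\to 0^+}u(t)/\sqrt{t}=0$). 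This places every solution of any of the three problems in the set whose $\lambda$-projection is bounded by Lemma \ref{P1Lemma11}, so $\lambda_0 < \infty$.

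With $\lambda_0$ well defined, the existence half is immediate: for $0 \le \lambda < \lambda_0$, by definition of supremum one can pick $\lambda^* \in [\lambda,\lambda_0]$ for which the problem is solvable, and Lemma \ref{P1Lemma12} transports solvability down to $\lambda$. For $\lambda > \lambda_0$, nonexistence is just the definition of $\lambda_0$ as supremum, since such a $\lambda$ cannot belong to the solvability set.

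The qualitative part is then a direct readout through the change of variables. Any solution $u(t)$ of equation (\ref{P1Eq1113}) on $(0,1/2]$ corresponds, via $t = r^{2}/2$ and $w(r) = u(t)$, to a solution $w$ of (\ref{P5Eq1050}) on $(0,1]$ (this substitution is precisely what produced (\ref{P1Eq1113}) in the derivation). Hence $w(r) \le 0$ on $(0,1]$ follows from $u(t) \le 0$ via Lemma \ref{P1Lemma2}, \ref{P1Lemma3}, or \ref{P1Lemma4}, and $\lim_{r\to 0^+} w(r) = 0$ follows from $\lim_{t\to 0^+} u(t) = 0$ via Lemma \ref{P1Lemma1}.

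The main obstacle is not depth but careful bookkeeping: for each of the three boundary conditions at $t = 1/2$, I must match the correct auxiliary lemma (Lemma \ref{P1Lemma2} for Problem 1, Lemma \ref{P1Lemma4} for Problem 2, Lemma \ref{P1Lemma3} for Problem 3) in order to invoke Lemma \ref{P1Lemma11} uniformly. Nothing genuinely new needs to be proved here; the theorem is a packaging of the earlier preliminary results into a clean threshold-type statement.
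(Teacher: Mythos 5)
Your proposal is correct and follows essentially the same route as the paper: the paper's proof is a one-line citation of Lemmas \ref{P1Lemma11}, \ref{P1Lemma12}, \ref{P1Lemma1}, \ref{P1Lemma2}, \ref{P1Lemma3} and \ref{P1Lemma4}, and your argument simply makes explicit how those results combine (defining $\lambda_{0}$ as the supremum of the solvability set, bounding it via Lemma \ref{P1Lemma11}, transporting solvability downward via Lemma \ref{P1Lemma12}, and reading off the sign and limit of $w$ through the change of variables). Your additional use of Corollary \ref{P1Corollary1} to place solutions in the class required by Lemma \ref{P1Lemma11} is a reasonable and correct filling-in of a step the paper leaves implicit.
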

\begin{proof}
The proof of this can be deduced from Lemma \ref{P1Lemma11}, Lemma \ref{P1Lemma12}, Lemma \ref{P1Lemma1}, Lemma \ref{P1Lemma2}, Lemma \ref{P1Lemma3} and Lemma \ref{P1Lemma4}.
\end{proof}
\begin{proposition}
Corresponding to equations (\ref{P5Intro10}) and (\ref{P5Eq205020}) the value of $\lambda_{0}$ admits the estimates 
\begin{equation}\label{P1Eq122000}
144\leq\lambda_{0}\leq 307.
\end{equation}
\begin{proof}
From  Lemma $7.7$ in \cite{Carlos2012} and Lemma \ref{P1Lemma14000}, we get the equation (\ref{P1Eq122000}).
\end{proof}
\end{proposition}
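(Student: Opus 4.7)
The plan is to split the estimate into the two one-sided bounds and treat each via the existence/nonexistence dichotomy provided by Theorem \ref{P1Theroem4}: since $\lambda_0$ is the threshold separating solvability from nonsolvability for Problem 1, it suffices to exhibit one concrete $\lambda$ for which Problem 1 is solvable (this gives the lower bound $\lambda_0 \geq 144$) and to invoke an available nonexistence threshold (this gives $\lambda_0 \leq 307$).

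For the lower bound, I would apply Lemma \ref{P1Lemma14000} at the extremal admissible value $C = 48$. This produces an explicit function $\beta_0(t) = -48\, t\bigl(1-\sqrt{2t}\bigr)$ which satisfies the upper-solution inequality \eqref{P1Eq37}, the boundary condition $\beta_0(\tfrac12)=0$ appropriate to Problem 1, is nonpositive on $(0,\tfrac12]$, and obeys $\lim_{t\to 0^+}\beta_0(t)/\sqrt{t}=0$ since $\beta_0(t)=O(t)$ near $0$; so property $P_1$ holds. Taking the trivial function $\alpha_0 \equiv 0$ as a lower solution (for $\lambda\geq 0$ the inequality $0 \leq \lambda/2$ is immediate), we have $\beta_0 \leq 0 = \alpha_0$ and the continuity hypothesis $P_2$ holds on the resulting order interval $D_0$. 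Therefore Theorem \ref{P1Theorem1} applies for every $\lambda \in [0, 3\cdot 48] = [0,144]$, producing a solution of Problem 1. By Theorem \ref{P1Theroem4} this forces $\lambda_0 \geq 144$.

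For the upper bound I would cite Lemma 7.7 of Carlos et al.\ \cite{Carlos2012}, which establishes directly from the integral formulation \eqref{P5problem1'} (by a convexity/monotonicity argument on $v(t)=-u(t)/t$ completely analogous to the one carried out in Lemma \ref{P1Lemma13} for the Navier-type boundary condition, but now using the Dirichlet data $v(\tfrac12)=0$) the explicit nonexistence bound $\lambda \leq 307$ for any solution of Problem 1. Combining the two sides yields \eqref{P1Eq122000}.

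The main obstacle is not the final inequality itself but verifying cleanly that the candidate $\beta_0$ of Lemma \ref{P1Lemma14000} meets every hypothesis of the monotone-iteration theorem simultaneously: the reverse-order ordering $\beta_0 \leq \alpha_0 = 0$, the singular-boundary decay $\beta_0(t)/\sqrt{t}\to 0$, and the continuity of the modified right-hand side on $D_0$. Once those checks are in place, the lower bound is an immediate consequence of the existence theorem evaluated at $C=48$, and the upper bound is only a citation.
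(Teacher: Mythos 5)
Your proposal is correct and follows essentially the same route as the paper: the lower bound $\lambda_{0}\geq 144$ comes from the explicit upper solution of Lemma \ref{P1Lemma14000} at $C=48$ (so $\lambda\leq 3C=144$) combined with the trivial lower solution $\alpha_{0}\equiv 0$ and the monotone iteration theorem, while the upper bound $\lambda_{0}\leq 307$ is the nonexistence estimate quoted from Lemma $7.7$ of \cite{Carlos2012}. The paper's proof is only a two-lemma citation; you have merely spelled out the verification it leaves implicit.
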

\begin{proposition}
Corresponding to equations (\ref{P5Intro10}) and (\ref{P5Eq205022}) the value of $\lambda_{0}$ admits the estimates 
\begin{equation}\label{P1Eq122}
\frac{256}{9}\leq\lambda_{0}\leq \frac{384}{11}.
\end{equation}
\end{proposition}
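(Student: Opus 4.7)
The strategy is to sandwich $\lambda_0$ by combining the nonexistence result from Lemma \ref{P1Lemma13} with the existence construction from Lemma \ref{P1Lemma14}, exactly mirroring the brief argument that just established the preceding proposition's bounds $144\le\lambda_0\le 307$ from Carlos' Lemma 7.7 and Lemma \ref{P1Lemma14000}.

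For the upper bound $\lambda_0\le\tfrac{384}{11}$, I would argue as follows. Suppose a solution $u$ of the BVP associated with (\ref{P5Eq205022}) exists. Introduce the substitution $v(t)=-u(t)/t$ used throughout Section \ref{P5Estimation}; the reduction carried out just after (\ref{P1Eq100}) shows that $v$ satisfies the integrated equation (\ref{P1Eq101}), the positivity (\ref{P1Eq102}), and the transformed endpoint identity (\ref{P1Eq104}). Lemma \ref{P1Lemma13} then forces $\lambda\le\tfrac{384}{11}$, so no solution can exist beyond this threshold and $\lambda_0\le\tfrac{384}{11}$.

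For the lower bound $\lambda_0\ge\tfrac{256}{9}$, fix any $\lambda\in[0,\tfrac{256}{9}]$ and apply Lemma \ref{P1Lemma14} with the extremal choice $C=\tfrac{128}{9}$ (so that $2C\ge\lambda$). This produces the explicit upper solution $\beta(t)=-C\,t\bigl(\tfrac{3}{2}-\sqrt{2t}\bigr)$, which satisfies property $P_1$, the appropriate endpoint condition, and $\beta\le 0$. Take $\alpha_0\equiv 0$ as the lower solution; this is legitimate since $\lambda\ge 0$ gives $0\le\alpha_0^{2}/(8t^{2})+\lambda/2$, and trivially $\beta\le\alpha_0$. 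The hypotheses of the monotone iterative existence theorem of Section \ref{P5Existence} (the direct analogue of Theorem \ref{P1Theorem2}) are therefore in force, and iteration produces a solution. In conjunction with Lemma \ref{P1Lemma11} and Lemma \ref{P1Lemma12}, which guarantee the existence of a critical $\lambda_0$ and the downward closedness of the solvable set, this yields $\lambda_0\ge\tfrac{256}{9}$.

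The one point that truly deserves attention, rather than routine calculation, is the bookkeeping through the transformation chain $\phi\mapsto w=r\phi'\mapsto u(t)=w(\sqrt{2t})\mapsto v=-u/t$: one must verify that the $\phi$-boundary condition (\ref{P5Eq205022}) is sent precisely to the $v$-condition (\ref{P1Eq104}) invoked in Lemma \ref{P1Lemma13} and to the $\beta$-endpoint condition $\beta'(\tfrac{1}{2})=0$ used in Lemma \ref{P1Lemma14}. Once that translation is confirmed, the estimate (\ref{P1Eq122}) is immediate from the two cited lemmas.
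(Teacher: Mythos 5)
Your proposal is correct and takes essentially the same route as the paper, whose entire proof is the one-line citation of Lemma \ref{P1Lemma13} (for the upper bound $\lambda_{0}\leq \frac{384}{11}$) and Lemma \ref{P1Lemma14} with $C=\frac{128}{9}$ (for the lower bound $\lambda_{0}\geq \frac{256}{9}$, fed into the monotone iteration machinery together with Lemmas \ref{P1Lemma11} and \ref{P1Lemma12}); you have simply filled in the details the authors leave implicit. Your closing caution about tracking the boundary condition through the chain $\phi\mapsto w=r\phi'\mapsto u\mapsto v=-u/t$ is well placed, since the paper itself never verifies that (\ref{P5Eq205022}) lands exactly on the endpoint conditions (\ref{P1Eq104}) and $\beta'\left(\tfrac{1}{2}\right)=0$ used in those two lemmas, and that translation is the only nontrivial content of the proposition.
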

\begin{proof}
From Lemma \ref{P1Lemma13} and Lemma \ref{P1Lemma14}, we have the equation (\ref{P1Eq122}).
\end{proof}
\begin{proposition}
Corresponding to equations (\ref{P5Intro10}) and (\ref{P5Eq205021}) the value of $\lambda_{0}$ admits the estimates 
\begin{equation}\label{P1Eq123000}
9\leq\lambda_{0}\leq 11.63.
\end{equation}
\end{proposition}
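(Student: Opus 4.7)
The plan is to split the proposition into its two halves and argue in the manner of the two preceding propositions. For the lower estimate $\lambda_0 \geq 9$, I would invoke Lemma \ref{P1Lemma15000}: pushing the permitted constant to its boundary $C = 6$ produces the explicit upper solution $\beta(t) = -6t(2-\sqrt{2t})$, which is non-positive, satisfies property $P_1$, obeys the upper-solution inequality (\ref{P1Eq37}) and meets the Problem~3 boundary relation $\beta(1/2) = \beta'(1/2)$, for every $0 \leq \lambda \leq 9$. Pairing this $\beta$ with the trivial lower solution $\alpha_0 \equiv 0$ (legitimate because every solution of Problem~3 is non-positive by Lemma \ref{P1Lemma3}), Theorem \ref{P1theorem3} with a sufficiently small $|k|$ delivers a solution of Problem~3 on the whole interval $\lambda \in [0,9]$; Lemma \ref{P1Lemma12} then places this interval inside the solvability set and gives $\lambda_0 \geq 9$.

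For the upper estimate $\lambda_0 \leq 11.63$, the strategy is to copy the proof of Lemma \ref{P1Lemma13} with the Problem~3 boundary relation at $t = 1/2$ replacing the Problem~2 one. Equation (\ref{P1Eq101}) still yields $v'(t) \leq 0$ and convexity $v''(t) \geq 0$, so $v'(t) \leq v'(1/2) =: -c$ on $(0,1/2]$, with
\[ c = \frac{1}{2}\int_{0}^{1/2} v^{2}(s)\, s\, ds + \frac{\lambda}{4}. \]
The essential difference from Lemma \ref{P1Lemma13} is that the Problem~3 condition at $t=1/2$, which in the $v$-variable reads $v(1/2) = -v'(1/2) = c$ (obtained by differentiating $v = -u/t$ at $t=1/2$ and using $u(1/2) = u'(1/2)$), pushes the resulting linear minorant up to $v(t) \geq c(3/2 - t)$ on $(0,1/2]$. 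Substituting this into the defining expression for $c$ and computing
\[ \int_{0}^{1/2}\!\left(\tfrac{3}{2}-s\right)^{\!2} s\, ds \;=\; \frac{11}{64} \]
produces the quadratic constraint
\[ \frac{11}{128}\,c^{2} - c + \frac{\lambda}{4} \leq 0, \]
whose discriminant $1 - 11\lambda/128$ must be non-negative. This forces $\lambda \leq 128/11 \approx 11.636$, which is the announced $11.63$.

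I do not anticipate a structural obstacle, because both halves are templated on arguments already carried out in the excerpt (Lemma \ref{P1Lemma15000} for the lower bound and Lemma \ref{P1Lemma13} for the upper bound). The one bookkeeping point that must be handled carefully is the correct translation of the $u$-level boundary condition $u(1/2) = u'(1/2)$ into the $v$-relation $v(1/2) = -v'(1/2)$ (equation (\ref{P1Eq105}) as printed appears to contain a typographical slip and should read $v(1/2) = -v'(1/2)$). Once this is in place, the integral evaluation $\int_{0}^{1/2}(3/2 - s)^{2} s\, ds = 11/64$ is what pins the numerical upper bound to exactly $128/11$, and no further obstacle remains.
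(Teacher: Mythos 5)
Your argument is correct, and it is worth separating its two halves. For the lower bound you do exactly what the paper does: its entire proof is the one-line citation of Lemma \ref{P1Lemma15000} together with Lemma~7.6 of \cite{Carlos2012}, and your first half merely unpacks how Lemma \ref{P1Lemma15000} at the extremal value $C=6$ (hence $A=2$, $\beta(t)=-6t(2-\sqrt{2t})$, $\beta(1/2)=\beta'(1/2)=-3$, admissible for $0\le\lambda\le 3C/2=9$) combines with the trivial lower solution $\alpha_0\equiv 0$, Theorem \ref{P1theorem3} and Lemma \ref{P1Lemma12} to give solvability on $[0,9]$. For the upper bound you genuinely diverge: the paper imports $\lambda_0\le 11.63$ wholesale from the external Lemma~7.6 of \cite{Carlos2012}, whereas you rederive it by rerunning the proof of Lemma \ref{P1Lemma13} with the Problem~3 relation $v(1/2)=-v'(1/2)=c$ in place of $v(1/2)=-\tfrac12 v'(1/2)$. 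Your steps check out: the minorant $v(t)\ge c\left(\tfrac32-t\right)$, the evaluation $\int_0^{1/2}\left(\tfrac32-s\right)^2 s\,ds=\tfrac{11}{64}$, the constraint $\tfrac{11}{128}c^2-c+\tfrac{\lambda}{4}\le 0$ and the discriminant condition $\lambda\le \tfrac{128}{11}\approx 11.636$ reproduce exactly the number the paper quotes, so your version buys a self-contained proof at the cost of a half page. You are also right that (\ref{P1Eq105}) (and likewise (\ref{P1Eq104})) contains a typographical slip and should involve $v'(1/2)$.

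One caveat that affects the statement rather than your proof: since $u(1/2)=w(1)=\phi'(1)$ and $u'(1/2)=w'(1)=\phi'(1)+\phi''(1)$, the boundary condition (\ref{P5Eq205021}), namely $\phi'(1)+\phi''(1)=0$, translates to $u'(1/2)=0$ (Problem~2), while (\ref{P5Eq205022}), namely $\phi''(1)=0$, translates to $u(1/2)=u'(1/2)$ (Problem~3). Both you and the paper attach the Problem~3 estimates $9\le\lambda_0\le 128/11$ to (\ref{P5Eq205021}); on the face of the change of variables these estimates belong to (\ref{P5Eq205022}), and this proposition and the preceding one appear to have their $\phi$-level labels interchanged. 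The $u$-level mathematics you carry out is unaffected.
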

\begin{proof}
By using the Lemma $7.6$ in \cite{Carlos2012}  and Lemma \ref{P1Lemma15000}, we have the equation (\ref{P1Eq123000}).
\end{proof}
\section{Numerical results and discussion}\label{P5ADM}
To find the approximate solutions, we develop the iterative numerical schemes with the help of the Fredholm integral equations (\ref{P5problem1'}), (\ref{P5problem2'}) and (\ref{P5problem3'}) respectively. Now, we decompose the solution $u(t)$ of the form
$u(t)=\sum_{i=0}^{\infty} u_{i}(t)$, and
approximate the nonlinear term in terms of Adomian's polynomials (\cite{Ghorbani}) which is  given by
\begin{equation}\label{P5ADM3}
N\left(u(t)\right)=-\frac{1}{2}u^{2}(t)=\sum_{i=0}^{\infty} A_{i}(u_{0},u_{1},\cdots,u_{i}),
\end{equation}
where 
\begin{equation}\label{P5ADM6}
A_{i}=\frac{1}{i!} \frac{d^{i}}{d\beta^{i}}N\left(\sum_{j=0}^{i} \beta^{j} u_{j}\right)_{\beta=0},~i=0,1,2,\cdots.
\end{equation}
Therefore from integral equation (\ref{P5problem1'}), we define
\begin{equation}\label{P5problem1'''}
  \mbox{Scheme of Problem $1$}=\left\{
    \begin{array}{@{} l c @{}}
       u_{0}(t)=-c t-\frac{\lambda}{4}t\left(\frac{1}{2}-t\right),  \\
       \vdots\\
      u_{n+1}(t)=\int_{0}^{t}\left(\frac{s}{2}-\frac{t}{2}\right)\frac{A_{n}}{2 s^{2}}ds,\\
      \vdots,\\
      \mbox{and}~c=-\int_{0}^{\frac{1}{2}}\sum_{i=0}^{n}\frac{A_{i}}{2 s^{2}}\left(\frac{1}{2}-s\right)ds,
    \end{array}\right.
\end{equation}
We compute the arbitary constant $c$ by using Mathematica software. For better understanding, we present below the algorithm of our proposed technique corresponding to equation (\ref{P5problem1'}).

\textbf{Algorithm:}

 Step $1.$ Convert Fredholm integral equation (\ref{P5problem1'})  into Voltera integral equation.

 Step $2.$ Identify the constant term, and approximate the nonlinear term by equation (\ref{P5ADM3}).
 
 Step $3.$ Consider $u_0(t)$ as in (\ref{P5problem1'''}), and obtain $u_i(t)$ for $i=1,2,\cdots,n+1$.
 
 Step $4.$ Approximate the term $\displaystyle \frac{u^2}{4 s^{2}}$ by $\displaystyle -\sum_{i=0}^{n}\frac{A_i}{2 s^{2}}$ in the equation $\displaystyle c=\int_{0}^{\frac{1}{2}}\frac{u^2}{4 s^{2}}\left(\frac{1}{2}-s\right)ds$.
 
 Step $5.$ Compute the values of the constant and the approximate solutions $u(t)=\displaystyle\sum_{i=0}^{n+1}u_i$.

Again, we apply the above algorithm on equations (\ref{P5problem2'}) and (\ref{P5problem3'}), and we define  the following iterative schemes:
\begin{equation}\label{P5problem2'''}
  \mbox{Scheme of Problem $2$}=\left\{
    \begin{array}{@{} l c @{}}
      u_{0}(t)=-c t-\frac{\lambda}{4}t\left(1-t\right),  \\
       \vdots\\
      u_{n+1}(t)=\int_{0}^{t}\left(s-t\right)\frac{A_{n}}{4 s^{2}}ds,\\
      \vdots,\\
      \mbox{and}~c=-\int_{0}^{\frac{1}{2}}\sum_{i=0}^{n}\frac{A_{i}}{4 s^{2}}ds,
    \end{array}\right.
\end{equation}
\begin{equation}\label{P5problem3'''}
  \mbox{and Scheme of Problem $3$}=\left\{
    \begin{array}{@{} l c @{}}
      u_{0}(t)=-c t-\frac{\lambda}{4}t\left(\frac{3}{2}-t\right),  \\
       \vdots\\
      u_{n+1}(t)=\int_{0}^{t}\left(\frac{s}{2}-\frac{t}{2}\right)\frac{A_{n}}{2 s^{2}}ds,\\
      \vdots,\\
      \mbox{and}~c=-\int_{0}^{\frac{1}{2}}\sum_{i=0}^{n}\left(\frac{1}{2}+s\right)\frac{A_{i}}{2 s^{2}}ds.
    \end{array}\right.
\end{equation}
Approximate solutions of equations (\ref{P5problem2'}) and (\ref{P5problem3'}) can be written as $\displaystyle u(t)=\sum_{i=0}^{n+1}u_{i}(t)$, provided the series is convergent for $n\rightarrow \infty$. Recently, the convergence of ADM was established by Amit Kumar Verma et. al. in \cite{Biswajit2019}. Now by using the transformation $\displaystyle t=\frac{r^{2}}{2}$, $u(t)=w(r)$, $w(r)=r \phi'(r)$ and $\phi(1)=0$,  we get the solutions of equation (\ref{P5Intro10}). We arrive at two cases:

\noindent{\em \textbf{Case (a)}: $\lambda \geq 0.$}\label{P1Casea}\\
For $\lambda=0$, we get one trivial and one non trivial solutions. For $0<\lambda \leq \lambda_{\text{critical}}$, we always find two non-trivial solutions. We may refer them as upper and lower solution respectively. Corresponding to equations (\ref{P5Eq205022}),  (\ref{P5Eq205021}) and (\ref{P5Eq205020}),  we find the  critical value of $\lambda$, i.e. $\lambda_{\text{critical}}$, is  to be $31.94$, $11.34$ and $168.76$  respectively. For $\lambda> \lambda_{\text{critical}}$, we do not find any numerical solutions as the value of $c$ become imaginary. In subsection \ref {P5Tables} and \ref{P5Figures},  we tabulate residual errors and provide approximate solutions graph corresponding to some positive $\lambda$.

\noindent{\em \textbf{Case (b)}: $\lambda < 0.$}\\
In this case, we always have two nontrivial numerical solutions corresponding to three types of boundary conditions. One solution is negative (namely the negative solution) and the other solution is positive (namely the positive solution). We do not find any negative critical $\lambda$. We place residue errors and approximate solutions graph in the next two subsections.
\subsection{Tables}\label{P5Tables}
Here, we have placed below some numerical data of approximate solutions of $\phi(r)$ corresponding to different types of boundary conditions. In table \ref{P5table1}, we see that for $\lambda=0$ the maximum absolute residue error of lower and upper solutions are $0$ and $8.34 \times 10^{-07}$ respectively. But for $\lambda=31.94$, we observe that the maximum absolute residue error of lower and upper solutions are $1.53 \times 10^{-14}$ and $1.35 \times 10^{-14}$ respectively. Therefore, if we are increasing the value of $\lambda$, we see that the residue error of the lower solution is increasing and the residue error of the upper solution is decreasing. Similarly, if we are decreasing the value of negative $\lambda$, we see that the residue error of both positive and negative solutions are decreasing ([see: table \ref{P5table2}]). Same remarks are made to tables \ref{P5table3}, \ref{P5table4}, \ref{P5table5} and \ref{P5table6}.

\begin{table}[H]										
\caption{\small{Residue error of approximate solutions $\phi(r)$ corresponding to boundary conditions (\ref{P5Eq205022}):}}									
\centering											
\begin{center}											
\resizebox{9.7cm}{2.8cm}{											
\begin{tabular}{c c  c  c  c    }											
\hline		

			& \multicolumn{2}{c}{Lower solution} 						&	 \multicolumn{2}{c}{Upper solution} 				\\\hline
			$r\setminus\lambda$	&	$0$	&	$31.94$	&	$0$	&	$31.94$	\\\hline
0	&	0	&	0	&	0	&	0	\\
0.1	&	0	&	1.11022E-16	&	0	&	-6.93889E-17	\\
0.2	&	0	&	-3.33067E-16	&	-2.66454E-15	&	-1.11022E-16	\\
0.3	&	0	&	2.22045E-16	&	0	&	-4.44089E-16	\\
0.4	&	0	&	1.33227E-15	&	-1.06581E-14	&	-1.33227E-15	\\
0.5	&	0	&	2.22045E-15	&	-3.01981E-14	&	-2.66454E-15	\\
0.6	&	0	&	0	&	-9.9476E-14	&	-5.32907E-15	\\
0.7	&	0	&	7.54952E-15	&	-2.23821E-13	&	-1.77636E-15	\\
0.8	&	0	&	1.53211E-14	&	2.49862E-11	&	1.35447E-14	\\
0.9	&	0	&	1.95399E-14	&	8.3347E-07	&	1.86517E-14	\\\hline
\end{tabular}}											
\end{center}											
\label{P5table1}											
\end{table}
\begin{table}[H]										
\caption{\small{Residue error of approximate solutions $\phi(r)$ corresponding to  boundary conditions (\ref{P5Eq205022}):}}											
\centering											
\begin{center}											
\resizebox{11.2cm}{3cm}{											
\begin{tabular}{c c  c  c  c    }											
\hline		

			& \multicolumn{2}{c}{Positive solution} 						&	 \multicolumn{2}{c}{Negative solution} 				\\\hline
			$r\setminus\lambda$	&	$-1$	&	$-15$	&	$-1$	&	$-15$	\\\hline
0	&	0	&	0	&	0	&	0	\\
0.1	&	-7.21645E-16	&	2.22045E-16	&	1.34339E-18	&	-3.64292E-17	\\
0.2	&	4.44089E-16	&	4.44089E-15	&	-2.57498E-18	&	-5.20417E-17	\\
0.3	&	0	&	-1.77636E-15	&	-8.23994E-18	&	5.55112E-17	\\
0.4	&	0	&	-8.88178E-15	&	-2.60209E-18	&	-2.77556E-17	\\
0.5	&	1.24345E-14	&	-3.19744E-14	&	1.38778E-17	&	3.05311E-16	\\
0.6	&	6.03961E-14	&	-2.02505E-13	&	-1.82146E-17	&	1.94289E-16	\\
0.7	&	1.30562E-13	&	-3.53495E-13	&	-9.84456E-17	&	-3.33067E-16	\\
0.8	&	4.3471E-11	&	3.47169E-08	&	1.37856E-16	&	5.68989E-16	\\
0.9	&	1.42341E-06	&	0.001199979	&	3.68629E-17	&	3.66374E-15	\\\hline

\end{tabular}}											
\end{center}											
\label{P5table2}											
\end{table}

\vspace{-.4cm}
\begin{table}[H]										
\caption{\small{Residue error of approximate solutions $\phi(r)$ corresponding to boundary conditions (\ref{P5Eq205021}):}}											
\centering											
\begin{center}											
\resizebox{10cm}{2.9cm}{											
\begin{tabular}{c c  c  c  c    }											
\hline		

			& \multicolumn{2}{c}{Lower solution} 						&	 \multicolumn{2}{c}{Upper solution} 				\\\hline
			$r\setminus\lambda$	&	$0$	&	$11.34$	&	$0$	&	$11.34$	\\\hline
0	&	0	&	0	&	0	&	0	\\
0.1	&	0	&	-1.21431E-16	&	8.32667E-17	&	4.51028E-17	\\
0.2	&	0	&	-5.55112E-17	&	-2.22045E-16	&	-1.38778E-16	\\
0.3	&	0	&	-1.66533E-16	&	8.88178E-16	&	-5.55112E-17	\\
0.4	&	0	&	1.11022E-16	&	-8.88178E-16	&	-2.22045E-16	\\
0.5	&	0	&	-4.44089E-16	&	-2.66454E-15	&	4.44089E-16	\\
0.6	&	0	&	2.22045E-15	&	-2.66454E-15	&	4.44089E-16	\\
0.7	&	0	&	3.55271E-15	&	3.55271E-15	&	2.66454E-15	\\
0.8	&	0	&	3.55271E-15	&	8.88178E-16	&	1.33227E-15	\\
0.9	&	0	&	3.33067E-15	&	-1.77636E-15	&	1.9984E-15	\\\hline

\end{tabular}}											
\end{center}											
\label{P5table3}											
\end{table}
\begin{table}[H]										
\caption{\small{Residue error of approximate solutions $\phi(r)$ corresponding to boundary conditions (\ref{P5Eq205021}):}}											
\centering											
\begin{center}											
\resizebox{11.2cm}{2.9cm}{											
\begin{tabular}{c c  c  c  c    }											
\hline		

			& \multicolumn{2}{c}{Positive solution} 						&	 \multicolumn{2}{c}{Negative solution} 				\\\hline
			$r\setminus\lambda$	&	$-1$	&	$-15$	&	$-1$	&	$-15$	\\\hline
0	&	0	&	0	&	0	&	0	\\
0.1	&	-2.77556E-17	&	-4.44089E-16	&	3.04932E-18	&	-3.98986E-17	\\
0.2	&	1.11022E-16	&	-6.66134E-16	&	-5.96311E-18	&	-1.73472E-16	\\
0.3	&	4.44089E-16	&	0	&	7.04731E-18	&	2.63678E-16	\\
0.4	&	8.88178E-16	&	0	&	2.42861E-17	&	0	\\
0.5	&	2.66454E-15	&	0	&	-2.55872E-17	&	1.11022E-16	\\
0.6	&	3.55271E-15	&	-3.55271E-15	&	-2.34188E-17	&	6.66134E-16	\\
0.7	&	1.77636E-14	&	-7.10543E-15	&	3.64292E-17	&	-3.33067E-16	\\
0.8	&	3.10862E-14	&	-7.63833E-14	&	1.38778E-17	&	-3.33067E-16	\\
0.9	&	6.83897E-14	&	-4.59188E-13	&	2.54571E-16	&	7.77156E-16	\\\hline

\end{tabular}}											
\end{center}											
\label{P5table4}											
\end{table}
\begin{table}[H]										
\caption{\small{Residue error of approximate solutions $\phi(r)$ corresponding to  boundary conditions (\ref{P5Eq205020}):}}											
\centering											
\begin{center}											
\resizebox{9.7cm}{2.7cm}{											
\begin{tabular}{c c  c  c  c    }											
\hline		

			& \multicolumn{2}{c}{Lower solution} 						&	 \multicolumn{2}{c}{Upper solution} 				\\\hline
			$r\setminus\lambda$	&	$0$	&	$168.76$	&	$0$	&	$168.76$	\\\hline
0	&	0	&	0	&	0	&	0	\\
0.1	&	0	&	8.88178E-16	&	-4.44089E-16	&	4.44089E-16	\\
0.2	&	0	&	1.77636E-15	&	-1.77636E-15	&	4.44089E-16	\\
0.3	&	0	&	1.24345E-14	&	-1.06581E-14	&	-7.10543E-15	\\
0.4	&	0	&	3.19744E-14	&	-5.32907E-15	&	7.10543E-15	\\
0.5	&	0	&	6.4615E-14	&	-3.81917E-14	&	1.33227E-15	\\
0.6	&	0	&	9.23706E-14	&	-2.66454E-14	&	-3.55271E-14	\\
0.7	&	0	&	3.69482E-13	&	4.98552E-10	&	-9.23706E-14	\\
0.8	&	0	&	6.1668E-11	&	0.000557377	&	1.70296E-10	\\
0.9	&	0	&	6.71694E-06	&	-	&	1.96355E-05	\\\hline

\end{tabular}}											
\end{center}											
\label{P5table5}											
\end{table}
\begin{table}[H]										
\caption{\small{Residue error of approximate solutions $\phi(r)$ corresponding to boundary conditions (\ref{P5Eq205020}):}}											
\centering											
\begin{center}											
\resizebox{11.2cm}{3cm}{											
\begin{tabular}{c c  c  c  c    }											
\hline		

			& \multicolumn{2}{c}{Positive solution} 						&	 \multicolumn{2}{c}{Negative solution} 				\\\hline
			$r\setminus\lambda$	&	$-1$	&	$-15$	&	$-1$	&	$-15$	\\\hline
0	&	0	&	0	&	0	&	0	\\
0.1	&	8.88178E-16	&	2.22045E-16	&	6.0097E-19	&	-1.27936E-17	\\
0.2	&	3.55271E-15	&	3.55271E-15	&	-1.999E-18	&	5.55112E-17	\\
0.3	&	5.32907E-15	&	5.32907E-15	&	2.6834E-18	&	-2.25514E-17	\\
0.4	&	2.84217E-14	&	5.68434E-14	&	6.66784E-18	&	-5.55112E-17	\\
0.5	&	8.08242E-14	&	1.38556E-13	&	2.92328E-17	&	-3.71231E-16	\\
0.6	&	1.63425E-13	&	1.03029E-13	&	1.85941E-17	&	6.2797E-16	\\
0.7	&	4.99185E-10	&	4.83531E-10	&	7.08526E-17	&	-1.38778E-17	\\
0.8	&	0.000557832	&	0.000562261	&	-7.29668E-17	&	5.82867E-16	\\
0.9	&	 -	&	 -	&	1.17636E-16	&	2.41474E-15	\\\hline

\end{tabular}}											
\end{center}											
\label{P5table6}											
\end{table}

\subsection{Figures}\label{P5Figures}
Here, we have displayed few graphs corresponding to three types of boundary conditions. For positive values of $\lambda$, we see that two solutions are moving to each other for increasing the value of $\lambda$. For critical value of $\lambda$, we do not find the unique solution numerically. For negative values of $\lambda$, we observe that two nontrivial solutions are moving away from each other for decreasing the value of $\lambda$. 
\begin{figure}[H]
\centering
\subfigure[\,\,$\lambda=0$]{\includegraphics[width=.5\linewidth]{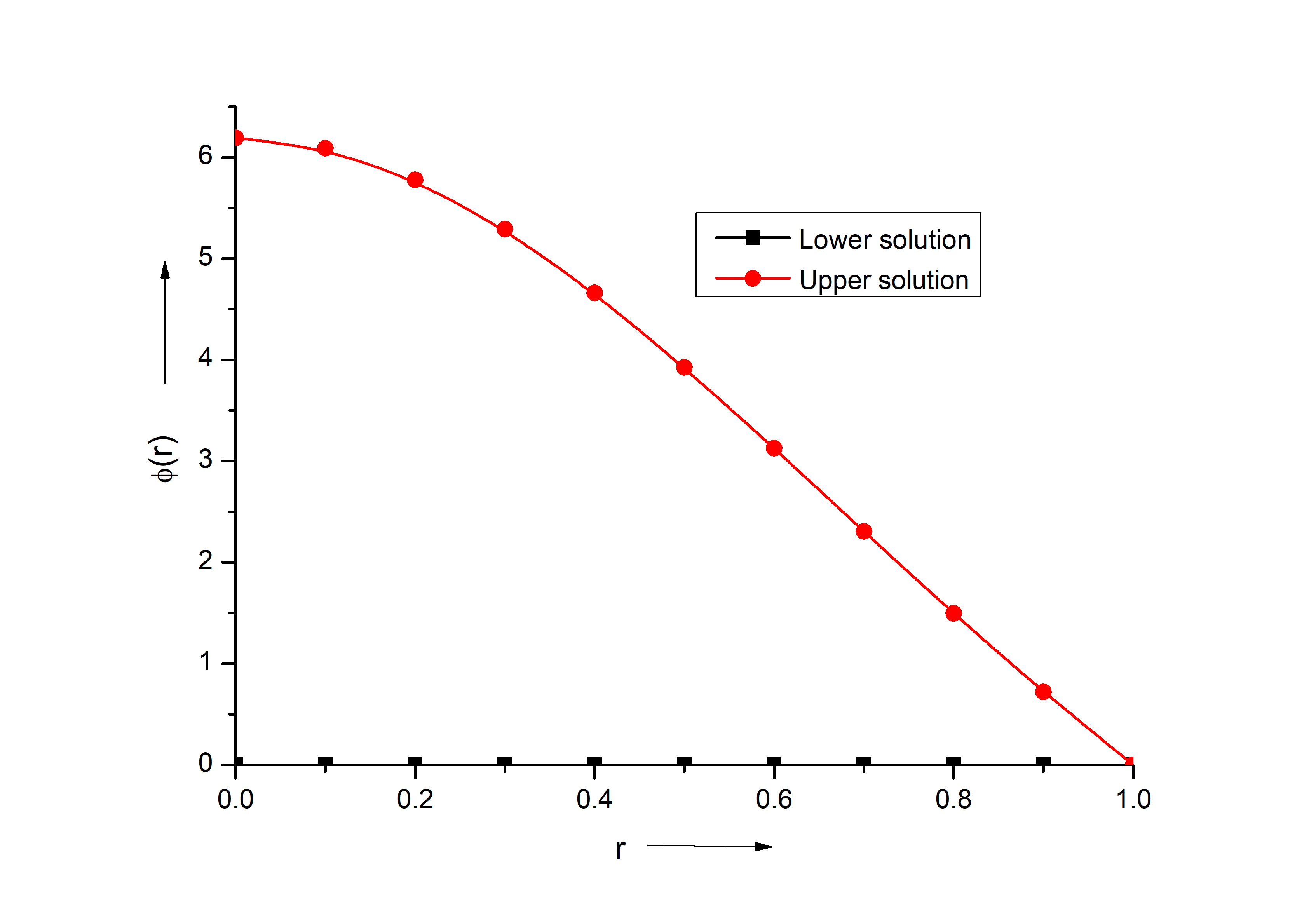}} \hspace{-.6cm}
\subfigure[\,\,$\lambda=31.94$]{\includegraphics[width=.5\linewidth]{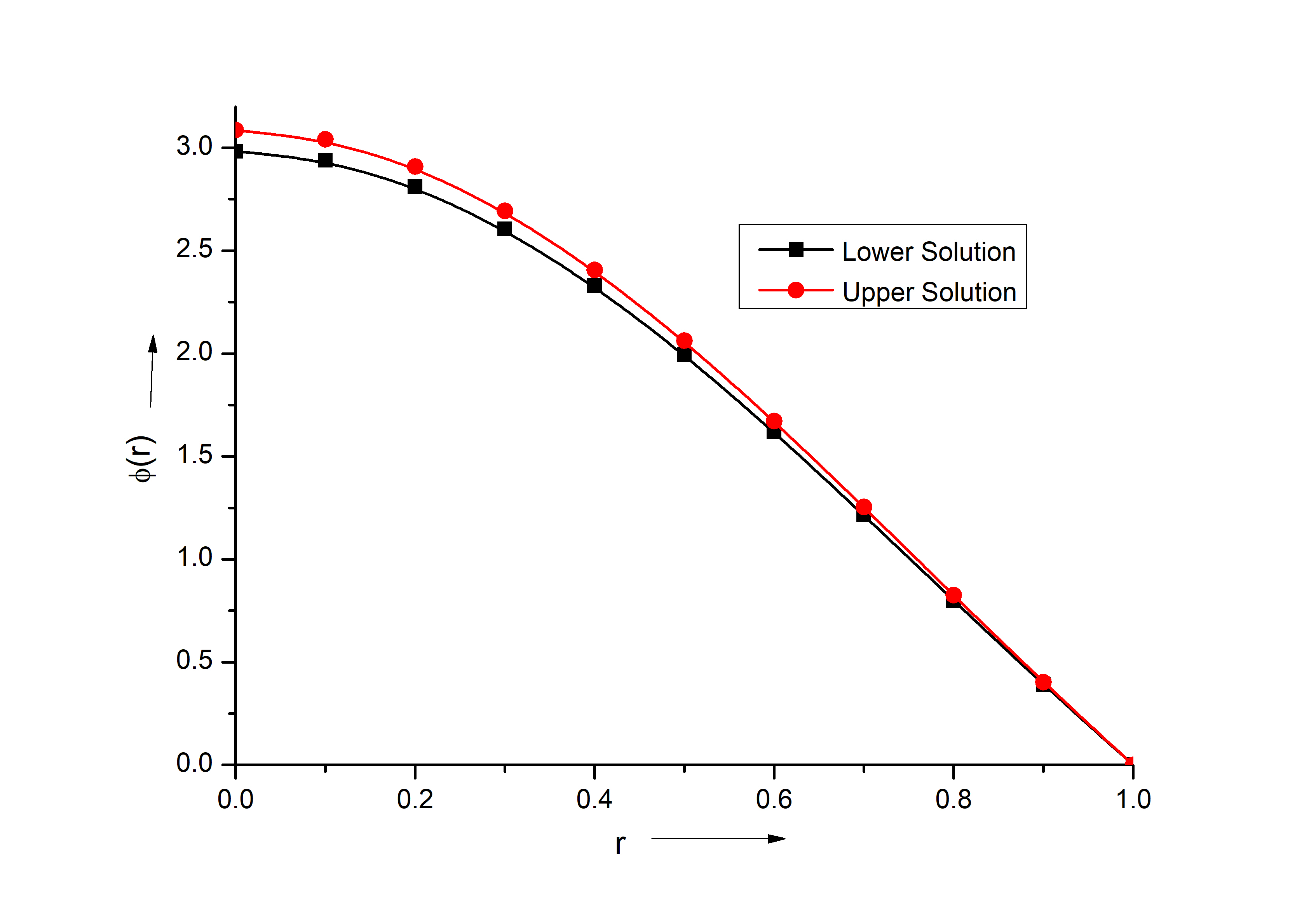}}
\caption{Approximate solutions $\phi(r)$ corresponding to equations (\ref{P5Intro10}) and (\ref{P5Eq205022}).}
\label{P5AG11}
\end{figure}
\begin{figure}[H]
\centering
\subfigure[\,\,$\lambda=-1$]{\includegraphics[width=.5\linewidth]{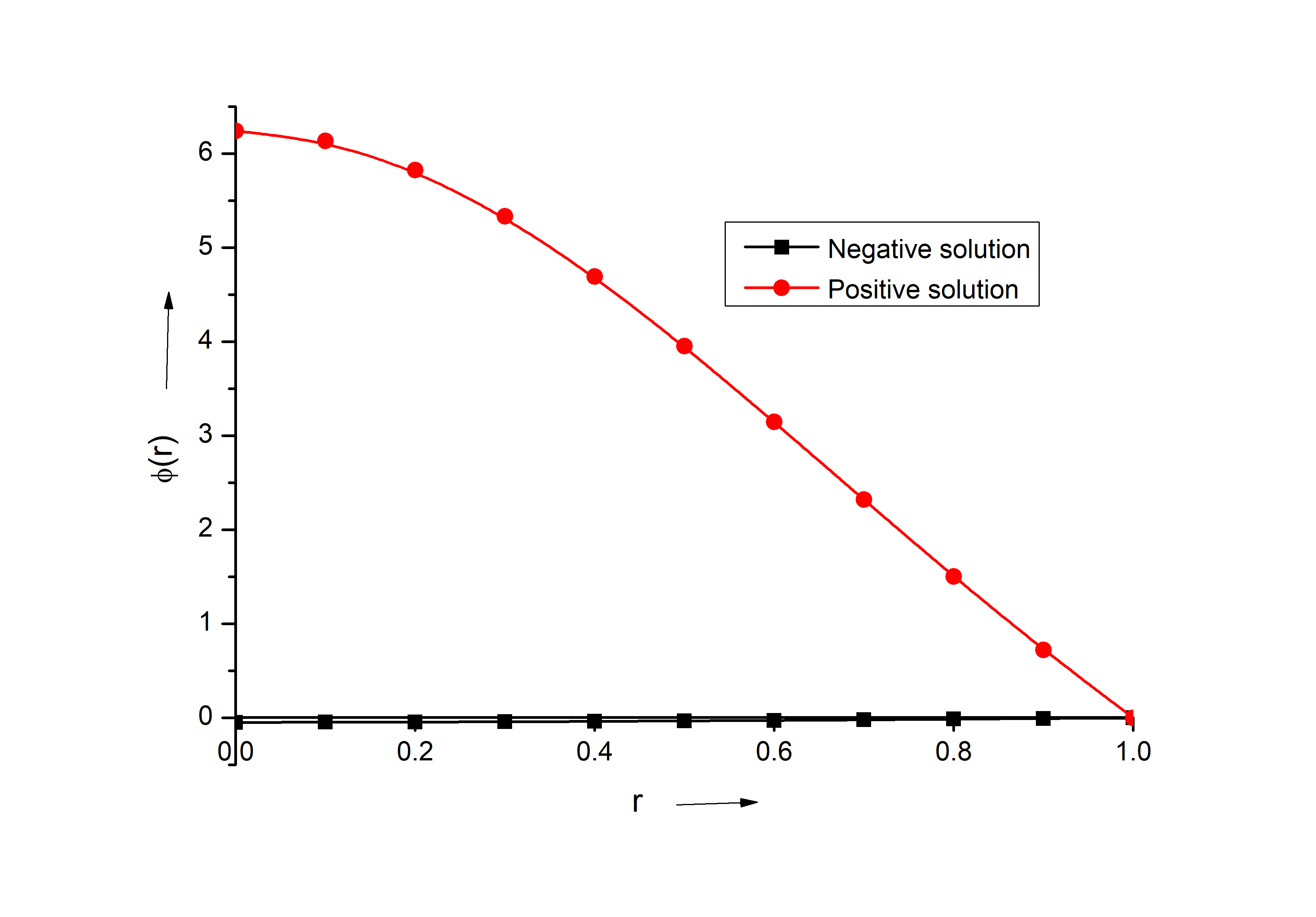}} \hspace{-.6cm}
\subfigure[\,\,$\lambda=-15$]{\includegraphics[width=.5\linewidth]{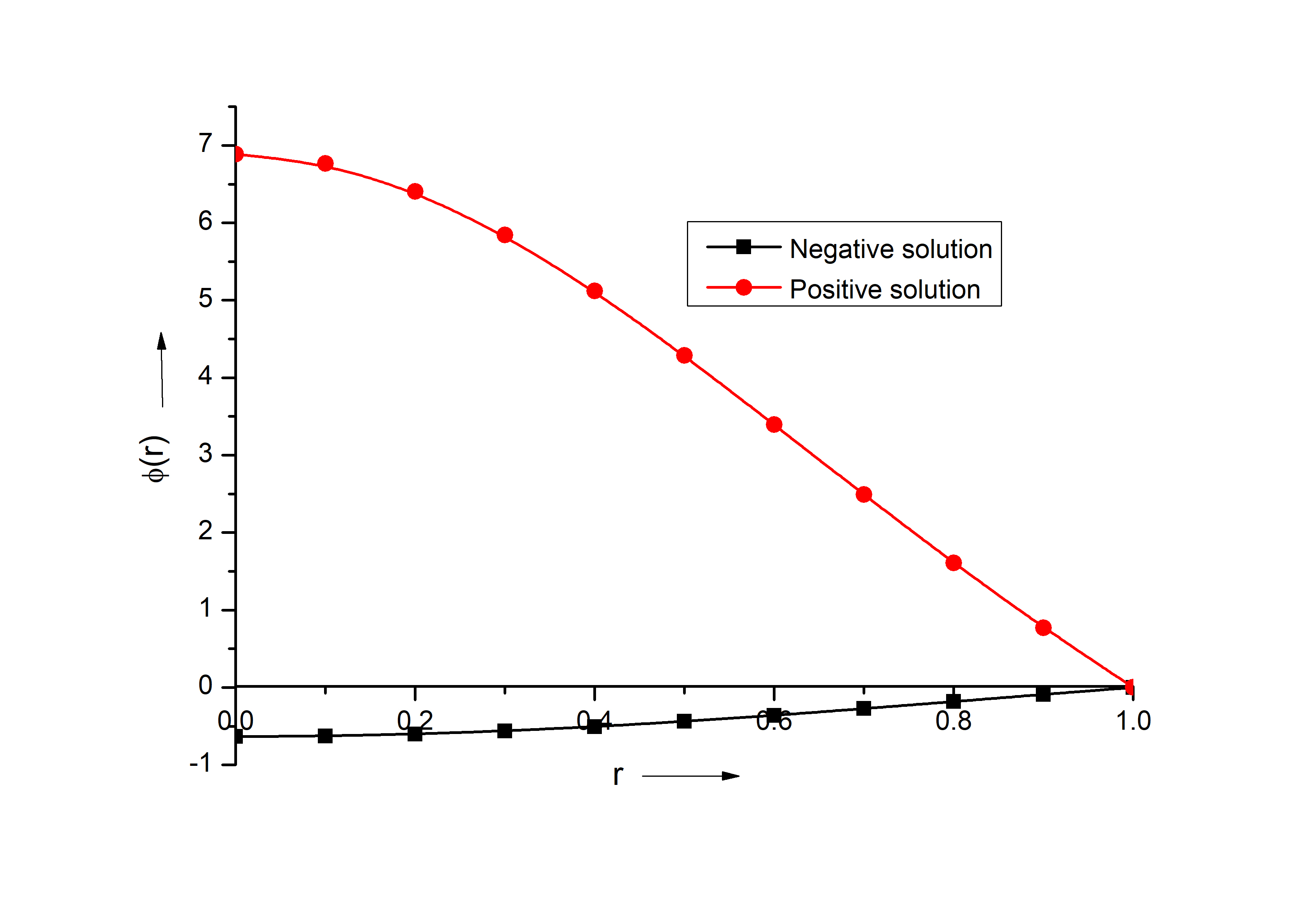}}
\caption{Approximate solutions $\phi(r)$ corresponding to equations (\ref{P5Intro10}) and (\ref{P5Eq205022}).}
\label{P5AG22}
\end{figure}
\begin{figure}[H]
\centering
\subfigure[\,\,$\lambda=0$]{\includegraphics[width=.5\linewidth]{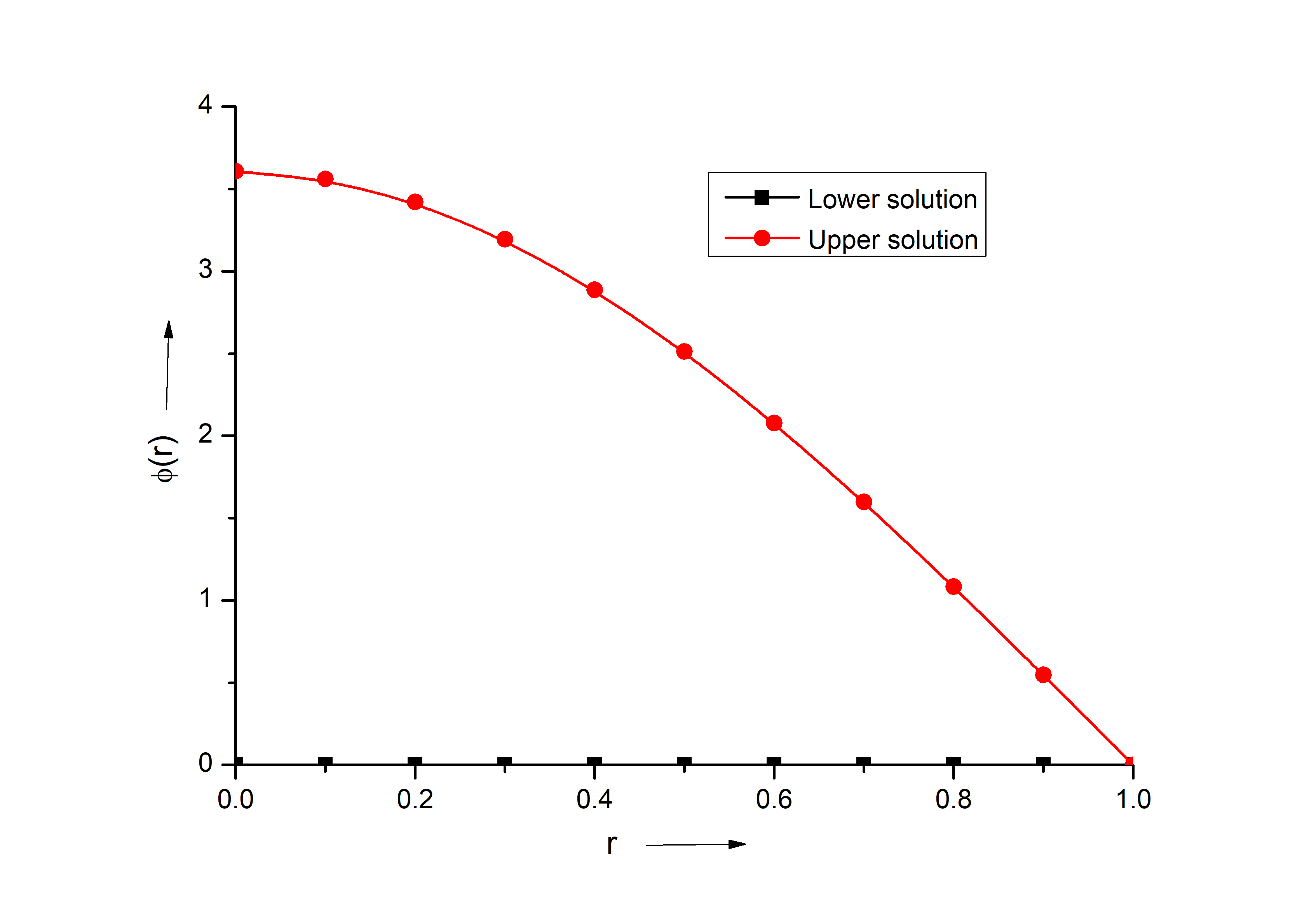}} \hspace{-.6cm}
\subfigure[\,\,$\lambda=11.34$]{\includegraphics[width=.5\linewidth]{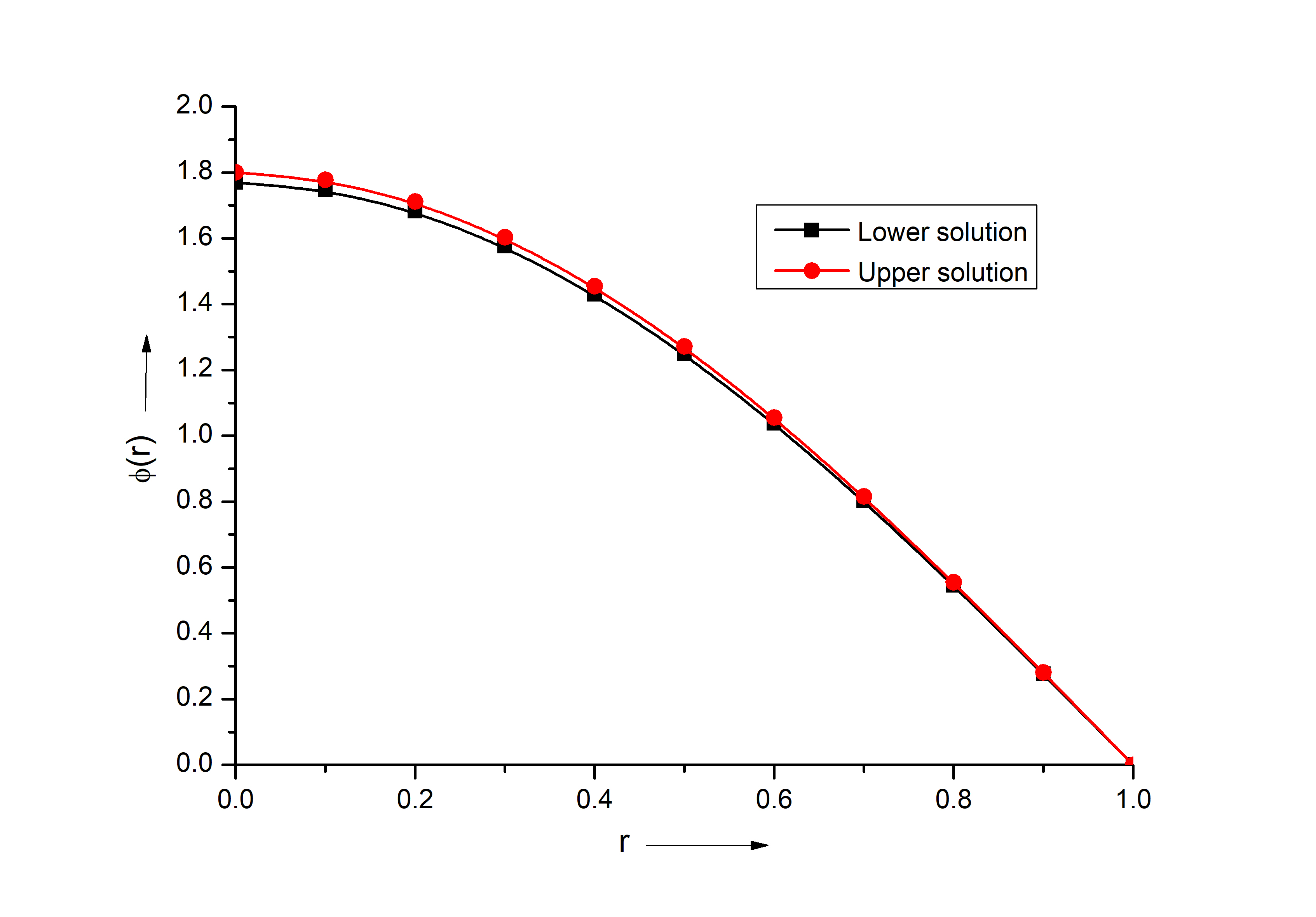}}
\caption{Approximate solutions $\phi(r)$ corresponding to equations (\ref{P5Intro10}) and (\ref{P5Eq205021}).}
\label{P5AG33}
\end{figure}
\begin{figure}[H]
\centering
\subfigure[\,\,$\lambda=-1$]{\includegraphics[width=.5\linewidth]{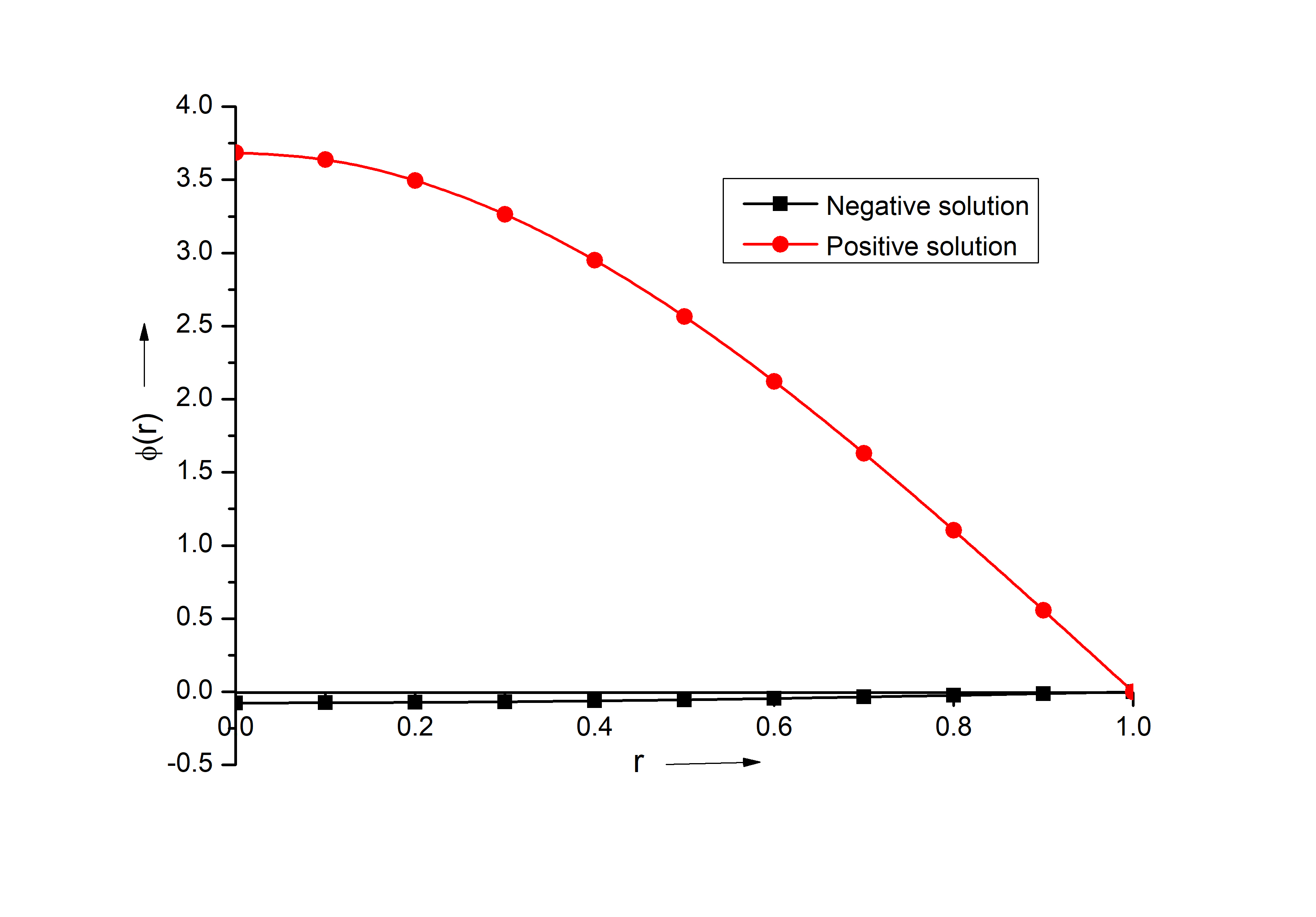}} \hspace{-.6cm}
\subfigure[\,\,$\lambda=-15$]{\includegraphics[width=.5\linewidth]{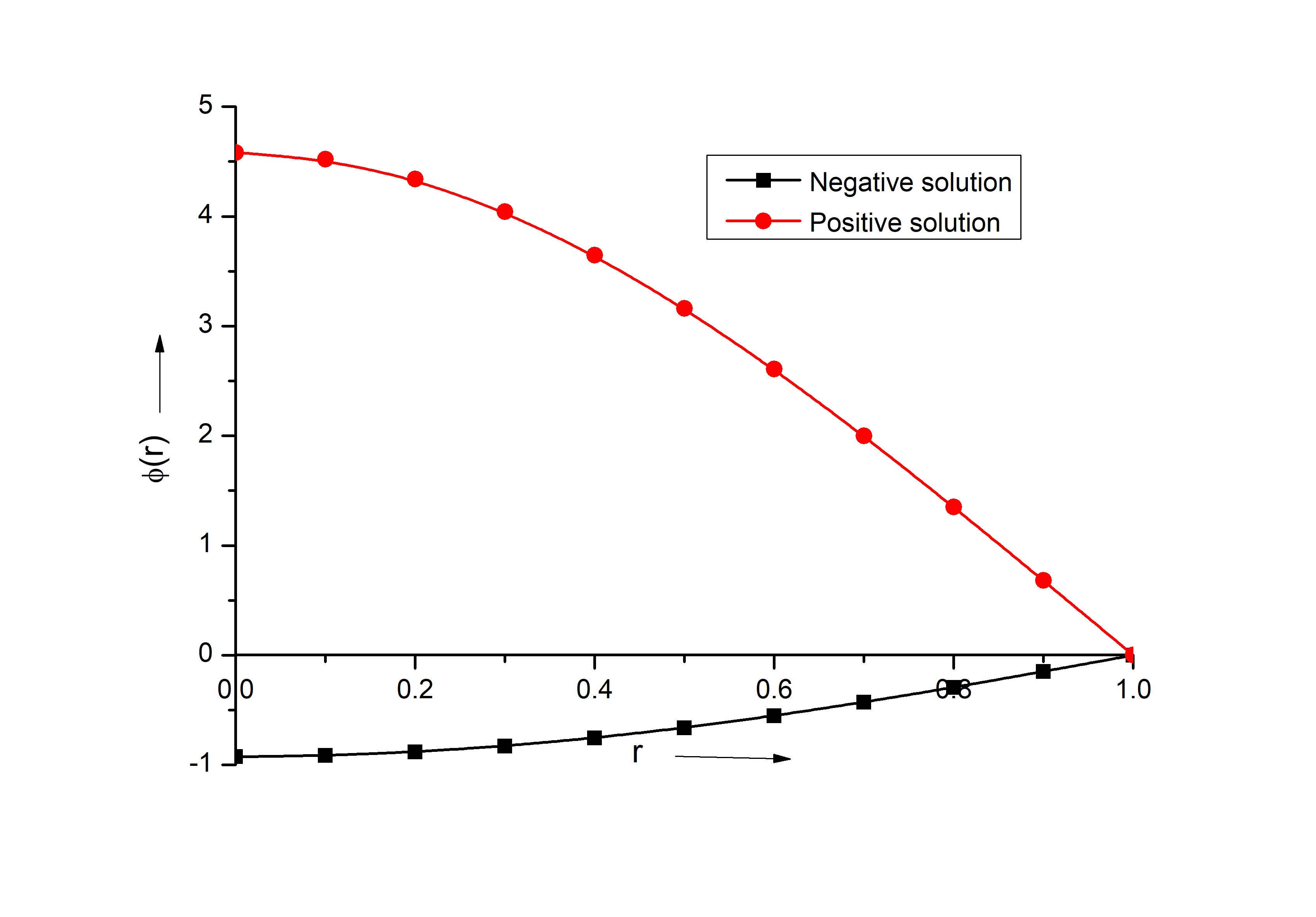}}
\caption{Approximate solutions $\phi(r)$ corresponding toequations (\ref{P5Intro10}) and (\ref{P5Eq205021}).}
\label{P5AG44}
\end{figure}
\begin{figure}[H]
\centering
\subfigure[\,\,$\lambda=0$]{\includegraphics[width=.5\linewidth]{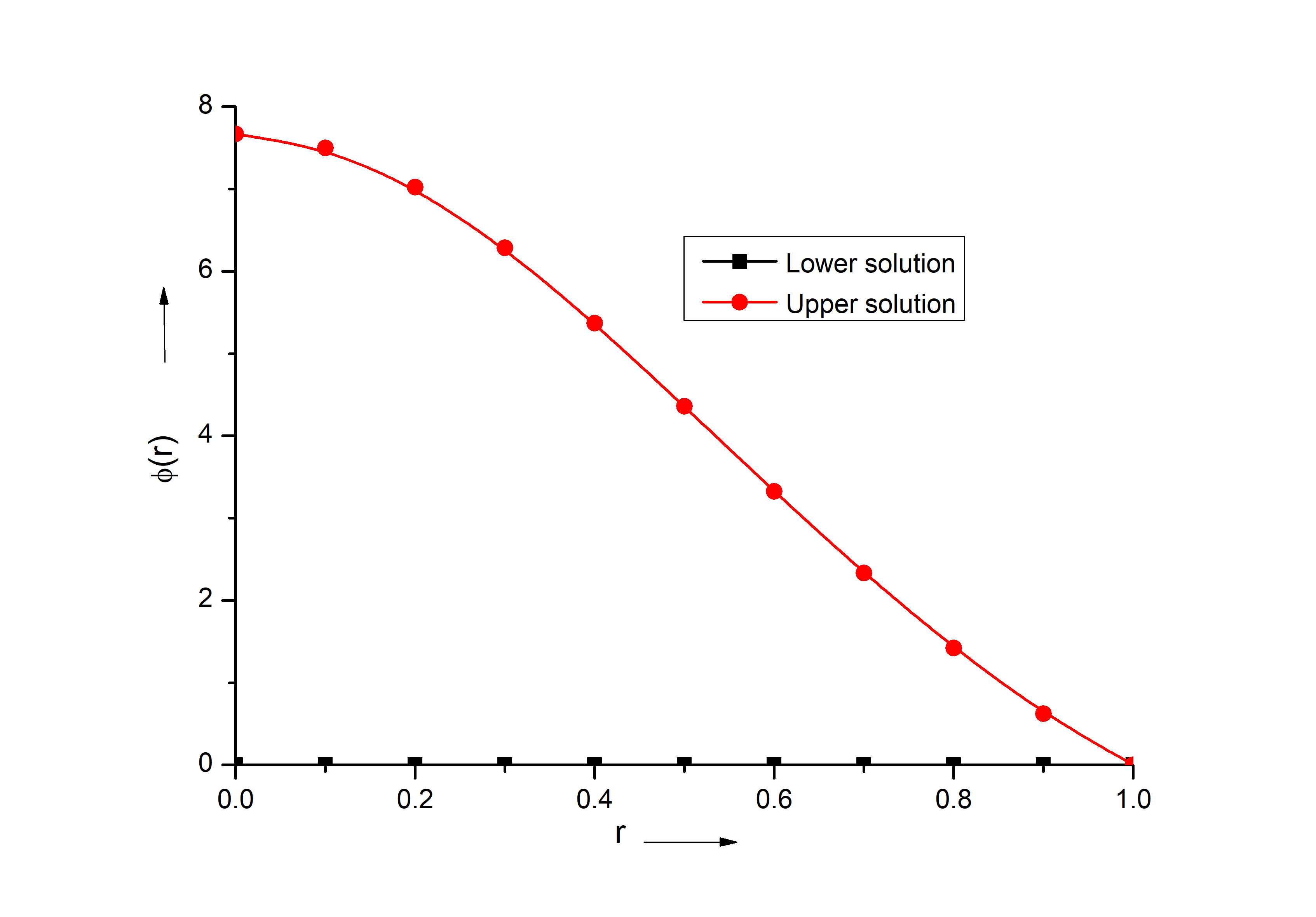}} \hspace{-.6cm}
\subfigure[\,\,$\lambda=168.76$]{\includegraphics[width=.5\linewidth]{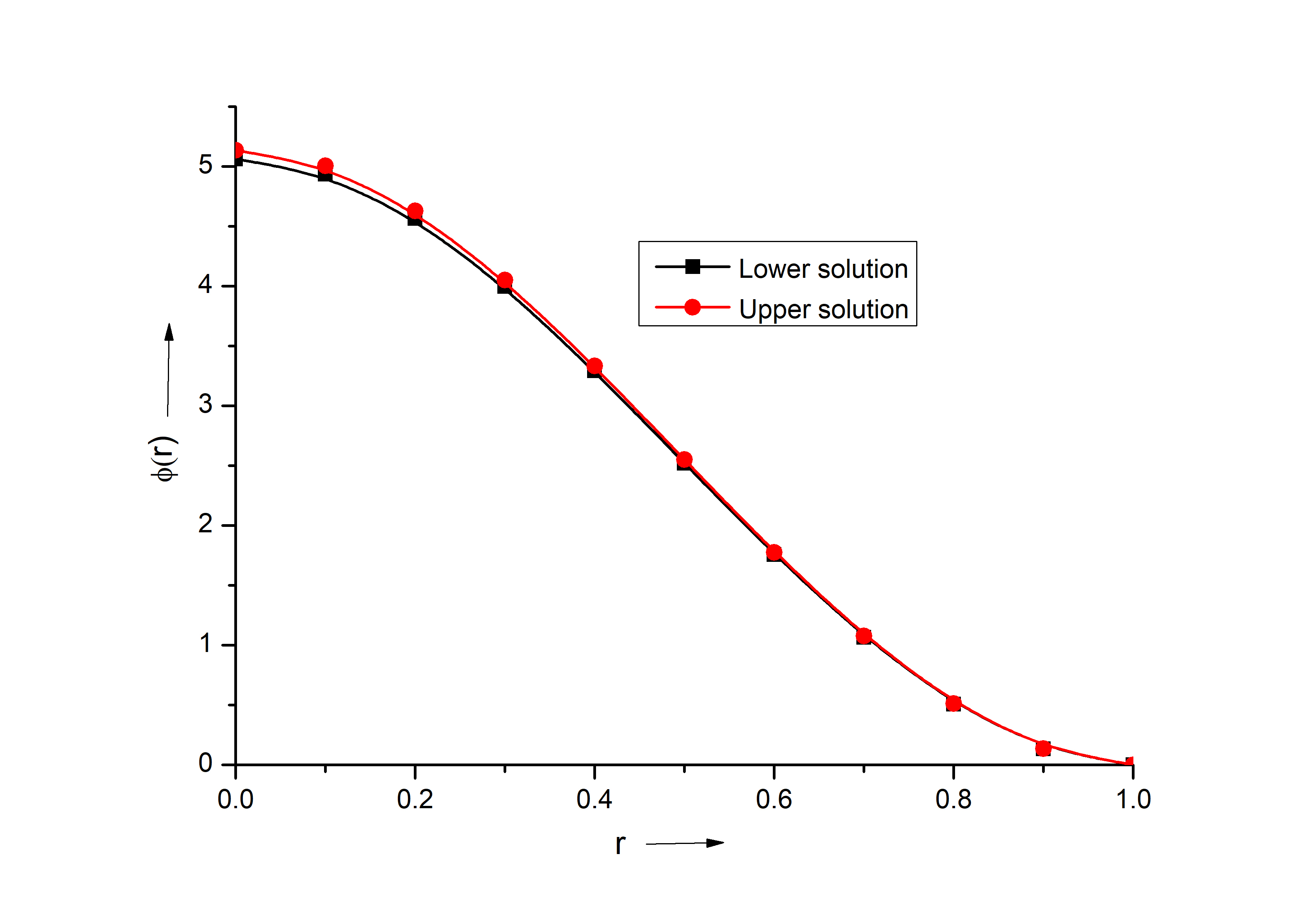}}
\caption{Approximate solutions $\phi(r)$ corresponding to equations (\ref{P5Intro10}) and (\ref{P5Eq205020}).}
\label{P5AG55}
\end{figure}
\begin{figure}[H]
\centering
\subfigure[\,\,$\lambda=-1$]{\includegraphics[width=.5\linewidth]{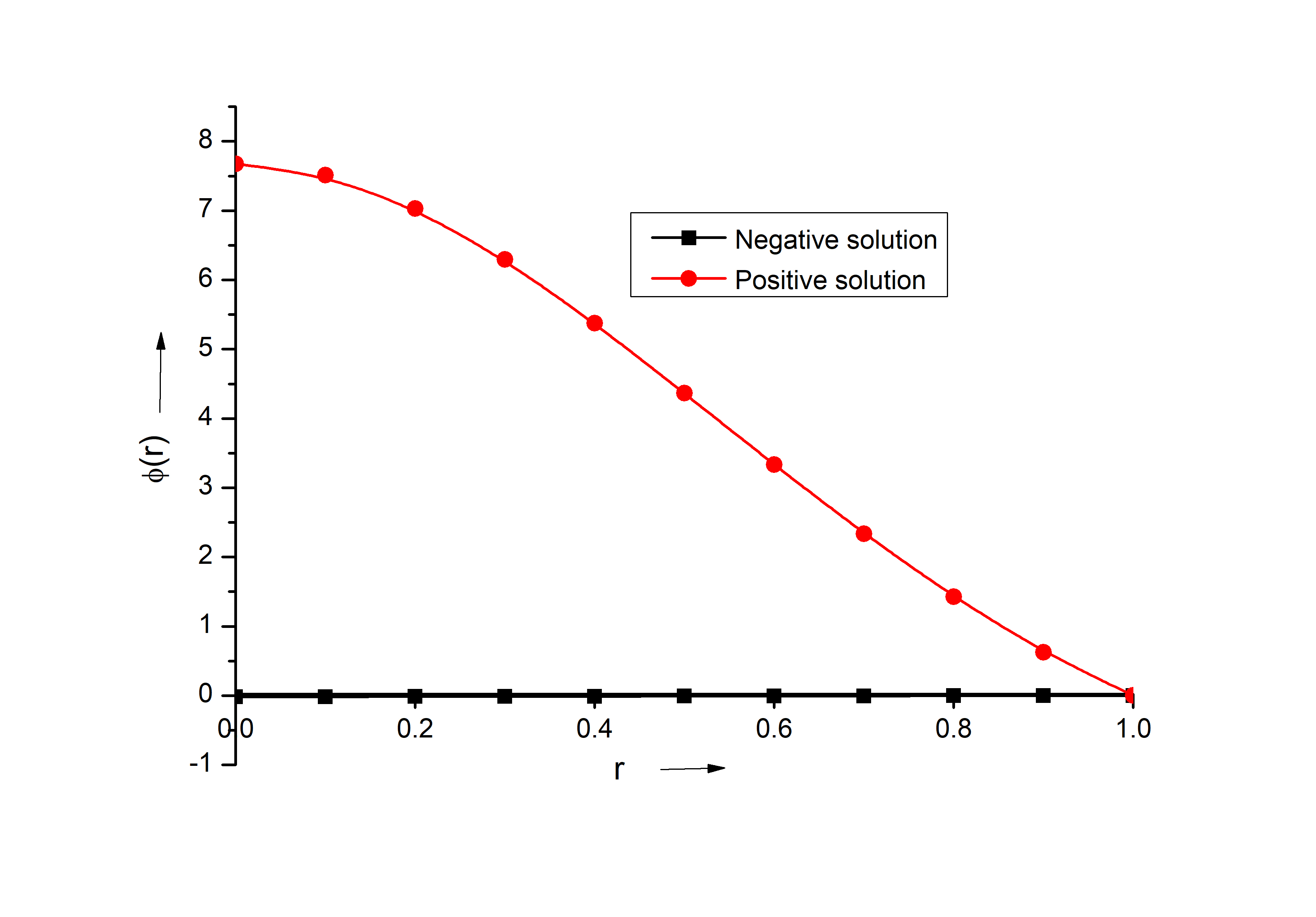}} \hspace{-.6cm}
\subfigure[\,\,$\lambda=-15$]{\includegraphics[width=.5\linewidth]{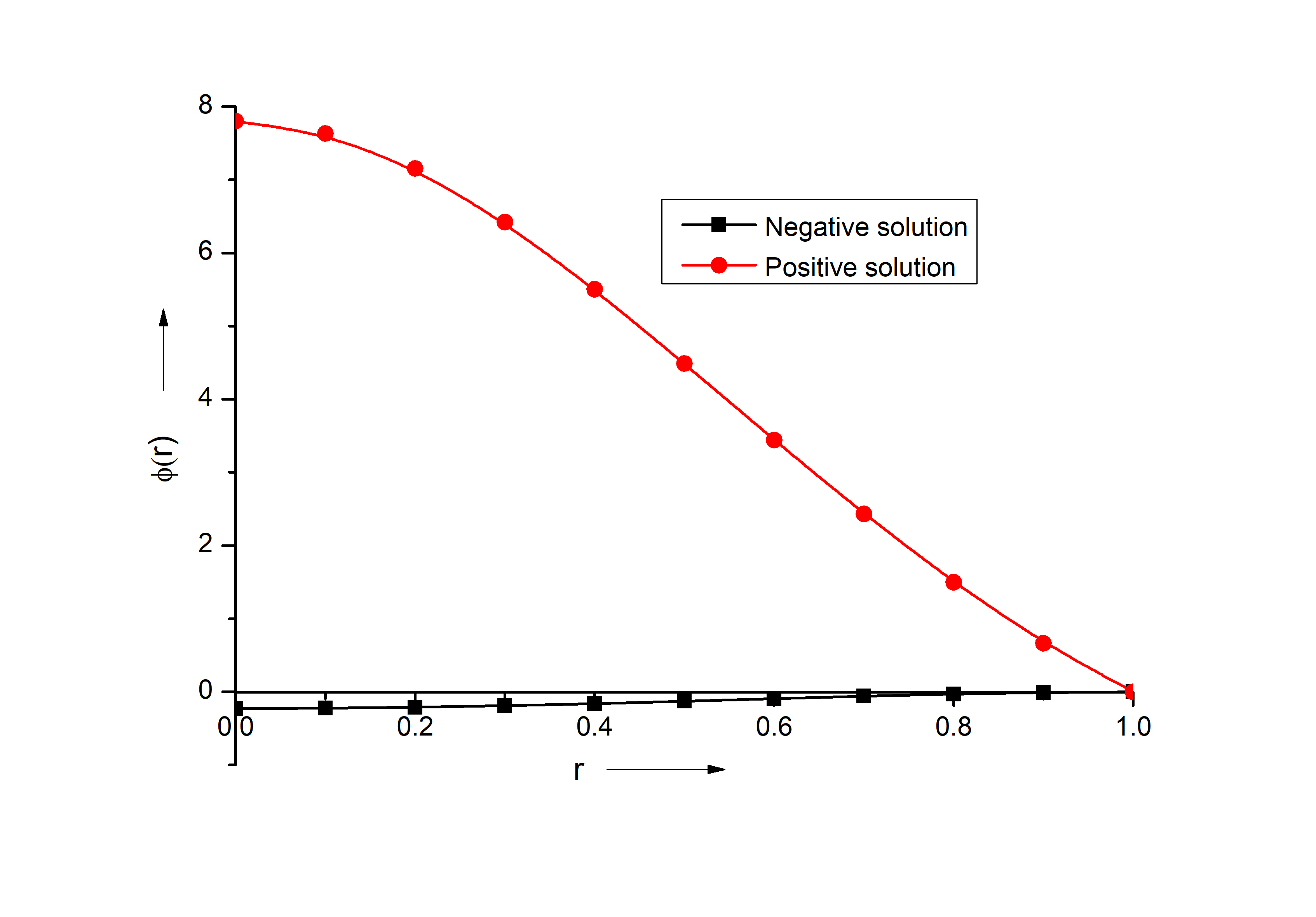}}
\caption{Approximate solutions $\phi(r)$ corresponding equations (\ref{P5Intro10}) and (\ref{P5Eq205020}).}
\label{P5AG66}
\end{figure}
\section{Conclusions}\label{P5Conclusions}
In this work, we derived some qualitative properties of the singular boundary value problems. Also, we  proved the existence of solution and find out a range of parameter $k$ for which the nonlinear problem has multiple solutions in the region $D_0$. We established the bounds of the parameter $\lambda$, from which we concluded about the nonexistence of solutions. All the results can make these problems very interesting and attracting for researchers. Also the boundary value problems have multiple solutions, therefore it is challenging for researchers to get an suitable scheme to capture both solutions with desired acuracy. But, here we  successfully developed  the iterative schemes,  and captured both solutions together with high acuracy. From tables \ref{P5table1}-  \ref{P5table4}, we saw that the approximate solutions computed by our proposed method  converge to the exact solutions very fast. But, corresponding to  boundary conditions (\ref{P5Eq205020}), we noticed that, positive approximate solution converge to exact positive solution very slowly ([See: table \ref{P5table6}]). We verified that our numerical results are well matched with our theoretical results as well as existing numerical results (\cite{Biswajit2019}). Among all point of view, we conclude that, our proposed technique is quit powerful and efficient. Furthermore, this technique will be an effective tool to solve BVPs, which have multiple solutions.

\section*{Acknowledgements}
This work is supported by grant provided by DST project, file name: SB/S4/MS/805/12 and INSPIRE Program Division, Department of Science $\&$ Technology, New Delhi, India-$110016$.
\bibliography{MasterR}
\end{document}